\def\input@path{{figs/}}
\theoremstyle{definition}
\newtheorem{theorem}{Theorem}%[section]
\newtheorem{lemma}{Lemma}
\newtheorem{proposition}{Proposition}
\newtheorem{corollary}{Corollary}
\theoremstyle{definition}
\newtheorem{definition}{Definition}%[section]
\theoremstyle{definition}
\newtheorem*{lemme-4}{Lemma 4}
\providecommand{\keywords}[1]  {\textbf{Keywords:} #1}
\providecommand{\subjclass}[1] {\textbf{Subject classification:} #1}
\let\div\undefined
\DeclareMathOperator{\div}{div} 
\DeclareMathOperator{\Span}{span}
\DeclareMathOperator{\supp}{supp}
\newcommand{\R}{\mathbb{R}}
\renewcommand{\phi}{\varphi}
\renewcommand{\epsilon}{\varepsilon}
\DeclareMathOperator{\cotan}{cotan} 
\DeclareMathOperator{\gT}{\nabla_{\T}} 
\DeclareMathOperator{\tr}{tr} 
\newcommand{\Hdiv}{{\rm H}(\div,\Omega)}
\newcommand{\ld}  {{\rm L}^2(\Omega)} 
\newcommand{\huo} {{\rm H}^1_0(\Omega)} 
\newcommand{\ldd} {\left [ \ld \right]^2} 
\newcommand{\dd} {\, {\rm d}}
\newcommand{\vphi}{\varphi}
\newcommand{\vphis}{\varphi^\star}
\newcommand{\vphia}{\varphi_a}
\newcommand{\vphias}{\varphi_a^\star}
\newcommand{\vphisKi}{\varphi_{K,i}^\star}
\newcommand{\vphisLj}{\varphi_{L,j}^\star}
\newcommand{\vphiaK}{\varphi_{K,a}}
\newcommand{\vphiaL}{\varphi_{L,a}}
\newcommand{\vphiasK}{\varphi_{K,a}^\star}
\newcommand{\vphiasL}{\varphi_{L,a}^\star}
\newcommand  {\un}  {1\hspace{-0.09cm} {\rm l}}
\newcommand  {\ai}  {a\in \T^1_i, ~ \partial^c a = (K,L)}
\newcommand  {\ab}  {a\in \T^1_b, ~ \partial^c a =(K)}
\newcommand  {\T}   {\mathcal{T}}
\newcommand  {\drtb}{ \left( \vphias \right)_{a\in \T^1}}
\newcommand  {\ca}{ ( \vphias,\vphia)_0}
\def\ib#1{_{_{\scriptstyle{#1}}}}
\newcommand  {\di}    {\displaystyle}
\newcommand{\monitem}{ \smallskip \noindent $\bullet$ \quad  } 
\newcommand{\moneq}{\vspace*{-6pt} \begin{equation} \centering \displaystyle } 
  \newcommand{\moneqstar}{\vspace*{-6pt} \begin{equation*} \centering \displaystyle } 
    \newcommand{\monendstar}{\vspace*{-6pt} \end{equation*}   }
  \newcommand{\monend}{\vspace*{-6pt} \end{equation}   }
\title{
  \vspace{-.5cm}
  \bf \Large{
    Raviart-Thomas finite elements 
    \\[7pt] 
    of  Petrov-Galerkin type
  }}
\author[1,2]{Fran\c cois  Dubois \thanks{francois.dubois@u-psud.fr}}
\author[3]{Isabelle Greff \thanks{isabelle.greff@univ-pau.fr}}
\author[3]{Charles Pierre \thanks{charles.pierre@univ-pau.fr}}
\affil[1]{Laboratoire  de Math\'ematiques, Universit\'e  Paris-Sud, Orsay.}
\affil[2]{Conservatoire National des Arts et M\'etiers. 
  LMSSC, Paris}
\affil[3]{
  Laboratoire  de Math\'ematiques et de leurs Applications, 
  UMR CNRS 5142 \protect \\
  Universit\'e de Pau et des Pays de l'Adour, France.}
\begin{document}
\selectlanguage{english}

\date{1 February,  2019. }  

\maketitle
\vspace{.3cm}
\begin{abstract}
  \noindent 
    Finite volume methods are widely used, in particular because they allow an explicit and local computation of a discrete gradient. This computation is only based on the values of a given scalar field. In this contribution, we wish to achieve the same goal in a mixed finite element context of Petrov-Galerkin type so as to ensure a  local computation of the gradient at the
  interfaces of the elements.
  The shape functions are the Raviart-Thomas finite elements. 
  Our purpose
  is to define test functions that are in duality with these shape functions:
  precisely, the shape and test functions will be asked to satisfy 
  some orthogonality property.
This paradigm is addressed for the discrete solution of the Poisson problem.
  The general theory of  Babu\v{s}ka 
  brings  necessary and sufficient stability conditions for a
  Petrov-Galerkin mixed problem   to be   convergent. 
  In order to ensure stability, we propose specific constraints for the  dual test functions.
  With this choice, we prove  that the
  mixed Petrov-Galerkin scheme  is identical to the
  four point finite volume scheme of Herbin,
  and to the  mass lumping approach  developed
  by Baranger, Maitre and Oudin. 
  Convergence is proven with the usual techniques of mixed finite elements.
  %%%% 
  % \\[20pt]
  % %%%% 
  % \centerline{\bf R\'esum\'e}
  % \noindent 
\end{abstract}
\vspace{.3cm}
\noindent
\keywords{inf-sup condition, finite volumes, mixed formulation.}
\\[10pt] \noindent
\subjclass{65N08, % finite volumes
  65N12, % stability  
  65N30. % finite elements
}
\\[10pt] 
% \acknow{
%      The referees have pointed several points that have been incorporated in the present version.
%      The authors thank them for their precise remarks.     
% }

% 
\selectlanguage{french}
\begin{abstract}
  \noindent
    La m\'ethode des volumes finis est largement diffus\'ee, en particulier parce qu'elle permet un calcul local et explicite du gradient discret.
  Ce calcul ne s'effectue qu'\`a partir des valeurs donn\'ees d'un champ scalaire.
  L'objet de cette contribution est d'atteindre un but similaire dans le contexte des \'el\'ements finis mixtes \`a l'aide d'une formulation Petrov-Galerkin qui permet un
  calcul local du gradient aux interfaces des \'el\'ements.
  Il s'agit d'expliciter des fonctions test duales pour l'\'el\'ement fini de Raviart-Thomas~: 
  pr\'ecis\'ement les fonctions de forme et les fonctions test 
  doivent satisfaire une relation d'orthogonalit\'e.
  Ce paradigme est discut\'e pour la r\'esolution discr\`ete de l'\'equation de Poisson.
  La th\'eorie g\'en\'erale de Babu\v{s}ka 
  permet de garantir des conditions de stabilit\'e n\'ecessaires et suffisantes pour qu'un
  probl\`eme mixte  de Petrov-Galerkin
  conduise \`a une approximation convergente. 
  Nous proposons  des contraintes sp\'ecifiques sur 
  les fonctions test duales 
  afin  de garantir la stabilit\'e.  
  Avec ce choix, nous montrons   que le sch\'ema
  mixte de Petrov-Galerkin obtenu est identique au sch\'ema de volumes finis \`a quatre points de Herbin
  et \`a l'approche par condensation de masse d\'evelopp\'ee
  par Baranger, Maitre et Oudin.  
  Nous montrons  enfin la convergence
  avec les m\'ethodes usuelles d'\'el\'ements finis mixtes.
\end{abstract}
\selectlanguage{english}

\section*{Introduction}

Finite volume methods are very popular for the approximation of conservation laws.
The unknowns are mean values of conserved quantities in a given family of cells, 
also named ``control volumes''. 
These mean values are linked together by numerical fluxes. 
The fluxes are defined and computed on interfaces between two control volumes. 
They are  defined with the help of cell values on each side of the interface. 
For hyperbolic problems, the computation of fluxes is obtained  by linear or nonlinear interpolation 
(see {\it e.g.} Godunov {\it et al.} \cite {Godunov79}).

This  paper  addresses  the question of flux computation for second order elliptic problems.
To fix the ideas, we restrict ourselves to the Laplace operator. 
The computation of flux is held by differentiation: the interface flux must be an approximation 
of the normal derivative of the unknown function at the interface between two control volumes. 
The computation of diffusive fluxes using finite difference formulas on the mesh interfaces has been addressed by much research for more than 50 years, as detailed below.
Observe that  for problems involving both advection and
diffusion, the method of Spalding and Patankar \cite{Patankar80} defines
a combination of interpolation for the advective part and derivation for the diffusive part.  

The well known two point flux approximation (see Faille, Gallou\"et and
Herbin \cite{FGH91, Herbin95})
is based on a finite difference formula
applied to two scalar unknowns on each side of the interface.
These unknowns are ordered in the normal direction of the interface 
considering a Voronoi dual mesh of the original mesh, \cite{Voronoi1908}.
When the mesh does not satisfy the Voronoi condition, 
the normal direction of the interface does not coincide with the direction of the 
centres of the cells. The tangential component of the gradient needs
to be introduced. 
We refer to the  ``diamond scheme'' proposed by Noh in \cite{Noh} in 1964 
for triangular meshes and analysed by 
Coudi\`ere, Vila and Villedieu \cite{CVV}. 
The computation of diffusive interface gradients for hexahedral meshes
was studied by 
Kershaw \cite{Kershaw81}, Pert \cite{Pert81} and Faille \cite{Fa92}.
An extension of the finite volume method with
duality between cells and vertices 
has also been proposed by Hermeline \cite{Hermeline} and 
Domelevo and Omnes \cite{Dom-Omnes}. 

The finite volume method has been originally proposed as a numerical method
in engineering
\cite{Patankar80, Rivas82}.
Eymard {\it et  al.} (see {\it e.g.} \cite{gallouet})
proposed a mathematical framework for the analysis of finite volume
methods based on a discrete functional approach.
Even if the method is non consistent
in the sense of finite differences, 
they proved convergence. 
Nevertheless, 
a natural question is the reconstruction of a discrete gradient
from the interface fluxes. 
This question has been first considered for interfaces
with normal direction different to the direction of
the neighbour nodes
by
\cite{Noh, Kershaw81, Pert81, Fa92}. From a 
mathematical point of view, a natural condition is
the existence of the divergence
of the discrete gradient:
how to impose the condition that the discrete gradient 
belongs to the space ${\rm H}(\div)$ ?
If this mathematical condition is satisfied, it is natural to consider
mixed formulations.
After the pioneering work of Fraeijs de Veubeke \cite{FdVeubeke65},
mixed finite elements for two-dimensional space
were introduced
by Raviart and Thomas \cite{RT77} in 1977. They will be denoted as ``RT'' finite elements 
in this contribution.

The discrete gradient built from the RT mixed finite element
is non local.
  Precisely, this discrete gradient for the mixed finite 
element method 
  of a scalar shape function $u$ is defined as the unique $p:=\nabla_h u\in\,$RT
  so that $(p,q)_0=-(u,\div q)_0$ for all $q\in\,$RT.
  With this definition, 
  the flux component of $p$ for a given mesh interface
  cannot be computed locally using only the values of $u$ in the interface neighbourhood.
This is not suitable for the discretisation 
of a differentiation operator that is essentially local. 
In their contribution \cite{BMO96}, Baranger, Maitre and Oudin proposed
a  mass lumping of the RT mass matrix
to overcome this difficulty.
They introduced an appropriate quadrature rule to approximate the exact mass matrix.
With this approach, the interface flux 
is reduced to  a true two-point formula.
Following the  idea in \cite{BMO96}, for general diffusion problems,
  further works have investigated the relationships between local flux expressions and mixed finite element methods.
  Arbogast, Wheeler and Yotov in \cite {arbogast-1997}
  present a variant of the classical mixed finite element method
  (named expanded mixed  finite element).
  They shown that,
  in the case of the lowest order Raviart-Thomas elements on rectangular meshes,
  the approximation of the expanded mixed finite element method
  using a specific quadrature rule leads to  a cell-centered scheme on the scalar unknown. 
  That scheme involves local flux expressions based on finite difference rules.
  The results in \cite{arbogast-1997} were extended by
  Wheeler and Yotov
  in \cite{wheeler-2006}  for the classical  mixed finite element method. 
  The multipoint flux approximation methods propose to evaluate local fluxes with finite difference formula, see \textit{e.g.} Aavatsmark  \cite{aavatsmark-2002}.
  That method has been later shown in Aavatsmark \textit{et al.}  \cite{aavatsmark-2007} to be equivalent on quadrangular meshes with the mixed finite element method with low order elements implemented with a specific quadrature rule.
  Local flux computation using the 
  Raviart-Thomas basis functions has also been developed by Youn\`es \textit{et al.} 
  in \cite{Younes-Chavent-1999}.
  That question has been further investigated by Vohral\'{i}k in \cite{vohralik-2006}. He shows that the mixed finite element discrete gradient
  $p=\nabla_h u$ can be computed locally with the help both of $u$ and of the source term $f_h := -\div(\nabla _h u)$ (that depends on $\nabla _h u$).
More precisely, with a slight modification of the discrete source term $f_h$ in the finite volume method, it has been proven 
in \cite{chavent-2003,vohralik-wohlmuth-2013}
that the two discrete gradients defined either with the mixed finite element or with the finite volume method are identical.
Moreover, in case of a vanishing source term $f=0$, the two discrete gradients are identical without any modification of the discrete source term $f_h$ 
(see \cite{Younes-Chavent-1999}, \cite{chavent-2003}, \cite{younes-2004}).

Our purpose is to build a discrete gradient with a local computation on the mesh interfaces, 
that is conformal in H(div).
Our paradigm is to define this discrete gradient 
only using the scalar field and
without considering the source term.
On the contrary of the previously discussed works
  \cite{BMO96,arbogast-1997,aavatsmark-2007,wheeler-2006}, the expression of that discrete gradient will not be obtained through an approximation of a discrete mixed  problem using quadrature rules. It will be obtained from the variational setting itself.
The main idea is to choose a test function space that is 
L$^2$-orthogonal
with the shape functions, \textit{i.e.} in duality with the Raviart-Thomas space.
With a Petrov-Galerkin approach the spaces of the shape  and test functions are different.
It is now possible to insert  duality between the shape and test functions and then to 
recover a local definition of the discrete gradient, as we proposed previously 
in the one-dimensional case \cite {D00}.
The stability analysis of the mixed finite element method emphasises the 
``inf-sup''  condition \cite{lady69, Ba71, Brezzi74}. 
In his fundamental contribution, Babu{\v{s}}ka \cite{Ba71} gives general inf-sup conditions
for mixed Petrov-Galerkin (introduced in \cite{Petrov}) formulation.
The inf-sup condition guides the construction of the dual space.

In this contribution we extend the Petrov-Galerkin
formulation to two-dimensional space
dimension with Raviart-Thomas shape functions.
In section \ref{section:notations},
we introduce notations and general backgrounds.
The  discrete gradient is presented in section
\ref{section:discgrad}.
Dual  Raviart-Thomas test functions for the  Petrov-Galerkin formulation
of Poisson equation
are proposed in section \ref{section:dualscheme}.
In section \ref{sec:recover-VF4}, we retrieve the four point finite volume scheme
of Herbin \cite{Herbin95}
for a specific choice of the dual test functions.
Section \ref{sec:stab-conv} is devoted to the stability
and convergence analysis in Sobolev spaces with standard finite element methods.
\section{Background and notations}
\label{section:notations}
In the sequel,  $\Omega\subset \R^2$ is an open bounded convex 
with a polygonal boundary.
The spaces $\ld$, $\huo$ and $\Hdiv$ are considered, see \textit{e.g.} \cite{RT-book}.
The  ${\rm L}^2$-scalar products on $\ld$ and on $\ldd$ are similarly denoted  $(\cdot,\cdot)_0$.

\subsubsection*{Meshes}
\label{sec:mesh}
\begin{figure}[!ht]
  \begin{center}
    \begin{picture}(0,0)%
      \includegraphics{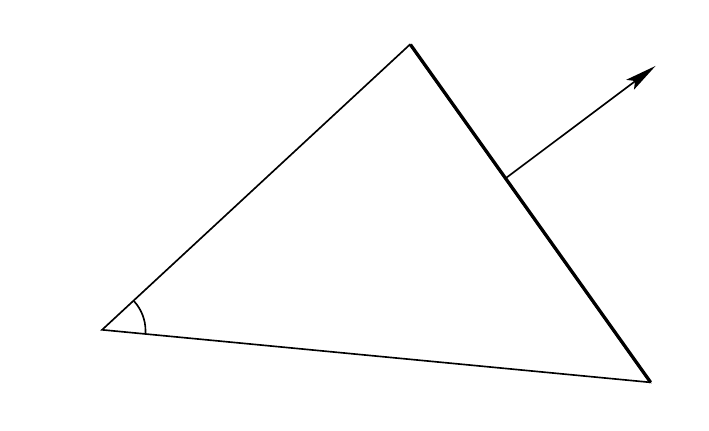}%
    \end{picture}%
    \setlength{\unitlength}{4144sp}%
    \begingroup\makeatletter\ifx\SetFigFont\undefined%
    \gdef\SetFigFont#1#2#3#4#5{%
      \reset@font\fontsize{#1}{#2pt}%
      \fontfamily{#3}\fontseries{#4}\fontshape{#5}%
      \selectfont}%
    \fi\endgroup%
    \begin{picture}(3226,1967)(278,-1228)
      \put(1818,-419){\makebox(0,0)[lb]{\smash{{\SetFigFont{10}{12.0}{\familydefault}{\mddefault}{\updefault}{\color[rgb]{0,0,0}{\large $K$}}%
            }}}}
      \put(1021,-734){\makebox(0,0)[lb]{\smash{{\SetFigFont{10}{12.0}{\familydefault}{\mddefault}{\updefault}{\color[rgb]{0,0,0}$\theta_{K,i}$}%
            }}}}
      \put(3331,434){\makebox(0,0)[lb]{\smash{{\SetFigFont{10}{12.0}{\familydefault}{\mddefault}{\updefault}{\color[rgb]{0,0,0}$n_{K,i}$}%
            }}}}
      \put(451,-1006){\makebox(0,0)[lb]{\smash{{\SetFigFont{10}{12.0}{\familydefault}{\mddefault}{\updefault}{\color[rgb]{0,0,0}$W_{K,i}$}%
            }}}}
      \put(2386,-376){\makebox(0,0)[lb]{\smash{{\SetFigFont{10}{12.0}{\familydefault}{\mddefault}{\updefault}{\color[rgb]{0,0,0}$a_{K,i}$}%
            }}}}
    \end{picture}%
  \end{center}
  \caption{Mesh notations for a triangle $K\in\T^2$}
  \label{fig:cell-1}
\end{figure}
\begin{figure}[!ht]
  \begin{center}
    \begin{picture}(0,0)%
      \includegraphics{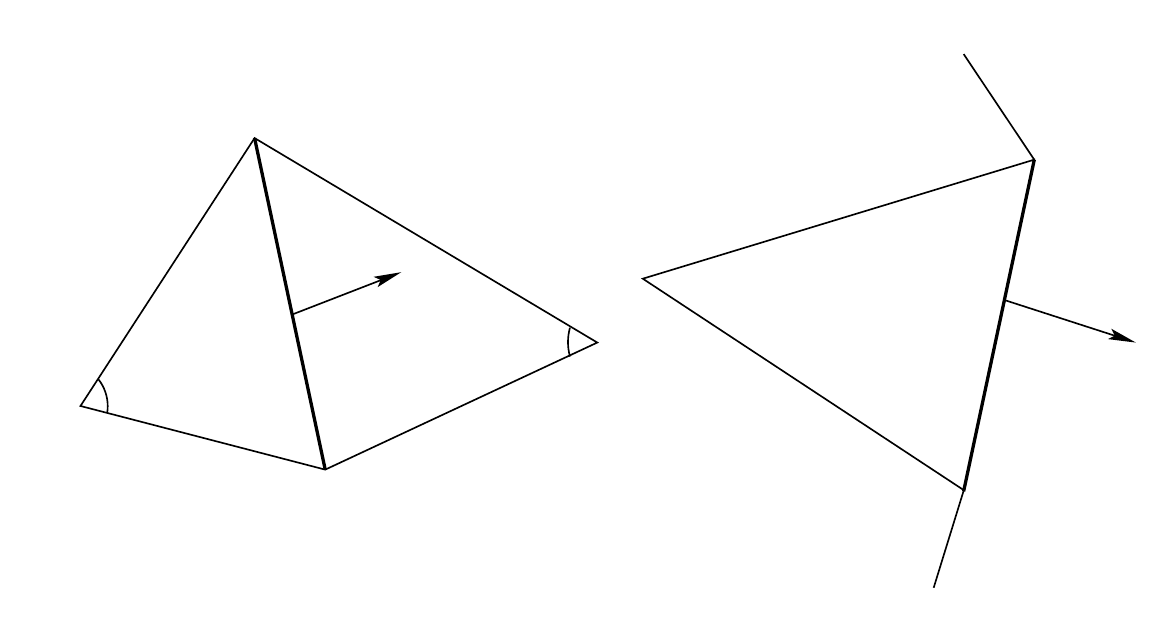}%
    \end{picture}%
    \setlength{\unitlength}{4144sp}%
    \begingroup\makeatletter\ifx\SetFigFont\undefined%
    \gdef\SetFigFont#1#2#3#4#5{%
      \reset@font\fontsize{#1}{#2pt}%
      \fontfamily{#3}\fontseries{#4}\fontshape{#5}%
      \selectfont}%
    \fi\endgroup%
    \begin{picture}(5363,2853)(1988,-1896)
      \put(2504,-869){\makebox(0,0)[lb]{\smash{{\SetFigFont{10}{12.0}{\familydefault}{\mddefault}{\updefault}{\color[rgb]{0,0,0}$\theta_{a,K}$}%
            }}}}
      \put(3839,-327){\makebox(0,0)[lb]{\smash{{\SetFigFont{10}{12.0}{\familydefault}{\mddefault}{\updefault}{\color[rgb]{0,0,0}$n_a$}%
            }}}}
      \put(2866,-667){\makebox(0,0)[lb]{\smash{{\SetFigFont{10}{12.0}{\familydefault}{\mddefault}{\updefault}{\color[rgb]{0,0,0}$K$}%
            }}}}
      \put(3800,-685){\makebox(0,0)[lb]{\smash{{\SetFigFont{10}{12.0}{\familydefault}{\mddefault}{\updefault}{\color[rgb]{0,0,0}$L$}%
            }}}}
      \put(3101,-186){\makebox(0,0)[lb]{\smash{{\SetFigFont{10}{12.0}{\familydefault}{\mddefault}{\updefault}{\color[rgb]{0,0,0}{\large $a$ }}%
            }}}}
      \put(6285,-799){\makebox(0,0)[lb]{\smash{{\SetFigFont{10}{12.0}{\familydefault}{\mddefault}{\updefault}{\color[rgb]{0,0,0}{\large $a$}}%
            }}}}
      \put(7032,-412){\makebox(0,0)[lb]{\smash{{\SetFigFont{10}{12.0}{\familydefault}{\mddefault}{\updefault}{\color[rgb]{0,0,0}$n_a$}%
            }}}}
      \put(5664,-476){\makebox(0,0)[lb]{\smash{{\SetFigFont{10}{12.0}{\familydefault}{\mddefault}{\updefault}{\color[rgb]{0,0,0}$K$}%
            }}}}
      \put(3101,394){\makebox(0,0)[lb]{\smash{{\SetFigFont{10}{12.0}{\familydefault}{\mddefault}{\updefault}{\color[rgb]{0,0,0}$N_a$}%
            }}}}
      \put(3499,-1412){\makebox(0,0)[lb]{\smash{{\SetFigFont{10}{12.0}{\familydefault}{\mddefault}{\updefault}{\color[rgb]{0,0,0}$S_a$}%
            }}}}
      \put(4843,-186){\makebox(0,0)[lb]{\smash{{\SetFigFont{10}{12.0}{\familydefault}{\mddefault}{\updefault}{\color[rgb]{0,0,0}$W_a$}%
            }}}}
      \put(4743,-799){\makebox(0,0)[lb]{\smash{{\SetFigFont{10}{12.0}{\familydefault}{\mddefault}{\updefault}{\color[rgb]{0,0,0}$E_a$}%
            }}}}
      \put(6460,652){\makebox(0,0)[lb]{\smash{{\SetFigFont{10}{12.0}{\familydefault}{\mddefault}{\updefault}{\color[rgb]{0,0,0}$\partial \Omega$}%
            }}}}
      \put(2161,-1636){\makebox(0,0)[lb]{\smash{{\SetFigFont{10}{12.0}{\familydefault}{\mddefault}{\updefault}{\color[rgb]{0,0,0} $\ai$}%
            }}}}
      \put(2206,-1051){\makebox(0,0)[lb]{\smash{{\SetFigFont{10}{12.0}{\familydefault}{\mddefault}{\updefault}{\color[rgb]{0,0,0}$W_a$}%
            }}}}
      \put(4321,-646){\makebox(0,0)[lb]{\smash{{\SetFigFont{10}{12.0}{\familydefault}{\mddefault}{\updefault}{\color[rgb]{0,0,0}$\theta_{a,L}$}%
            }}}}
      \put(4861,-1636){\makebox(0,0)[lb]{\smash{{\SetFigFont{10}{12.0}{\familydefault}{\mddefault}{\updefault}{\color[rgb]{0,0,0} $\ab$}%
            }}}}
    \end{picture}%
  \end{center}
  \caption{Mesh notations for an internal edge (left) and for a boundary edge (right)}
  \label{fig:edge}
\end{figure}
A conformal triangle mesh $\T$ of $\Omega$ is considered, in the sense of 
Ciarlet in \cite{Ciarlet}.
The angle, vertex, edge and triangle sets of $\T$ are respectively denoted 
{$\T^{-1}$,}
$\T^{0}$, $\T^1$ and $\T^2$.  
The area of  $K\in\T^2$ and the length of $a\in\T^1$
are denoted $\vert  K\vert $ and   $\vert  a\vert $.

Let $K\in\T^2$.
Its three edges, vertexes and angles  are respectively denoted $a_{K,i}$, $W_{K,i}$ and  $\theta_{K,i}$, (for $1 \leq i \leq 3$)
in such a way that $W_{K,i}$ and  $\theta_{K,i}$ are opposite to $a_{K,i}$
(see figure \ref{fig:cell-1}). 
The unit normal to $a_{K,i}$ pointing outwards $K$ is denoted $n_{K,i}$.
The local scalar products on $K$ are introduced as,
for $f_i\in\ld$ or  $p_i\in\ldd$:
\begin{displaymath}
  (f_1,f_2)_{0,K} = \int_K f_1f_2 \dd x \quad {\rm or} \quad 
  (p_1,p_2)_{0,K} = \int_K p_1\cdot p_2 \dd x\,.
\end{displaymath}

Let $a\in\T^1$.
One of its two unit normal is chosen and denoted $n_a$. This sets  an orientation for $a$.
Let $S_a,~ N_a$ be the two vertexes of $a$, ordered so that $(n_a, S_aN_a)$ has a direct orientation.
% \\
The sets $\T^1_i$ and $\T^1_b$ of the internal and boundary edges respectively are defined as,
\begin{displaymath}
  % \label{eq:internal-boundary-edges}
  \T^1_b = \left\{ a\in\T^1, \quad  a\subset \partial\Omega\right\},\quad 
  \T^1_i = \T^1 \backslash \T^1_b.
\end{displaymath}
Let $a\in\T^1_i$. Its coboundary $\partial^c a$ is made of the unique ordered pair $K$, $L\in\T^2$ so that $a\subset  \partial K \cap \partial L$ and so that $n_a$ points from $K$ towards $L$. In such a case the following notation will be used:
\begin{displaymath}
  % \label{eq:internal-edge}
  \ai
\end{displaymath}
and we will denote $W_a$ (\textit{resp.} $E_a$) the vertex of K (\textit{resp.} $L$) 
opposite to $a$ (see figure \ref{fig:edge}).
\\
Let $a\in\T^1_b$: $n_a$ is assumed to point towards the outside of $\Omega$. Its coboundary is made of a single $K\in\T^2$ so that $a\subset \partial K$, which situation is denoted as follows:
\begin{displaymath}
  % \label{eq:boundary-edge}
  \ab
\end{displaymath}
and we will denote $W_a$ the vertex of K opposite to $a$.
If $a\in\T^1$ is an edge of $K\in \T^2$, the angle of $K$ opposite to $a$ is denoted $\theta_{a,K}$.

\subsubsection*{Finite element spaces}
\label{sec:fem}
Relatively to a mesh $\T$ are defined the spaces $P^0$ and $RT$.
The space of piecewise constant functions on the mesh is denoted
by $P^0$ subspace of $\ld$.
The classical basis of $P^0$ is made of the indicators $\un_K$ for $K\in\T^2$.
To $u\in P^0$ is associated the vector $(u_K)_{K\in\T^2}$ so that
$   u = \sum_{K\in \T^2} u_K \, \un_K $.
The space of Raviart-Thomas of order 0  introduced in \cite{RT77} is denoted by $RT$ and is a subspace of $\Hdiv$.
It is recalled that $p \in RT$ if and only if $p \in \Hdiv$ and for all
$K\in \T^2$, 
$    p(x) = \alpha_K + \beta_K x$, for $x\in K$,
where $\alpha_K\in\R^2$ and $\beta_K\in \R$ are two constants.
An element $p\in RT$ is uniquely determined by its fluxes  
$ \, p_a := \int_a p\cdot n_a ds$ for $a\in\T^1$. 
The 
classical 
basis $\{\vphi_a, \, a \in \T^1\}$    
of $RT$ is so that 
$ \, \int_b \vphi_a\cdot n_b \dd s = \delta_{ab} \, $ 
for  all $b\in\T^1$ and 
with $\delta_{ab}$ the Kronecker symbol.
Then to $p\in RT$ is associated its flux vector 
$(p_a)_{a\in\T^1}$ so that,
$
p = \sum_{a\in\T^1} p_a \vphi_a
$.
\\
The \textit{local Raviart-Thomas basis functions} are defined, for $K\in\T^2$ and $i=1$, 2, 3, by:
\begin{equation}
  \label{eq:phi_K-i}
  \vphi_{K,i}(x) = \dfrac{1}{4\vert  K\vert } \nabla \vert x-W_{K,i}\vert ^2
  \quad {\rm on}\quad  K
  \quad  {\rm and} \quad  \vphi_{K,i} = 0 \quad {\rm otherwise}.
\end{equation}
With that definition:
\begin{equation}
  \label{eq:loc-basis-RT}
  \begin{array}{lll}
    \vphi_a = \vphi_{K,i} - \vphi_{L,j} \quad  &\text{if} \quad \ai
    \quad  &\text{and}\quad a=a_{K,i}=a_{L,j}
    \\[5pt]
    \vphi_a = \vphi_{K,i}\quad  &\text{if} \quad \ab
    \quad  &\text{and}\quad a=a_{K,i}
  \end{array}.
\end{equation}
The support of the RT basis functions is
$\supp(\vphi_a) =  K \cup L$ if $\ai$ or $\supp(\vphi_a) =  K$ in case $\ab$.
This provides a  second way to decompose $p\in RT$ as,
\begin{displaymath}
   p = \sum_{K\in\T^2} \sum_{i=1}^3 p_{K,i} \,\, \vphi_{K,i},
\end{displaymath}
where $p_{K,i} = \varepsilon p_a$ if $a=a_{K,i}$ with $\varepsilon = n_a \cdot n_{K,i} = \pm 1$.
For simplicity
we will denote $\vphiaK = \vphi_{K,i}$ for $a\in\T^1$ such that $a\subset \partial K$ and $a=a_{K,i}$.
The divergence operator $\div:~ RT \rightarrow P^0$ is given by,
\begin{equation}
  \label{eq:RT-div}
  \div p = \sum_{K\in\T^2} \left(\div p\right)_K \un_K,
  \qquad  
  \left(\div p\right)_K= \dfrac{1}{| K|} \sum_{1=1}^3 p_{K,i}.
\end{equation}

\section{Discrete gradient}
\label{section:discgrad}
The two unbounded operators,
$  \nabla:~  \ld \supset  \huo \rightarrow \ldd$ and     
$  \div:~  \ldd  \supset $ $ \Hdiv  \rightarrow  \ld$    
together satisfy the Green formula: for $u\in\huo$ and $p\in\Hdiv$:
$
(\nabla  u, p)_0 = -(u, \div p)_0
$.
% \\
Identifying $\ld$ and $\ldd$ with their topological dual spaces using the ${\rm L}^2$-scalar 
product yields the following property,
\begin{displaymath}
  \nabla  = -\div ^\star,
\end{displaymath}
that is a weak definition of the gradient on $\huo$.
\\
Consider a mesh of the domain and the associated spaces $P^0$ and $RT$ as defined in section \ref{section:notations}.
We want  to define a \textit{discrete gradient}: $\gT:~ P^0\rightarrow  RT$,
based on a similar weak formulation. 
Starting from the divergence operator 
$
\div:~  RT \rightarrow P^0,
$
one can 
define $
\div^\star:~  \left(P^0\right)' \rightarrow \left(RT\right)', 
$
between the algebraic dual spaces of $P^0$ and $RT$.
The classical basis for $P^0$ is orthogonal for the ${\rm L}^2$-scalar product.
Thus, $P^0$ is identified with its algebraic dual $\left(P^0\right)'$.   
On the contrary, the Raviart-Thomas basis 
$\left\{\vphi_a, \,  a \in \T^1\right\}$ of $RT$ 
is not orthogonal.
For this reason, a general identification process of $\left(RT\right)'$  
to a space 
$RT^\star=\Span\left( \vphi_a^\star, \quad  a\in\T^1 \right)$
is studied.
We want it to satisfy,
\begin{equation}
  \label{eq:RT*-0}
  \vphi_a^\star \in \Hdiv, \quad 
  (\vphi_a^\star,\vphi_a)_0 \neq 0,
\end{equation}
so that $RT^\star \subset \Hdiv$, together with
the orthogonality property,
\begin{equation}
  \label{eq:RT*}
  (\vphi_a^\star,\vphi_b)_0 = 0 \quad {\rm for} \quad a,b\in\T^1, \quad a\neq b.
\end{equation}
The discrete gradient is defined with the diagram,
\begin{equation}
  \label{eq:grad-discret}
  \begin{CD} 
    RT         & @>{\div}>>    & P^0    
    \\
    @ V{\Pi}VV & \hfill        & @VV{id}V
    \\
    RT^\star   & @ <<{\div^\star}< & P^0
  \end{CD}
  \quad, \qquad \gT = -\Pi^{-1}\circ \div^\star~~:~P^0\rightarrow RT,
\end{equation}
where $\Pi:~ RT\rightarrow  RT^\star$
is the projection defined
by $\Pi \vphi_a = \vphi_a^\star$ for any $a\in \T^1$.
\\

\noindent
Various choices for $RT^\star$ are possible.
The first choice is to set $RT^\star=RT$, and therefore to build 
$\left\{ \vphi_a^\star, \,  a\in\T^1 \right\}$   
with a Gram-Schmidt orthogonalisation process on the Raviart-Thomas basis.
Such a choice has an important drawback. The  dual base function $\vphi_a^\star$ does not conserve 
a support located around the edge $a$. The discrete gradient matrix will be a full matrix related 
with the Raviart-Thomas mass matrix inverse. 
This is not relevant 
with the definition of the original gradient operator that is local in space.
This choice corresponds to the classical mixed finite element discrete gradient 
that is known to be associated with a full matrix \cite{RT77}. 
In order to overcome  this problem, Baranger, Maitre and Oudin \cite{BMO96} have proposed to 
lump the mass matrix of the mixed finite element method. 
They obtain a discrete {\it local} gradient. 
%%%%%%%%%%% fin ajout fd 01 oct 2016 
Other methods have been proposed by Thomas-Trujillo \cite{TT1,TT2}, by
Noh \cite{Noh}, and analysed by Coudi\`ere, Vila and  Villedieu \cite{CVV}.
Another approach is to add unknowns at the vertices, as developed by  
Hermeline \cite{Hermeline} and  Domelevo-Omnes \cite{Dom-Omnes}. \\
% \item [--]
A second choice, initially proposed by Dubois and co-workers  \cite{D00, D02, BDLPT05, Du10},  
is  investigated in this paper. The goal is to search for  a dual basis satisfying equation \eqref{eq:RT*-0} and
in addition to the orthogonality property \eqref{eq:RT*}, the localisation constraint,
\begin{equation}
  \label{eq:RT*-2}
  \forall ~a\in\T^1,\quad 
  \supp(\vphi^\star_a) \subset  \supp (\vphi_a),
\end{equation}
in order to impose locality to the discrete gradient. 
We observe that due to the $ \, {H(\div)} $-conformity, 
we have continuity of the normal component on  the boundary of the co-boundary of the edge $ \,a $:
\begin{equation}
  \vphias\cdot n_b=0  \quad {\text  if} \quad   a \neq b \in \T^1 . 
\end{equation}
With such a constraint \eqref{eq:RT*-2}  the discrete gradient of $u\in P^0$  will be defined on each edge $a\in \T^1$ only from the two values of $u$ on each side of $a$ (as detailed in proposition \ref{prop:disc-grad-prop-1}).
In this context it is no longer asked to have $\vphi^\star_a \in RT$ so that 
$RT \neq RT^\star$: 
thus, this is a Petrov-Galerkin discrete formalism.

%%%%%%%%%%%%%%%%%%%%%%%%%%%%%%%%%%%%%%%%%%%%%%%%%%%%%%%% section 2 
\section{Raviart-Thomas dual basis }
\label{section:dualscheme}
\begin{definition} 
  \label{def:RT}
  $\drtb$ is said to be a Raviart-Thomas dual basis if it satisfies (\ref{eq:RT*-0}), the orthogonality condition \eqref{eq:RT*}, the localisation condition \eqref{eq:RT*-2} and the following \textit{flux normalisation} condition:
  \begin{equation}
    \label{eq:RT*-3}
    \forall ~a,~b\in\T^1, \quad 
    \int_b \vphi_a^\star \cdot n_b  \dd s = \delta_{ab}, 
  \end{equation}
  as for the Raviart-Thomas basis functions $\vphia$, see section \ref{section:notations}.
  \\
  In such a case, $RT^\star = \Span(\vphi_a^\star, \, a\in\T^1)$  %% modif fd 01 oct 2016 
  is the associated Raviart-Thomas dual space, $\Pi:~\vphi_a\in RT \mapsto  \vphi_a^\star \in RT^\star$ the projection onto $RT^\star$  and $\gT=-\Pi^{-1}\div^\star:~P^0\rightarrow RT$ the associated discrete gradient, as described in diagram \eqref{eq:grad-discret}.
\end{definition}
\subsection{Computation of the discrete gradient} %Discrete gradient formulation}
\begin{proposition} % [Computation of the discrete gradient] 
  \label{prop:disc-grad-prop-1}
  Let $\drtb$ be a Raviart-Thomas dual basis.
  The discrete gradient is given 
  for $u\in P^0$, by the
  relation $\gT u = \sum_{a\in \T^1} p_a \vphi_a $ with,
  \begin{equation}
    \label{eq:disc-grad-1}
    \begin{array}{cc}
      \text{if} \quad \ai, \quad &p_a = \dfrac{u_L- u_K}{(\vphi_a,\vphi_a^\star)_0}
      \\[15pt] 
      \text{if} \quad \ab, \quad &p_a = \dfrac{- u_K}{(\vphi_a,\vphi_a^\star)_0}    
    \end{array}
    .
  \end{equation}
\end{proposition}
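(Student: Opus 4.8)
The plan is to unwind the diagram \eqref{eq:grad-discret} explicitly and then reduce everything to a single flux computation. First I would make the two identifications in \eqref{eq:grad-discret} precise. The space $P^0$ is identified with $(P^0)'$ through the $L^2$-product (its canonical basis being orthogonal), so an element $u\in P^0$ is viewed, after composition with $\div$, as the functional $p\mapsto (u,\div p)_0$ on $RT$. The space $(RT)'$ is identified with $RT^\star$ again through the $L^2$-pairing: to a functional $\ell\in (RT)'$ one associates the unique $w\in RT^\star$ with $(w,p)_0=\ell(p)$ for every $p\in RT$. This identification is legitimate precisely because the ``mixed mass matrix'' $M_{ab}=(\vphi_a^\star,\vphi_b)_0$ is invertible; by the orthogonality property \eqref{eq:RT*} together with $(\vphi_a^\star,\vphi_a)_0\neq 0$ from \eqref{eq:RT*-0}, this matrix is in fact \emph{diagonal} with nonzero entries. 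This diagonal structure is the whole point and is what will produce the local, edge-by-edge formula.

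Second, writing $\div^\star u=\sum_{a}w_a\,\vphi_a^\star\in RT^\star$ and testing against $\vphi_b$, the orthogonality \eqref{eq:RT*} collapses the sum to a single term, giving $w_b\,(\vphi_b^\star,\vphi_b)_0=(u,\div\vphi_b)_0$, hence $w_a=(u,\div\vphi_a)_0/(\vphi_a,\vphi_a^\star)_0$. Applying $\Pi^{-1}$, which sends $\vphi_a^\star$ back to $\vphi_a$ by definition of $\Pi$, and carrying the minus sign from \eqref{eq:grad-discret}, yields $\gT u=\sum_a p_a\vphi_a$ with $p_a=-(u,\div\vphi_a)_0/(\vphi_a,\vphi_a^\star)_0$. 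It then only remains to evaluate $(u,\div\vphi_a)_0$.

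Third, I would compute this scalar product by Green's formula. Since $\div\vphi_a$ is piecewise constant and supported on $\supp(\vphi_a)$, only the one or two triangles of the coboundary of $a$ contribute. On a triangle $M$ having $a$ among its edges, $\int_M\div\vphi_a\,\dx=\int_{\partial M}\vphi_a\cdot n_M\,\ds$ reduces, by the flux normalisation $\int_b\vphi_a\cdot n_b\,\ds=\delta_{ab}$, to the signed flux through $a$ alone, i.e.\ to $n_a\cdot n_{M,i}=\pm1$. With the orientation convention that $n_a$ points from $K$ to $L$ (resp.\ outwards of $\Omega$ on the boundary), this gives $\int_K\div\vphi_a\,\dx=1$ and $\int_L\div\vphi_a\,\dx=-1$ when $\ai$, and $\int_K\div\vphi_a\,\dx=1$ when $\ab$. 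Hence $(u,\div\vphi_a)_0=u_K-u_L$ in the internal case and $(u,\div\vphi_a)_0=u_K$ in the boundary case, and substituting into the formula for $p_a$ yields exactly \eqref{eq:disc-grad-1}.

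The routine part is clear; the step demanding the most care is the identification of $(RT)'$ with $RT^\star$ --- checking that the $L^2$-pairing between the two spaces is nondegenerate, which rests entirely on \eqref{eq:RT*-0} and \eqref{eq:RT*} --- together with the accompanying orientation bookkeeping that fixes the signs of $\int_M\div\vphi_a\,\dx$ and thereby the sign of $p_a$.
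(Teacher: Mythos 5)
Your proof is correct, and it shares the same skeleton as the paper's --- unwind the diagram \eqref{eq:grad-discret}, use the orthogonality \eqref{eq:RT*} to diagonalise the $L^2$-pairing between $RT$ and $RT^\star$, then evaluate $(u,\div\vphi_a)_0$ on the coboundary of $a$ --- but your route is leaner in one substantive way. You expand $\div^\star u$ directly in the dual basis, so you only ever invoke \eqref{eq:RT*-0} and \eqref{eq:RT*} (which make the mixed mass matrix $(\vphias,\vphi_b)_0$ diagonal and invertible) together with the flux property of the \emph{primal} basis functions $\vphi_a$; you never use the dual flux normalisation \eqref{eq:RT*-3} or the localisation \eqref{eq:RT*-2}, so your argument in fact proves the formula for any dual system satisfying \eqref{eq:RT*-0} and \eqref{eq:RT*} alone. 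The paper takes a detour: it first proves \eqref{eq:RT*-3.2}, which does rest on the dual flux normalisation \eqref{eq:RT*-3}, and then the variational characterisation \eqref{eq:RT*-1-3.1}, namely $(\gT u,q)_0=-(u,\div q)_0$ for $q\in RT^\star$; its final step tests $\gT u$ against $\vphias$, which produces $-(u,\div\vphias)_0$ and therefore needs \eqref{eq:RT*-3.2} once more to trade $\div\vphias$ for $\div\vphia$. What that detour buys is reuse: \eqref{eq:RT*-1-3.1} and \eqref{eq:RT*-3.2} are precisely the identities invoked again in the proof of proposition \ref{prop:PG-disc} (equivalence with the finite volume problem and well-posedness), so the paper packages them as standing facts, whereas your proof keeps everything inside this one proposition. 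Your evaluation of $\int_K \div\vphi_a \dx = \pm 1$ by the divergence theorem and the primal flux normalisation is a correct, self-contained substitute for the paper's explicit formula \eqref{eq:div-phi_a}, and your orientation bookkeeping (the signs $n_a\cdot n_{K,i}$, hence the sign of $p_a$) is right.
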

The formulation of the discrete gradient only depends on the coefficients $(\vphi_a^\star,\vphi_a)_0$.
The discretisation of the Poisson equation (see  the next subsection)
also only depends on these coefficients.
\\
The result of the localisation condition (\ref{eq:RT*-2}) is, as expected, a local discrete gradient: its value on an edge $a\in \T^1$ only depends on the values of the scalar function $u$ on each sides of $a$.
\\
The discrete gradient on the external edges expresses a  
homogeneous Dirichlet boundary condition.
At the continuous level, the gradient defined on the domain $\huo$
is the adjoint of the divergence operator on the domain  $\Hdiv$.
That property is implicitly recovered at the discrete level.
This is consistent since the discrete gradient is the adjoint of the divergence on the domain $RT$.
\begin{proof}
  
  Condition \eqref{eq:RT*-3} leads to
  $\di
  \int_b \vphi_a^\star \cdot n_b  \dd s =  \int_b \vphi_a \cdot n_b  \dd s$,
  for any $ a,\,b \in \T^1$.
  Then the divergence theorem
  implies
  that
  \begin{displaymath}
    % \label{eq:RT*-3.1}
    \forall ~p\in RT,\quad \forall ~K\in\T^2,\quad 
    \int_K \div p  \dd x = \int_K \div( \Pi p)  \dd x\,,
  \end{displaymath}
  and so proves
  \begin{equation}  
    \label{eq:RT*-3.2}
    \forall ~(u,p)\in P^0\times  RT,\quad 
    (\div p, u)_0 = (\div (\Pi p), u)_0.
  \end{equation}
  Let us %secondly
  prove that,
  \begin{equation}
    \label{eq:RT*-1-3.1}
    \forall ~u\in P^0,\quad \forall ~q\in RT^\star,\quad 
    ( \gT u, q)_0 = - (u, \div q)_0 .
  \end{equation}
  From property (\ref{eq:RT*}) one can check that, 
  \begin{displaymath}
    % \label{eq:RT*.1}
    \forall ~q_1,~q_2\in RT^{\star},\quad 
    (\Pi^{-1} q_1,q_2)_0 = ( q_1,\Pi^{-1} q_2)_0\,.
  \end{displaymath}
  Now consider $u\in P^0$ and  $q\in RT^\star$. We have with (\ref{eq:RT*-3.2}),
  \begin{displaymath}
    (u, \div q)_0 = (u, \div (\Pi^{-1} q))_0 = (\div^\star u, \Pi^{-1} q)_0
    = (\Pi^{-1}\bigl(\div^\star u\bigl),  q)_0,
  \end{displaymath}
  which gives \eqref{eq:RT*-1-3.1} by definition of the discrete gradient.
  \\ \indent
  We can now prove \eqref{eq:disc-grad-1}.
  Let $u\in P^0$ and $p=\gT u\in RT$ that we decompose as  $\gT u = \di\sum_{a\in\T^1} p_a \vphi_a$.
  For any $a\in \T^1$, with (\ref{eq:RT*}),
  \begin{equation*}
    \bigl( \gT u, \vphi_a^\star \bigl)_0 = p_a \bigl( \vphi_a, \vphi_a^\star \bigl)_0,
  \end{equation*}
  and meanwhile with equation \eqref{eq:RT*-1-3.1} and \eqref{eq:RT*-3.2} successively,
  \begin{displaymath}
    \bigl( \gT u, \vphi_a^\star \bigl)_0 =
    -\bigl( u, \div \vphi_a^\star \bigl)_0=
    -\bigl( u, \div \vphi_a \bigl)_0\,.
  \end{displaymath}
  Finally, 
  $\div \vphi_a$ is explicitly given by,
  \begin{align}
    \label{eq:div-phi_a}
    \text{if} \quad \ai&: \quad 
    \div \vphi_a =\dfrac{1}{|K|} \un_K - \dfrac{1}{|L|}\un_L,
    \\ \notag
    \text{if} \quad \ab&: \quad 
    \div \vphi_a = \dfrac{1}{|K|}\un_K.
  \end{align}
  This yields relations (\ref{eq:disc-grad-1}).
\end{proof}

\subsection{Petrov-Galerkin discretisation of the Poisson problem}
\label{sec:PG-disc}
Consider the following Poisson problem on $\Omega$,
\begin{equation}
  \label{eq:laplace}
  - \Delta u = f \in \ld,\quad u=0 \quad {\rm on} \quad \partial\Omega.
\end{equation}
Consider a mesh $\T$ and  a Raviart-Thomas dual basis $\drtb$
as in definition \ref{def:RT} leading to the space $RT^\star$.
Let us denote $V= P^0 \times RT$ and $V^\star= P^0 \times RT^\star$.
The  mixed
Petrov-Galerkin discretisation of equation \eqref{eq:laplace} is:
find $(u,p)\in V$ so that,
\begin{equation}
  \label{eq:PG-laplace}
  \forall ~ (v,q)\in V^\star,\quad 
  (p,q)_0 + (u,\div q)_0 = 0 
  \quad  \text{and} \quad 
  -(\div p, v)_0         = (f,v)_0 .
\end{equation}
The mixed  Petrov-Galerkin discrete problem \eqref{eq:PG-laplace} reformulates as:
find $(u,p)\in V$ so that,
\begin{displaymath}
  \forall ~ (v,q) \in V^\star, \quad 
  Z\bigl((u,p),(v,q)\bigl)=-(f,v)_0.
\end{displaymath}
where the bilinear  form $Z$ 
is defined for 
$(u,p)\in V$ and  $ (v,q)\in  V^\star$ by,
\begin{equation}
  \label{eqdef-Z}
  Z\bigl((u,p),(v,q)\bigl) \equiv (u,\div q)_0+(p,q)_0+(\div p,v)_0 .
\end{equation}
\begin{proposition} [Solution of the mixed discrete problem] %%% modif fd 01 oct 2016  
  \label{prop:PG-disc}
  The pair $(u,p)\in V$ is a  solution of problem \eqref{eq:PG-laplace} if and only if 
  \begin{equation}
    \label{eq:FV-laplace}
    \gT u = p, \qquad  -\div( \gT u) = f_\T,
  \end{equation}
  where $f_\T\in P^0$ is the projection of $f$, defined by,
  \begin{displaymath}
    f_\T = \sum_{K\in \T^2} f_K \, \un_K,\qquad 
    f_K = \dfrac{1}{|K|} \int_K f \dd x.
  \end{displaymath}
  If $\,\, (\vphi_a, \vphi_a^\star)>0 \,\, $ for all $ \,\, a\in\T^1$,  
  then problem \eqref{eq:PG-laplace} has a  unique solution.
\end{proposition}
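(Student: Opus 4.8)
The plan is to establish the equivalence between \eqref{eq:PG-laplace} and \eqref{eq:FV-laplace} one equation at a time, each step being reversible, and then to derive well-posedness from the positivity hypothesis by exhibiting a symmetric positive-definite structure on $P^0$.

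First I would handle the first equation of \eqref{eq:PG-laplace}. Thanks to the identity \eqref{eq:RT*-1-3.1} already established, the requirement that $(p,q)_0 + (u,\div q)_0 = 0$ for all $q\in RT^\star$ rewrites as $(p-\gT u,\,q)_0 = 0$ for all $q\in RT^\star$. Setting $r := p - \gT u = \sum_{a\in\T^1} r_a\vphi_a \in RT$ and testing against $q = \vphias$, the orthogonality \eqref{eq:RT*} collapses the sum to $r_a(\vphi_a,\vphias)_0 = 0$; since $(\vphi_a,\vphias)_0\neq 0$ by \eqref{eq:RT*-0}, every coefficient $r_a$ vanishes, hence $p = \gT u$. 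This is the step where the duality between $RT$ and $RT^\star$ does the work and is what makes the Petrov-Galerkin pairing non-degenerate.

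Next comes the second equation. Because $v$ ranges only over $P^0$ and $f_\T$ is by construction the $\ld$-orthogonal projection of $f$ onto $P^0$, one has $(f,v)_0 = (f_\T,v)_0$ for every $v\in P^0$. As $\div p$ and $f_\T$ both lie in $P^0$, on which $(\cdot,\cdot)_0$ is non-degenerate, the condition $-(\div p,v)_0 = (f,v)_0$ for all $v\in P^0$ is equivalent to $-\div p = f_\T$. Combined with $p=\gT u$ this is exactly $-\div(\gT u) = f_\T$, which closes the equivalence \eqref{eq:PG-laplace} $\Leftrightarrow$ \eqref{eq:FV-laplace}.

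Finally, for uniqueness the equivalence reduces the problem to the square linear system $-\div(\gT u) = f_\T$ in the unknown $u\in P^0$ (the component $p = \gT u$ being then determined). I would analyse the endomorphism $L u := -\div(\gT u)$ of $P^0$ via its bilinear form: inserting the explicit fluxes \eqref{eq:disc-grad-1} together with the divergence formula \eqref{eq:RT-div} and regrouping the cell-wise sum edge by edge, I expect
\begin{displaymath}
  (Lu,v)_0 = \sum_{\ai} \frac{(u_L-u_K)(v_L-v_K)}{(\vphi_a,\vphias)_0} + \sum_{\ab} \frac{u_K\,v_K}{(\vphi_a,\vphias)_0}, \qquad u,v\in P^0.
\end{displaymath}
This form is visibly symmetric, and under the hypothesis $(\vphi_a,\vphias)_0 > 0$ the quantity $(Lu,u)_0$ is a sum of non-negative terms. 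The main obstacle is the definiteness: $(Lu,u)_0 = 0$ forces $u_K = u_L$ across every internal edge and $u_K = 0$ for every cell carrying a boundary edge, so that the connectedness of the mesh (its cell-adjacency graph being connected since $\Omega$ is connected) together with the existence of at least one boundary edge yields $u\equiv 0$. Thus $L$ is symmetric positive-definite, hence invertible, and \eqref{eq:PG-laplace} admits a unique solution.
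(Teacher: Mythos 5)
Your proof is correct, and the equivalence part coincides with the paper's argument: both directions rest on the identity \eqref{eq:RT*-1-3.1}, on the orthogonality \eqref{eq:RT*} combined with \eqref{eq:RT*-0} to make the pairing between $RT$ and $RT^\star$ non-degenerate, and on the fact that $f_\T$ is the ${\rm L}^2$-projection of $f$ onto $P^0$. Where you genuinely diverge is the well-posedness step. The paper also reduces to injectivity of a square system and works with the same quadratic quantity --- indeed $\sum_{a} p_a^2 \, (\vphi_a,\vphi_a^\star)_0$ evaluated with the fluxes \eqref{eq:disc-grad-1} is exactly your $(Lu,u)_0$ --- but it stops at $\gT u = 0$, then returns to the first equation of \eqref{eq:PG-laplace} to get $(u,\div q)_0=0$ for all $q\in RT^\star$, extends this to all $q\in RT$ via \eqref{eq:RT*-3.2}, and invokes the surjectivity $\div(RT)=P^0$ to conclude $u=0$. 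You instead assemble the explicit edge-based stiffness form and finish in finite-volume style: vanishing of the form forces $u$ to agree across every internal edge and to vanish on cells carrying a boundary edge, and connectivity of the cell-adjacency graph kills $u$. Your route is more elementary and makes the identification with the four point finite volume scheme (which the paper only states after the proposition) visible inside the proof itself; its one extra ingredient is the dual-graph connectivity, which you attribute a bit loosely to connectedness of $\Omega$ --- it does hold here (for a conformal triangulation of a convex polygonal domain, a segment joining interior points of two cells and avoiding the finitely many vertices yields a dual path), but it is a property of the triangulation, not a formal consequence of topological connectedness of $\Omega$. The paper's route avoids any combinatorial argument at the price of using the standard, but there unproven, surjectivity of $\div$ on $RT$.
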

Proposition \ref{prop:PG-disc} shows an equivalence between the mixed Petrov-Galerkin discrete problem  \eqref{eq:PG-laplace} and the discrete problem (\ref{eq:FV-laplace}). Problem  (\ref{eq:FV-laplace}) actually is  a finite volume problem.
Precisely, with 
\eqref{eq:disc-grad-1},
it becomes: find $u\in P^0$ so that, for all $K\in \T^2$:
\begin{displaymath}
  % \label{eq:FV-laplace-exp}
  \sum_{
    \substack{
      a\in\T^1_i,~\partial^c a = (K,L)
          }}
  \dfrac{u_L - u_K}{\ca} 
  +
  \sum_{
    a\in\T^1_b,~\partial^c a = (K)
  }
  \dfrac{ - u_K}{\ca} = |K| f_K.
\end{displaymath}
It is interesting to notice that this problem only involves
the coefficients $\ca$ that are going to be computed later.

\begin{proof}
  Let $u\in P^0$, denote $p=\gT u \in RT$ and assume that $\div p = f_T$. 
  Then using relation \eqref{eq:RT*-1-3.1}, equation
  \eqref{eq:PG-laplace} clearly holds.
  \\
  Conversely, consider $(u,p)\in V$ a solution of problem \eqref{eq:PG-laplace}. 
  Relation \eqref{eq:RT*-1-3.1} implies that $p=\gT u$,
  as a result, $-\div (\gT u ) = f_\T$.
  \\ \indent
  We assume that 
  $(\vphi_a, \vphi_a^\star)>0$ for all $a\in\T^1$
  and prove existence and uniqueness.
  It suffices to prove that $u=0$ is the unique solution when $f_\T = 0$. 
  In such a case, $\div( \gT u) = 0$, and using successively (\ref{eq:RT*-3.2}) and (\ref{eq:RT*-1-3.1}):
  \begin{align*}
    0 = 
    -(\div (\gT u) , u)_0 &= 
    -(\div (\Pi\gT u) , u)_0 \\&= 
    (\Pi\gT u , \gT u )_0  \\&= 
    \sum_{a\in\T^1} p_a^2 (\vphi_a,\vphi_a^\star)
    .
  \end{align*}
  As a result $p_a=0$ for all $a\in\T^1$ and $p=\gT u=0$.
  From (\ref{eq:PG-laplace}) it follows that for all $q\in RT^\star$ we have $(u,\div q)_0=0$.
  Thus with (\ref{eq:RT*-3.2}) we also have $(u,\div q)_0=0$ for all $q\in RT$.
  Since $\div(RT)=P^0$ it follows that $u=0$.
\end{proof}

%%%%%%%%%%%%%%%%%%%%%%%%%%%%%%%%%%%%%%%%%%%%%%%%%%%%%%%% 
% 
% 
\section{Retrieving the four point finite volume scheme}
\label{sec:recover-VF4}
In this section we present sufficient conditions for the
construction of Raviart-Thomas dual basis.
These conditions will allow to compute
the coefficients
$(\vphi_a^\star, \vphi_a)_0$.
We start by introducing the normal flux $g$ on the edges, and the divergence
of the dual basis $\delta_K$ on  $K\in \T^2$.
\\
Let $g:~(0,1)\rightarrow \R$ be a continuous function so that,
\begin{equation}
  \label{eq:def-g}
  \int_0^1 g  \dd s  = 1,\qquad
  \int_ 0^1 g(s)s^2  \dd s =0,
  \quad g(0) = 0 
  \quad \text{and}\quad
  g(s) = g(1-s). 
\end{equation}
On a mesh $\T$ are defined $g_{K,i}:~ a_{K,i} \rightarrow \R$ for $K\in\T^2$ and $i=1$, 2, 3 as,
\begin{equation}
  \label{eq:def-g_Ki}
  g_{K,i}(x) = g(s) / |a_{K,i}|\quad \text{for}\quad 
  x = s S_{K,i} + (1-s) N_{K,i}.
\end{equation}
For $K\in \T^2$ is denoted $\delta_K\in {\rm L}^2(K)$
a function that satisfies,
\begin{equation}
  \label{eq:def-delta_K}
  \int_K \delta_K \dd x = 1 \quad \text{and}\quad
  \int_K \delta_K(x) |x - W_{K,i}|^2 \dd x = 0 
  \quad \text{for}\quad
  i =1,~2,~ 3.
\end{equation}

To a family $(\vphisKi)$ of functions on $\Omega$ for $K\in\T^2$ and for $i=1$, 2, 3
is associated the family $(\vphias)_{a\in\T^1}$ so that,
% \begin{align}
\begin{equation}
  \label{eq:loc-basis}
  \begin{array}{lll}
    \vphias = \vphisKi - \vphisLj  \quad  &\text{if} \quad \ai
    \quad  &\text{and}\quad a=a_{K,i}=a_{L,j}
    \\[5pt]
    \vphias = \vphisKi \quad  &\text{if}  \quad \ab
    \quad  &\text{and}\quad a=a_{K,i}.
  \end{array}
\end{equation}
% \end{align}
This is the same correspondence as in \eqref{eq:loc-basis-RT} between the Raviart-Thomas local basis functions 
$(\vphi_{K,i})$
and the Raviart-Thomas basis functions
$(\vphi_a)_{a\in\T^1}$.
Similarly,
we will denote $\vphiasK = \vphis_{K,i}$ for $a\in\T^1$ such that $a\subset  K$ and $a=a_{K,i}$.
\begin{theorem}
  \label{thm:VF4}
  Consider a family $(\vphi^\star_{K,i})_{K\in\T^2,\,i=1,\, 2,\, 3}$
  of local basis
  functions  on $\Omega$
  that satisfy 
  \begin{equation}
    \label{eq:phi-star-def-1.0}
    \supp \vphi^\star_{K,i} \subset K 
  \end{equation}
  and independently on $i$,
  \begin{equation}
    \label{eq:phi-star-def-1.1}
    \div \vphi^\star_{K,i} = \delta_K, \quad \text{on $K$} .
  \end{equation}
  On $\partial K$, the normal component is given by
  \begin{equation}
    \label{eq:phi-star-def-1.2}
    \vphi^\star_{K,i}\cdot n_K =
    \begin{cases}
      g_{K,i} &\quad  \text{on}\quad a_{K,i} 
      \\
      0       &\quad  \text{otherwise}
    \end{cases}
    ,
  \end{equation}
  where $g_{K,i}$ and $\delta_K$ satisfy 
  equations \eqref{eq:def-g}, \eqref{eq:def-g_Ki} and \eqref{eq:def-delta_K}.\\
  Let  $\drtb$ be constructed from the local basis functions
  $(\vphi^\star_{K,i})_{K,i}$
  with equation \eqref{eq:loc-basis}.
  Then $\drtb$ is a Raviart-Thomas dual basis as defined in definition \ref{def:RT}. 
  Moreover, the coefficients $\ca$ only depend on the mesh $\T$ geometry,
  \begin{equation}
    \label{eq:coef-cotan}
    \begin{array}{lll}
      \ai \quad &\Rightarrow \quad &
      \ca = \left(\cotan \theta_{a,K}
        +\cotan \theta_{a,L}\right) / 2,
      \\[5pt]
      \ab \quad& \Rightarrow \quad &
      \ca = \cotan \theta_{a,K} /2\,.
    \end{array}
  \end{equation}
\end{theorem}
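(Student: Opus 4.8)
The plan is to verify, in turn, the four defining properties of a Raviart-Thomas dual basis (the $\Hdiv$-conformity and nonvanishing in \eqref{eq:RT*-0}, the orthogonality \eqref{eq:RT*}, the localisation \eqref{eq:RT*-2} and the flux normalisation \eqref{eq:RT*-3}), and then to extract the coefficients $\ca$. The localisation \eqref{eq:RT*-2} is immediate, since \eqref{eq:loc-basis} together with \eqref{eq:phi-star-def-1.0} places $\supp \varphi_a^\star$ inside $K\cup L$ (resp. $K$), exactly the support of $\varphi_a$ recalled after \eqref{eq:loc-basis-RT}. For the $\Hdiv$-conformity I would check continuity of the normal trace across each internal edge $a=a_{K,i}=a_{L,j}$: by \eqref{eq:phi-star-def-1.2} the $n_a$-component seen from $K$ is $g_{K,i}$ and from $L$ is $g_{L,j}$, and since the endpoint ordering $(S,N)$ is reversed between the two triangles (the outward normals being opposite), the two profiles coincide precisely because of the symmetry $g(s)=g(1-s)$ in \eqref{eq:def-g}; on a boundary edge the two non-$a$ edges of $K$ carry zero normal trace, so no jump occurs. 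The flux normalisation \eqref{eq:RT*-3} then follows by the change of variables $\dd l=|a|\dd s$: one has $\int_a \varphi_a^\star\cdot n_a\,\dd l=\int_0^1 g(s)\,\dd s=1$ by \eqref{eq:def-g}, while $\int_b \varphi_a^\star\cdot n_b\,\dd s=0$ for $b\neq a$ because the normal trace of $\varphi_a^\star$ vanishes on every edge other than $a$.

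The core of the argument is the evaluation of the local products $(\varphi^\star_{K,i},\varphi_{K,i'})_{0,K}$, which I would compute by Green's formula. Writing $\varphi_{K,i'}=\frac{1}{4|K|}\nabla|x-W_{K,i'}|^2$ from \eqref{eq:phi_K-i} and integrating by parts,
\[
(\varphi^\star_{K,i},\varphi_{K,i'})_{0,K}
=\frac{1}{4|K|}\Bigl(-\int_K (\div\varphi^\star_{K,i})\,|x-W_{K,i'}|^2\dd x+\int_{\partial K}(\varphi^\star_{K,i}\cdot n_K)\,|x-W_{K,i'}|^2\dd l\Bigr).
\]
The volume term vanishes identically because $\div\varphi^\star_{K,i}=\delta_K$ by \eqref{eq:phi-star-def-1.1} and $\int_K\delta_K|x-W_{K,i'}|^2\dd x=0$ by \eqref{eq:def-delta_K}; the boundary term reduces to $\frac{1}{4|K|}\int_{a_{K,i}}g_{K,i}\,|x-W_{K,i'}|^2\dd l$ by \eqref{eq:phi-star-def-1.2}.

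Then I would split into two cases. When $i'\neq i$ the vertex $W_{K,i'}$ is an endpoint of the edge $a_{K,i}$, so on $a_{K,i}$ one has $|x-W_{K,i'}|^2=|a|^2 s^2$ or $|a|^2(1-s)^2$; the integral becomes $|a|^2\int_0^1 g(s)s^2\,\dd s=0$ (using $g(s)=g(1-s)$ in the second subcase), by the vanishing second moment in \eqref{eq:def-g}. Hence the local product is zero whenever $i'\neq i$, which is exactly the orthogonality \eqref{eq:RT*}: for $a\neq b$ the product $(\varphi_a^\star,\varphi_b)_0$ receives contributions only from triangles $K$ shared by $a$ and $b$, on which $a$ and $b$ are distinct edges ($i\neq i'$), so every contribution vanishes. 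When $i'=i$ I would expand $|x-W_{K,i}|^2=As^2+Bs+C$ along $a_{K,i}$ and use the three moments $\int_0^1 g=1$ and $\int_0^1 g(s)s^2\,\dd s=0$ from \eqref{eq:def-g} together with $\int_0^1 g(s)s\,\dd s=\tfrac12$, which I would derive from the symmetry $g(s)=g(1-s)$. This collapses the boundary integral to $(N_{K,i}-W_{K,i})\cdot(S_{K,i}-W_{K,i})$, the scalar product of the two sides of $K$ meeting at the vertex $W_{K,i}$ opposite $a$.

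The last step is geometric: $(N_{K,i}-W_{K,i})\cdot(S_{K,i}-W_{K,i})=|N_{K,i}-W_{K,i}|\,|S_{K,i}-W_{K,i}|\cos\theta_{a,K}$ while $|K|=\tfrac12\,|N_{K,i}-W_{K,i}|\,|S_{K,i}-W_{K,i}|\sin\theta_{a,K}$, so $(\varphi^\star_{K,i},\varphi_{K,i})_{0,K}=\tfrac12\cotan\theta_{a,K}$. Summing the diagonal local products over $\partial^c a$ — the cross terms $(\varphi^\star_{K,i},\varphi_{L,j})_0$ vanishing by disjoint supports — yields $\ca=\tfrac12(\cotan\theta_{a,K}+\cotan\theta_{a,L})$ for $\ai$ and $\ca=\tfrac12\cotan\theta_{a,K}$ for $\ab$, which is \eqref{eq:coef-cotan}; in particular this is the value whose nonvanishing completes \eqref{eq:RT*-0} (nonzero under the usual angle condition on $\T$, and positive in the regime required by Proposition \ref{prop:PG-disc}). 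I expect the main obstacle to be the careful bookkeeping of the normal-trace orientations in the conformity step and, above all, the moment evaluation of the $i'=i$ boundary integral together with its identification as a cotangent via the area formula; the three prescribed moments of $g$ and the vanishing moments of $\delta_K$ are tailored precisely to make these two calculations succeed.
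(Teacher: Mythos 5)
Your proposal is correct and follows essentially the same route as the paper's proof: verification of the four defining properties via Green's formula applied to $\vphi_{K,i}=\frac{1}{4|K|}\nabla|x-W_{K,i}|^2$, with the moment conditions \eqref{eq:def-g} and \eqref{eq:def-delta_K} killing exactly the volume term and the off-diagonal boundary terms, and the diagonal term identified trigonometrically as $\tfrac12\cotan\theta_{a,K}$. The only (cosmetic) differences are that you obtain the flux normalisation by direct change of variables rather than the divergence theorem, and you evaluate the diagonal boundary integral by expanding $|x-W|^2$ as a quadratic in $s$ instead of introducing the orthogonal projection $H$ of $W$ onto $a$ as the paper does; both routes land on $(S-W)\cdot(N-W)$ and the same cotangent formula.
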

Notations are recalled on figure \ref{fig:cell-2}.
We will also denote $g_{a,K} = g_{K,i}$ for $a\in\T^1$ such that $a\subset  K$ and $a=a_{K,i}$.
%%%%%%%%%%%%%%%%%%%%%%%%%%%%%%%%%%%%%%%%%%%%%%%%% 
\begin{figure}[!ht]
  \begin{center}
    {\includegraphics[height=4.6cm] {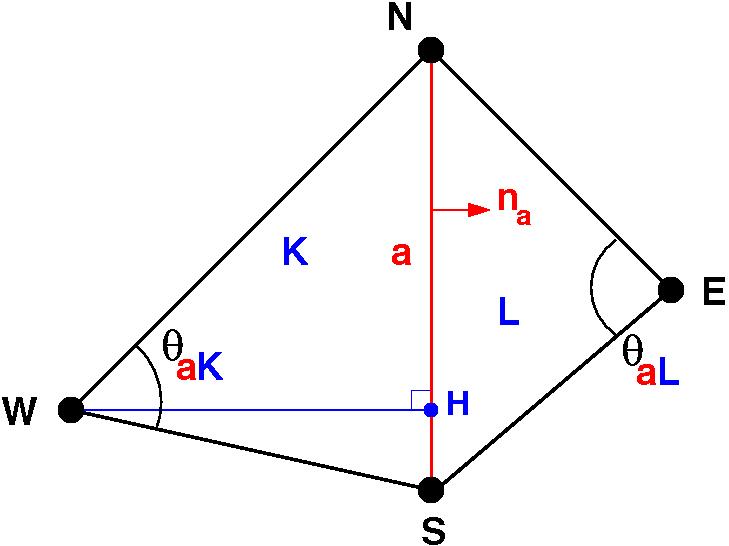}}
  \end{center}
  \caption{Co-boundary of the edge $ \, a \in \T^1 $.}
  \label{fig:cell-2}
\end{figure}
%%%%%%%%%%%%%%%%%%%%%%%%%%%%%%%%%%%%%%%%%%%%%%%%% 
\begin{corollary}
  \label{coro:1}
    Assume that the mesh
    satisfies the Delaunay condition: for all
    internal edge $a\in\T^1$ we have the angle condition 
    $\theta_{a,K} + \theta_{a,L} < \pi$ (denoting $\partial ^c a=(K,L)$).
    Also assume that for any boundary edge $a$,  $\theta_{a,K} < \pi/2$
    (denoting $\partial ^c a=(K)$).
    \\
    Then with \eqref{eq:coef-cotan}, $\ca>0$ and proposition
    \ref{prop:PG-disc} 
    ensures the existence and uniqueness  of the solution
    to the discrete problem.
  Moreover, 
  the mixed Petrov-Galerkin discrete problem
  \eqref{eq:FV-laplace} for the Laplace equation \eqref{eq:laplace}
  coincides with the four point finite volume scheme
    defined and analysed in Herbin \cite {Herbin95}.
  \end{corollary}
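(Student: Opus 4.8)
The plan is to establish the three assertions of the corollary in turn: the positivity of the coefficients $\ca$ under the Delaunay hypotheses, the resulting well-posedness, and finally the coincidence with Herbin's four point scheme. The first two are short, and all the real content is in the third.

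\textbf{Positivity of the coefficients.} For a boundary edge with $\partial^c a=(K)$, formula \eqref{eq:coef-cotan} gives $\ca=\cotan\theta_{a,K}/2$. As $\theta_{a,K}\in(0,\pi)$ we have $\sin\theta_{a,K}>0$, so $\cotan\theta_{a,K}>0$ is equivalent to $\cos\theta_{a,K}>0$, i.e. to $\theta_{a,K}<\pi/2$, which is exactly the assumed boundary angle condition. For an internal edge with $\partial^c a=(K,L)$ I would use the elementary identity
\[
\cotan\theta_{a,K}+\cotan\theta_{a,L}
=\frac{\sin(\theta_{a,K}+\theta_{a,L})}{\sin\theta_{a,K}\,\sin\theta_{a,L}}.
\]
Both angles lie in $(0,\pi)$, so the denominator is positive; the Delaunay condition $\theta_{a,K}+\theta_{a,L}<\pi$ makes the numerator positive as well, whence $\ca>0$. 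Once $\ca>0$ is known for every $a\in\T^1$, existence and uniqueness follow immediately by invoking the last statement of Proposition \ref{prop:PG-disc}; nothing further is needed for that part.

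\textbf{Coincidence with the four point scheme.} By Proposition \ref{prop:PG-disc} the Petrov--Galerkin problem is already rewritten as the cell balance displayed just after that proposition, whose only data are the coefficients $1/\ca$. It therefore suffices to identify $1/\ca$ with the transmissivity of Herbin's scheme \cite{Herbin95}, which on a Delaunay mesh places one node at each triangle circumcenter and sets the flux across an edge $a$ between $K$ and $L$ equal to $(|a|/d_a)(u_K-u_L)$, where $d_a$ is the distance between the two circumcenters. The key geometric fact I would prove is that the signed perpendicular distance from the circumcenter $O_K$ of $K$ to the line carrying $a$, counted positively towards the interior of $K$, equals $\tfrac12|a|\cotan\theta_{a,K}$: writing $R$ for the circumradius, the law of sines gives $|a|=2R\sin\theta_{a,K}$ while the perpendicular distance from $O_K$ to $a$ is $R\cos\theta_{a,K}$, and this stays valid with the correct sign when $\theta_{a,K}$ is obtuse and $O_K$ lies outside $K$. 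Since both circumcenters lie on the perpendicular bisector of $a$ and, under the Delaunay condition, on opposite sides of $a$, the two signed distances add, so
\[
d_a=\tfrac12|a|\bigl(\cotan\theta_{a,K}+\cotan\theta_{a,L}\bigr),
\qquad\text{hence}\qquad
\frac{|a|}{d_a}=\frac{2}{\cotan\theta_{a,K}+\cotan\theta_{a,L}}=\frac1{\ca}
\]
by \eqref{eq:coef-cotan}. The boundary case is the same computation with a single half-distance from $O_K$ to $a$, giving $|a|/d_a=2/\cotan\theta_{a,K}=1/\ca$ and the Dirichlet flux $(|a|/d_a)u_K$. Substituting these transmissivities into the cell balance shows it is term-by-term the four point scheme of \cite{Herbin95}.

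\textbf{Main obstacle.} The only delicate point is the geometric identification in the third part: one must keep careful track of the signs of the circumcenter-to-edge distances when a triangle is obtuse, and verify that the Delaunay condition is precisely what guarantees the two circumcenters sit on opposite sides of $a$, so that $d_a>0$ and the dual Voronoi mesh underlying Herbin's scheme is admissible in the usual sense. Matching the exact normalisation and sign conventions of \cite{Herbin95} for the boundary fluxes also requires some care, but presents no real difficulty once the transmissivity identity $|a|/d_a=1/\ca$ is in hand.
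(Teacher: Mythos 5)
Your proof is correct and takes essentially the same route as the paper: positivity comes from the same cotangent addition formula, and the equivalence reduces to the identity $d_{a,K}=\tfrac12\,|a|\,\cotan\theta_{a,K}$ for the circumcenter-to-edge distance, which the paper obtains from the central-angle theorem in the isosceles triangle formed by the circumcenter and the two endpoints of $a$, and you obtain from the extended law of sines --- two phrasings of the same geometric fact. One small correction: under the Delaunay condition the two circumcenters may lie on the \emph{same} side of $a$ (when one of the two opposite angles is obtuse), so the justification ``on opposite sides'' is not quite accurate; your signed-distance computation is nevertheless valid, because signed distances along the perpendicular bisector of $a$ add algebraically in every configuration, and the Delaunay condition guarantees exactly that their sum $\tfrac12\,|a|\,(\cotan\theta_{a,K}+\cotan\theta_{a,L})$ is positive.
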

Therefore, the  Raviart-Thomas dual basis
does not need to be constructed.
Whatever are $\delta_K$ and $g$ that satisfy equations \eqref{eq:def-g}, 
\eqref{eq:def-g_Ki} and \eqref{eq:def-delta_K}, the coefficients $\ca$ will be unchanged. 
They only depend on the mesh geometry and are given by equation \eqref{eq:coef-cotan}. 
Practically, this means that neither the $\drtb$ nor $\delta_K$ and $g$ need to be computed.
Such a dual basis will be explicitly
computed in section \ref{sec:def-drtb}.
The numerical scheme will always coincide with the 
four point volume scheme. %VF4 one.
Finally, this theorem
provides a new point of view 
for the understanding and analysis of finite volume methods.
\\
Theorem \ref{thm:VF4} gives sufficient conditions in order to build 
Raviart-Thomas dual basis. 
In the sequel we will focus on such Raviart-Thomas dual basis, though more general ones may exist: this will not be discussed in this paper.
  \begin{proof}[Proof of corollary \ref{coro:1}]
    We have the general formula
    $\cotan \theta_1 + \cotan \theta_2 =
    \sin(\theta_1 + \theta_2) / (\sin \theta_1 \,\sin \theta_2)$
    that ensures that $ \ca>0$ under the assumptions in the corollary.
    \\
    For the equivalence between the two schemes,
    it suffices to prove that
    $\cotan \theta_{a,K} /2 = d_{a,K}/|a|$
    where $d_{a,K}$ denotes the distance between the edge $a$
    and the circumcircle centre $C$ of $K$.
    Denote $S$ and $N$ the two vertexes of $a$.
    Then the angle $\widehat{SCN}=2\theta_{a,K}$.
    The distance $d_{a,K}$ is equal to $CH$
    with $H$ the orthogonal projection of $C$ on $a$.
    The triangle $SCN$ being isosceles, $H$ is also  
    the midle of $[SN]$.
    In the right angled triangle $SCH$ we have
    $\widehat{SCH} = \widehat{SCN} /2 = \theta_{a,K}$
    and $\cotan \widehat{SCH} = CH / SH = d_{a,K}/ (|a|/2)$
    which gives the result.
  \end{proof}
\begin{proof}[Proof of theorem \ref{thm:VF4}]
  Consider as in theorem \ref{thm:VF4} a family $(\vphi^\star_{K,i})_{K\in\T^2,\,i=1,\, 2,\, 3}$
  that satisfy,
  \eqref{eq:phi-star-def-1.0}, 
  \eqref{eq:phi-star-def-1.1} and 
  \eqref{eq:phi-star-def-1.2}
  for $\delta_K$ and $g_{K,i}$ such that the assumptions 
  (\ref{eq:def-g}), (\ref{eq:def-g_Ki}) and (\ref{eq:def-delta_K})
  are true.
  Let  $\drtb$ be constructed from the local basis functions
  $(\vphi^\star_{K,i})_{K,i}$
  with equation \eqref{eq:loc-basis}.
  \\ \indent
  Let us first prove that $\drtb$ is a Raviart-Thomas dual basis as in definition \ref{def:RT}.
  Consider an internal edge $a\in\T^1$, $a=(K\vert L)$. With (\ref{eq:phi-star-def-1.2}), 
  we have $\supp \vphias = K\cup L$ and 
  relation (\ref{eq:RT*-2}) holds. 
  With \eqref{eq:loc-basis}, 
  $\vphi^\star_{a\,\vert  K}=\vphi^\star_{K,a}\in H(\div,K)$,     
  $\vphi^\star_{a\,\vert  L}=-\vphi^\star_{L,a}\in H(\div,L)$. 
  The normal flux $\vphias\cdot  n_a$ is continuous across $a=K\cap L$ since $g_{K,a}  =  g_{L,a} $ 
  and with  (\ref{eq:phi-star-def-1.2}). Moreover, $\vphias\cdot n=0$ on the boundary 
  of $K\cup L$ due to (\ref{eq:phi-star-def-1.2}). Therefore $\vphias$ belongs to
  $H(\div,\Omega)$. With formula (\ref{eq:coef-cotan}) and 
  the angle condition made in theorem \ref{thm:VF4}, $(\vphia,\vphias)_0\neq 0$ 
  and so (\ref{eq:RT*-0}) holds.
  \\
  Consider two distinct edges $a,b\in\T^1$. 
  If $a$ and $b$ are not two edges of a same triangle $K\in\T^2$, 
  then $\vphias$ and $\vphi_b$ have distinct supports so that $(\vphias,\vphi_b)_0 = 0$. 
  If $a$ and $b$ are two edges of $K\in\T^2$, then 
  $(\vphias,\vphi_b)_0 = \int_K \vphias\cdot \vphi_b \,\, {\rm d}x$. 
  With the definition (\ref{eq:phi_K-i}) of the local $RT$ basis functions and using the Green formula,
  \begin{align*}
    \pm 4 \vert  K\vert  (\vphias,\vphi_b)_0 &=
    - \int_K \div \vphias \, \vert x - W_{K,b} \vert ^2 \, {\rm d}x        + 
    \int_{\partial  K} \vphias\cdot n \vert s - W_{K,b} \vert ^2 \, {\rm d}s    \\ & =
    - \int_K \delta_K \, \vert x - W_{K,b} \vert ^2 \, {\rm d}x    + 
    \int_0^1 g(s) \,s  ^2 \,ds,
  \end{align*}
  using (\ref{eq:phi-star-def-1.1}), (\ref{eq:phi-star-def-1.2}) and the fact 
  that $W_{K,b}$ is opposite to $b$ and so is a vertex of $a$.
  This implies the orthogonality condition (\ref{eq:RT*}) with the assumptions 
  in  (\ref{eq:def-g}) and (\ref{eq:def-delta_K}).
  \\
  It remains to prove (\ref{eq:RT*-3}).
  In the case where  $a,b\in\T^1$ are two distinct edges, $\int_b \vphias\cdot n_b \, {\rm d}s=0$.
  Assume that $a\in\T^1$ is an edge of $K\in\T^2$. We have $n_a = \varepsilon n_{K,a}$ 
  with $\varepsilon=\pm 1$.
  With relation (\ref{eq:phi-star-def-1.2}) and the divergence formula,
  \begin{displaymath}
    \int_a \vphias\cdot n_a \,\, {\rm d}s  =  \int_{a} 
    (\varepsilon \vphi_{K,a}^\star) \cdot (\varepsilon n_{K,a})\, \, {\rm d}s  
    = \int_{\partial  K} \vphi_{K,a}^\star\cdot  n\, \, {\rm d}s = \int_K \div \vphi_{K,a}^\star\, \, {\rm d}x. 
  \end{displaymath}
  This ensures that $\int_a \vphias\cdot n_a\,ds = 1$ with relation (\ref{eq:phi-star-def-1.1}) 
  and the first assumption in (\ref{eq:def-delta_K}).
  We successively proved (\ref{eq:RT*-0}), (\ref{eq:RT*}), (\ref{eq:RT*-2}) and (\ref{eq:RT*-3}) 
  and then $\drtb$ is a Raviart-Thomas dual basis.%%%%   }
  \\ \indent
  Let us now prove \eqref{eq:coef-cotan}.
  Let $a\in\T^1$ an internal edge with the notations in figure \ref{fig:cell-2}.
  The Raviart-Thomas basis function $\vphia$ has its support in $K\cup L$,
  so that
  $$
  (\vphias,\vphia)_0= \int_K  \vphias \cdot  \vphia \, {\rm d}x + 
  \int_L  \vphias \cdot  \vphia \, {\rm d}x.$$
  With the local decompositions 
  (\ref{eq:loc-basis-RT}) and (\ref{eq:loc-basis}) we have,
  \begin{displaymath}
    (\vphias,\vphia)_0= \int_K \vphiasK\cdot \vphiaK \,\, {\rm d}x + 
    \int_L \vphiasL\cdot \vphiaL \,\, {\rm d}x\,.
  \end{displaymath}
  By  relation \eqref{eq:phi_K-i}, $W$ being the opposite vertex to the edge
  $a$ in the triangle $K$, 
  \begin{eqnarray*}
    4|K| \int_K \vphiasK\cdot\vphiaK \,\, {\rm d}x & =&\int_K \vphiasK\,\nabla|x-W|^2\, {\rm d}x 
    \\%\qquad\qquad\hspace{3.cm}\\
    & =&
    -\int_K \div \vphiasK \,|x-W|^2\, {\rm d}x
    +\int_{\partial  K}\vphiasK\cdot n_K\,|x-W|^2 \, {\rm d}\sigma.
  \end{eqnarray*}
  By hypothesis (\ref{eq:phi-star-def-1.1}) and (\ref{eq:phi-star-def-1.2}),
  and using (\ref{eq:def-delta_K}),
  \begin{align*}
    4|K| \int_K \vphiasK\cdot\vphiaK \,\, {\rm d}x  &=\int_K \delta_K |x-W|^2\, {\rm d}x 
    + \int_a g_{K,a}\,|x-W|^2 \, {\rm d}\sigma
    % \\&
    =\int_a g_{K,a}\,|x-W|^2 \, {\rm d}\sigma.  
  \end{align*}
  Let H be the orthogonal projection of the point $W$ on the edge $a$.
  We have $|x-W|^2= WH^2+|x-H|^2$ and 
  with (\ref{eq:def-g}) and (\ref{eq:def-g_Ki}), 
  $
  \int_a g_{K,a}  \, {\rm d}\sigma = |a| \int_0^1 g(s)/|a| \, {\rm d}s = 1
  $ and so,
  % Moreover $\int_a\vphias\cdot n_a\,ds=1$ 
  
  \smallskip \centerline { $ \displaystyle  %%%  \begin{displaymath}%{cc}
    4|K|\int_K \vphiasK\cdot\vphiaK \,\, {\rm d}x=WH^2 +  \int_a g_{K,a} |x-H|^2  \, {\rm d}\sigma. $}

  \smallskip\noindent %%    \end{displaymath}
  Let $s$ and $s^\star$ respectively be the curvilinear coordinates of $x$ and $H$ on $a$ with origin $S$, then
  
  \smallskip \centerline { $ \displaystyle  %%%  \begin{displaymath}%{cc}
    4|K|\int_K \vphiasK\cdot\vphiaK \,\, {\rm d}x=
    WH^2+|a|^2\int_0^1(s^\star-s)^2g(s)ds.$}

  \smallskip\noindent %%    \end{displaymath}
  The assumptions in (\ref{eq:def-g}) on $g$ imply that $2\int_0^1 g(s)sds=1$.
  By expanding $(s^\star-s)^2= s^2 - 2ss^\star + s^{\star \, 2}$ we get,
  $\int_0^1(s^\star-s)^2g(s)ds = s^{\star \, 2} -  s^\star$.
  It follows that,
  \begin{align*}
    4|K|\int_K \vphiasK\cdot\vphiaK \,\, {\rm d}x&=
    WH^2 + 
    \bigl( |a| s^\star \bigl) 
    \bigl( |a| (s^\star-1)\bigl)
    \\&=
    WH^2 + \overrightarrow{SH}\cdot
    \overrightarrow{NH}
    \\&=
    \overrightarrow{WS}\cdot \overrightarrow{WN}
    .
  \end{align*}
  Some trigonometry results in $K$ leads to
  $\sin \theta_{K,a}=\frac{2|K|}{WS\cdot WN}$. As a result, 
  $$ 4|K|\int_K \vphiasK\cdot\vphiaK 
  \, {\rm d}x = 2|K| \, \cotan\theta_{K,a},$$ this gives (\ref{eq:coef-cotan}).
\end{proof}
%% 
%% 
%%%%%%%%%%%%%%%%%%%%%%%%%%%%%%%%%%%%%%%%%%%%%%%%%%%%%%%% 
\section{Stability and convergence}
%%%%%%%%%%%%%%%%%%%%%%%%%%%%%%%%%%%%%%%%%%%%%%%%%%%%%%%% 
\label{sec:stab-conv}
In this section we develop a specific choice of dual basis functions.
We  provide for that choice  technical estimates  and
prove a theorem of stability and convergence.
With theorem \ref{thm:VF4}, this leads to an error estimate for
the four point finite volume scheme.
We begin with the main result in theorem \ref{thm:conv-stab}.
Theorem \ref{th:stabilite} provides a methodology in order to get the
inf-sup stability conditions.
The inf-sup  conditions need technical results that are proved
in  subsections \ref{LocaldualRTmassmatrix} to \ref{LocalRTMassMatrix}.
We will need the following angle condition. 
\\
\textbf{Angle assumption.}
Let $\theta_\star$ and $ \theta^\star$ chosen such that
\begin{equation}
  \label{eq:angle-cond0}
  0 < \theta_\star < \theta^\star <\pi /2
\end{equation}
We consider meshes $\T$ such that all the angles
of the mesh are bounded from below and above by
$\theta_\star$ and  $\theta^\star$ respectively:
\begin{equation}
  \label{eq:angle-cond}
  \forall ~ \theta \in \T^{-1},\quad 
  \theta_\star \le \theta \le \theta^\star.
\end{equation}
With that angle condition, the coefficients $(\vphia,\vphias)$ in (\ref{eq:coef-cotan}) 
are strictly positive.
With proposition \ref{prop:PG-disc} this ensures the existence and uniqueness for the 
solution $(u_\T, p_\T)$ 
of the mixed Petrov-Galerkin discrete problem (\ref{eq:PG-laplace}).
\begin{theorem}[Error estimates]
  \label{thm:conv-stab}
  We assume that $\Omega\subset \R^2$ is a bounded polygonal convex domain
  and that $f\in {\rm H}^1(\Omega)$.
  Under the angle hypotheses \eqref{eq:angle-cond0} and
  \eqref{eq:angle-cond},
  there exists a constant $C$ independent
  on $\T$ satisfying \eqref{eq:angle-cond} and independent on $f$
   so that the solution $(u_\T, p_\T)$ of the mixed
  Petrov-Galerkin discrete problem (\ref{eq:PG-laplace}) satisfies,
  \begin{displaymath}
    \Vert u_{\T} \Vert_0+\Vert p_{\T}\Vert_{\Hdiv}\leq 
    C\Vert f\Vert_{0}.
  \end{displaymath}
  Let $u$ be the exact solution to problem \eqref{eq:laplace} and  $p=\nabla  u$
  the gradient, the following error estimates  holds,
  \begin{equation}
    \label{eq:final-error}
    \Vert u-u_{\T} \Vert_0+\Vert p-p_{\T}\Vert_{\Hdiv}\leq 
    Ch_{\T}\Vert f\Vert_{1},
  \end{equation}
  with $h_\T$ the maximal size of the edges of the mesh.
\end{theorem}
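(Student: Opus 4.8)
The plan is to cast the proof in Babu\v{s}ka's framework for the Petrov--Galerkin bilinear form $Z$ of \eqref{eqdef-Z} on $V\times V^\star$, and to combine three ingredients: a \emph{uniform} inf--sup estimate, \emph{consistency} of the scheme, and standard \emph{approximation} properties of $P^0$ and $RT$. Equip $V=P^0\times RT$ with $\Vert(v,q)\Vert_V^2=\Vert v\Vert_0^2+\Vert q\Vert_{\Hdiv}^2$, and $V^\star=P^0\times RT^\star$ similarly. The cornerstone is the inf--sup condition: there is $\beta>0$ depending only on the angle bounds $\theta_\star,\theta^\star$ of \eqref{eq:angle-cond0}--\eqref{eq:angle-cond}, not on the particular mesh $\T$, with
\[
\inf_{(u,p)\in V}\;\sup_{(v,q)\in V^\star}\frac{Z\bigl((u,p),(v,q)\bigr)}{\Vert(u,p)\Vert_V\,\Vert(v,q)\Vert_{V^\star}}\;\ge\;\beta ,
\]
together with the non-degeneracy of $Z$ on the test side. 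I would quote this from the forthcoming stability result (Theorem~\ref{th:stabilite}) and its supporting estimates on the local dual RT mass matrix. Applied to \eqref{eq:PG-laplace}, whose right-hand side $(v,q)\mapsto-(f,v)_0$ has norm at most $\Vert f\Vert_0$, this at once gives the a priori bound $\Vert u_\T\Vert_0+\Vert p_\T\Vert_{\Hdiv}\le C\Vert f\Vert_0$; existence and uniqueness being already supplied by Proposition~\ref{prop:PG-disc} under the positivity of the coefficients $\ca$, which the angle condition provides through \eqref{eq:coef-cotan}.

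Next I would establish consistency. Since $\Omega$ is convex polygonal and $f\in{\rm H}^1(\Omega)\subset\ld$, elliptic regularity yields $u\in{\rm H}^2(\Omega)\cap\huo$ with $\Vert u\Vert_2\le C\Vert f\Vert_0$, so $p=\nabla u\in\Hdiv$ and $\div p=-f$. As $RT^\star\subset\Hdiv$, the Green formula shows that the exact pair $(u,p)$ satisfies both identities of \eqref{eq:PG-laplace} against every $(v,q)\in V^\star$; one uses here that $(f,v)_0=(f_\T,v)_0$ for $v\in P^0$, $f_\T$ being the ${\rm L}^2$-projection onto $P^0$. Subtracting the discrete equations gives the Galerkin orthogonality
\[
Z\bigl((u-u_\T,\,p-p_\T),(v,q)\bigr)=0\qquad\text{for all }(v,q)\in V^\star .
\]

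With orthogonality, the inf--sup condition yields Babu\v{s}ka quasi-optimality: for every $(v_\T,q_\T)\in V$,
\[
\Vert(u,p)-(u_\T,p_\T)\Vert_V\le\Bigl(1+\tfrac{M}{\beta}\Bigr)\,\Vert(u,p)-(v_\T,q_\T)\Vert_V ,
\]
with $M$ the continuity norm of $Z$, so it remains to bound the best-approximation error. I would take $v_\T$ the piecewise-constant projection of $u$, with $\Vert u-v_\T\Vert_0\le Ch_\T\Vert u\Vert_1$, and $q_\T=\Pi_{RT}p$ the Raviart--Thomas interpolant, with $\Vert p-\Pi_{RT}p\Vert_0\le Ch_\T\Vert p\Vert_1=Ch_\T\Vert u\Vert_2$. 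The divergence part is the only place where the ${\rm H}^1$ regularity of $f$ is genuinely needed: since $\div p_\T=-f_\T$ and $\div p=-f$, directly $\Vert\div(p-p_\T)\Vert_0=\Vert f-f_\T\Vert_0\le Ch_\T\vert f\vert_1$, and the same bound controls $\Vert\div(p-\Pi_{RT}p)\Vert_0$ through the commuting property $\div\Pi_{RT}=P_{P^0}\div$. Collecting these estimates and using $\Vert u\Vert_2\le C\Vert f\Vert_0\le C\Vert f\Vert_1$ yields the announced bound \eqref{eq:final-error}.

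The main obstacle is not this assembly but securing $\beta$ uniformly in $\T$: this is exactly where the angle assumption enters, guaranteeing that the local dual RT mass matrix has eigenvalues bounded above and below independently of the mesh, so that the norms on $RT$ and $RT^\star$ and the coupling in $Z$ are uniformly equivalent. I would therefore defer that spectral analysis to Theorem~\ref{th:stabilite} and the technical subsections, and here rely only on the clean algebraic identity $\div p_\T=-f_\T$ from Proposition~\ref{prop:PG-disc} to dispose of the $\Hdiv$-divergence contribution.
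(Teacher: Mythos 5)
Your proposal is correct and follows essentially the same route as the paper's proof: uniform inf-sup stability deferred to Theorem \ref{th:stabilite}, Babu\v{s}ka quasi-optimality with constant $1+M/\beta$, the $P^0$ and Raviart-Thomas interpolation estimates (with the commuting property $\Pi_0 \div = \div \Pi_{RT}$ handling the divergence term, which is exactly where $f\in{\rm H}^1(\Omega)$ is used), and elliptic regularity on the convex domain to convert $\Vert u\Vert_2$ into $\Vert f\Vert_0$. The only cosmetic differences are that you spell out consistency and Galerkin orthogonality explicitly where the paper simply invokes Babu\v{s}ka's theorem, and you additionally record the identity $\div p_\T = -f_\T$ from Proposition \ref{prop:PG-disc}, a harmless redundancy since the commuting-diagram bound already controls that contribution.
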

\begin{proof}
  We  prove that the unique solution of the mixed Petrov-Galerkin 
  (\ref{eq:PG-laplace}) 
  continuously depends  on the data $f$.
  The bilinear form $Z$ defined in \eqref{eqdef-Z} is continuous,
  with a continuity constant $M$ independent on the mesh $\T$,
  \begin{displaymath}
    | Z(\xi,\eta)  |  \,\,\leq\,\, M \,\, \Vert \xi \Vert_{{\rm L}^2 \times {\rm H}_{\div}}\, 
    \Vert \eta \Vert_{{\rm L}^2 \times {\rm H}_{\div}}\,,\qquad \forall \, \xi\in V,\,\eta\in V^{\star}. %, \,\forall\eta\in V.
  \end{displaymath}
  The following uniform inf-sup stability condition:
  there exists a constant $\beta>0$ independent on $\T$ such that,
  \begin{equation}
    \label{inf-sup-unif}
    \forall~ \xi \in V, \, {\rm so \,  that} \,\, \Vert \xi\Vert _{{\rm L}^2 \times {\rm H}_{\div}} = 1 ,\,  
    \exists~ \eta\in V^\star , \,\, \Vert \eta\Vert _{{\rm L}^2 \times {\rm H}_{\div}} \, \le \, 1 \,\, 
    {\rm and} \,\, Z(\xi,\eta)  \, \geq \,   \beta,
  \end{equation}
  is proven in   theorem \ref{th:stabilite} under some conditions.
  Moreover, the two spaces $V$ and $V^\star$ have the same dimension.
  Then the Babu\v{s}ka theorem in \cite{Ba71}, 
  also valid for Petrov-Galerkin
  mixed formulation, applies.
  The unique solution $\xi_{_{\cal T}}=(u_{_\T},p_{_\T})$ of the  discrete scheme 
  \eqref{eq:PG-laplace} satisfies the error estimates,
  and 
  \begin{displaymath}
    \Vert \xi - \xi_{_{\cal T}} \Vert_{{\rm L}^2 \times {\rm H}_{\div}} \,\,\,\leq\,\,\, \Big( 1+\frac{M}{\beta}
    \Bigr)
    \inf_{\zeta\in 
      % V_{_{\cal T}}
      V }
    \Vert \xi - \zeta \Vert_{{\rm L}^2 \times {\rm H}_{\div}} \,,
  \end{displaymath}
  with $\xi=(u,p)$, $u$ the exact solution to the Poisson problem (\ref{eq:laplace}) and $p=\nabla  u$.
  In our case, this formulation is equivalent to
  \begin{equation}  %%      \label{eq:err-est}
    \Vert u - u_{_{\cal T}}  \Vert _{0} \,+\,  \Vert p - p_{_{\cal T}}  \Vert_{\Hdiv}  
    \,\leq\,  C \,\Big( \inf_{ v \in P^0} \, 
    \Vert u-v\Vert_0 +\inf_{q\in RT}\Vert p-q \Vert_{\Hdiv} \Big)
  \end{equation}
  for a constant $C=1+\frac{M}{\beta}$ dependent of $\T$ only through the lowest
  and the highest angles $\theta_{\star}$ and $\theta^{\star}$.
  With the interpolation operators $\Pi_0:{\rm L}^2(\Omega)\rightarrow P^0$
  and $\Pi_{RT}:{\rm H}^1(\Omega)^2\rightarrow RT^0$
  \begin{displaymath}
    \Vert \, u - u_{_{\cal T}}  \, \Vert_{0} \,+\,  \Vert \, 
    p - p_{_{\cal T}}  \, \Vert_{\Hdiv}
    \,\leq\,  C \, \big(  \Vert \, u - \Pi_0   u \, \Vert_{0} 
    + \Vert \, p - \Pi_{\text{\tiny RT}}   p \, \Vert_{\Hdiv}  \big) .
  \end{displaymath}
    On the other hand we have the following interpolation errors:
    \begin{displaymath}
      \Vert \, u - \Pi_0   u \, \Vert_{0} 
      \, \leq \, C_1 h_{\cal T} \, \Vert u \, \Vert_{1}  , \,\quad
      %%% 
      \Vert \, p - \Pi_{\text{\tiny RT}}   p
      \, \Vert_{0} \, \leq C_2\, h_{\cal T} \, \Vert p \, \Vert _{1}  , \, \quad
      \Vert  \div \big( p - \Pi_{\text{\tiny RT}}   p \big)  \Vert_{0} 
      \leq  C_1 h_{\cal T}  \, \Vert \div p \, \Vert _{1}. 
    \end{displaymath}
    On the left, we have the Poincar\'e-Wirtinger inequality where the
    constant $C_1=1/\pi$ is independent on the mesh, due to \cite{payne-weinberger1960}.
    The third inequality is the same as the first one since
    $\Pi_0 \div p = \div \Pi_{\text{\tiny RT}}   p$.
    For the second inequality, the constant $C_2$
    has been proven  in \cite{acosta-duran-1999}
    to be dominated by $1/\sin \theta^\star$ with $\theta^\star$  the maximal angle of the mesh.
  \\
  Then ,
  \begin{displaymath}
    % \label{eq:inter-error}
    \Vert \, u - u_{_{\cal T}}  \, \Vert _{0} \,+\,  
    \Vert \, p - p_{_{\cal T}}  \, \Vert_{\Hdiv}  
    \,\leq\,  C \,  h_{\cal T} \,  \big( \,  \Vert u \, \Vert  _{1}
    + \Vert p \, \Vert  _{1} + \Vert \div p \, \Vert  _{0} \,  \big) \, ,
  \end{displaymath}
  with a constant $C$ only depending on the maximal angle
    $\theta^\star$.
  Since
  $ - \Delta u = f \,\,\, {\rm in} \, \Omega $,
  with $f\in {\rm H}^1(\Omega)$ and $\Omega$ convex, then
  $u\in {\rm H}^2(\Omega)$ and %\newline
  $\Vert u\Vert_2 \leq c \Vert f\Vert_0$.
  Moreover $p=\nabla u$ and $\div p =-f$ leads to 
  \begin{displaymath}
    \Vert \, u - u_{_{\cal T}}  \, \Vert _{0} \,+\,  \Vert \, p - p_{_{\cal T}}  \, \Vert_{\Hdiv}  
    \,\leq\,  C \,  h_{\cal T} \,  \big( 2 \Vert f \, \Vert_0
    + \Vert f \, \Vert  _{1}   \big) \, .
  \end{displaymath}
  Finally, we get 
  \begin{displaymath}
    \Vert \, u - u_{_{\T}}  \, \Vert _{0} \,+\,  \Vert \, p - p_{_{\cal T}}  \, \Vert_{\Hdiv}  
    \,\leq\,  C \,  h_{\T}  \, \Vert  \, f \, \Vert_1  \, ,
  \end{displaymath}
  that is exactly \eqref{eq:final-error}.
\end{proof}
\begin{theorem} [Abstract stability conditions]
  \label{th:stabilite}
  Assume that the projection $\Pi:~RT \rightarrow  RT^\star,$
  such that $\Pi \vphia= \vphias$ in diagram (\ref{eq:grad-discret}) satisfies, for any $p\in RT$:
  \begin{align}
    \label{eq:stab-cond-1}
    \tag{H1}
    (p,\Pi p)_0 \,&\geq\, A  \, \Vert p\Vert_0^2 ,
    \\[5pt]
    \label{eq:stab-cond-2}
    \tag{H2}
    \Vert \Pi p \Vert_{0}&\leq B  \,  \Vert p\Vert_0,
    \\[5pt]
    \label{eq:stab-cond-3}
    \tag{H3}
    (\div p, \div \Pi p)_0 \,&\geq\, C  \,  \Vert \div  p\Vert_0^2,
    \\[5pt]
    \label{eq:stab-cond-4}
    \tag{H4}
    \Vert \div \Pi p\Vert_0 \,&\leq\,  D \, \Vert \div  p\Vert_0
  \end{align}
  where $A, \,B,\, C,\,D>0$ are constants independent on $\T$.  
  Then the uniform discrete inf-sup condition (\ref{inf-sup-unif})
  holds:
  there exists a constant $\beta>0$ independent on $\T$ such that,
  \begin{displaymath}
    \forall~ \xi \in V, \, {\rm so \,  that} \,\, \Vert \xi\Vert _{{\rm L}^2 \times {\rm H}_{\div}} = 1 ,\,  
    \exists~ \eta\in V^\star , \,\, \Vert \eta\Vert _{{\rm L}^2 \times {\rm H}_{\div}} \, \le \, 1 \,\, 
    {\rm and} \,\, Z(\xi,\eta)  \, \geq \,   \beta.  
  \end{displaymath}
\end{theorem}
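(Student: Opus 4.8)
The plan is to recast the uniform inf-sup condition \eqref{inf-sup-unif} as a constructive statement: for every $\xi=(u,p)\in V$ I will exhibit a test pair $\eta=(v,q)\in V^\star$ with $Z(\xi,\eta)\ge c_0\Vert\xi\Vert_{{\rm L}^2\times{\rm H}_{\div}}^2$ and $\Vert\eta\Vert_{{\rm L}^2\times{\rm H}_{\div}}\le c_1\Vert\xi\Vert_{{\rm L}^2\times{\rm H}_{\div}}$, with $c_0,c_1>0$ independent of $\T$; the stated form then follows by rescaling $\eta$ and setting $\beta=c_0/c_1$. Recalling $\Vert\xi\Vert_{{\rm L}^2\times{\rm H}_{\div}}^2=\Vert u\Vert_0^2+\Vert p\Vert_0^2+\Vert\div p\Vert_0^2$, I build $\eta$ from three elementary probes, one per term: I set $v=\lambda\,\div p\in P^0$ and $q=\mu\,\Pi p+\nu\,\Pi r\in RT^\star$, where $\lambda,\mu,\nu>0$ are fixed later and $r\in RT$ is a suitable right-inverse of the divergence attached to $u$.

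The crux, and the only place where geometry enters, is the construction of $r$ with a mesh-independent bound. Since $\Omega$ is convex, the solution $\psi\in\huo$ of $-\Delta\psi=u$ lies in ${\rm H}^2(\Omega)$ with $\Vert\psi\Vert_2\le c_\Omega\Vert u\Vert_0$; setting $\sigma=-\nabla\psi\in{\rm H}^1(\Omega)^2$ gives $\div\sigma=u$ and $\Vert\sigma\Vert_1\le c_\Omega\Vert u\Vert_0$. Let $r=\Pi_{RT}\sigma\in RT$. The commuting-diagram property of the Raviart-Thomas interpolant yields $\div r=\Pi_0\div\sigma=\Pi_0u=u$ (as $u\in P^0$), and the interpolation estimate $\Vert\sigma-\Pi_{RT}\sigma\Vert_0\le C_2h_\T\Vert\sigma\Vert_1$ with $C_2\le 1/\sin\theta^\star$ uniform under the angle assumption \eqref{eq:angle-cond} (the same bound recalled after \eqref{eq:final-error}, from \cite{acosta-duran-1999}) gives $\Vert r\Vert_{\Hdiv}\le c'_\Omega\Vert u\Vert_0$ with $c'_\Omega$ depending only on $\Omega$ and $\theta^\star$. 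I am free to use the consistency identity \eqref{eq:RT*-3.2}, since here $\Pi$ is precisely the projection of diagram \eqref{eq:grad-discret}; this is what will convert divergences of $\Pi$-images back to true divergences.

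I then expand $Z(\xi,\eta)=(u,\div q)_0+(p,q)_0+(\div p,v)_0$. Applying \eqref{eq:RT*-3.2} twice gives $(u,\div\Pi p)_0=(u,\div p)_0$ and $(u,\div\Pi r)_0=(u,\div r)_0=\Vert u\Vert_0^2$, while \eqref{eq:stab-cond-1} gives $(p,\Pi p)_0\ge A\Vert p\Vert_0^2$; hence
\[
Z(\xi,\eta)\ge \nu\Vert u\Vert_0^2+\mu A\Vert p\Vert_0^2+\lambda\Vert\div p\Vert_0^2+\mu(u,\div p)_0+\nu(p,\Pi r)_0.
\]
The two cross terms are absorbed by Young's inequality, using $|(p,\Pi r)_0|\le B\,c'_\Omega\Vert p\Vert_0\Vert u\Vert_0$ from \eqref{eq:stab-cond-2} and $|(u,\div p)_0|\le\Vert u\Vert_0\Vert\div p\Vert_0$. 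A hierarchical choice of constants — fix $\mu=1$, take $\nu$ small enough that the $\Vert p\Vert_0^2$ defect is swallowed by $\mu A\Vert p\Vert_0^2$, then choose the Young weights and $\lambda$ large enough to absorb the residual $\Vert u\Vert_0^2$ and $\Vert\div p\Vert_0^2$ — renders all three coefficients strictly positive and mesh-independent, so $Z(\xi,\eta)\ge c_0\Vert\xi\Vert_{{\rm L}^2\times{\rm H}_{\div}}^2$. Finally \eqref{eq:stab-cond-2} and \eqref{eq:stab-cond-4} bound $\Vert q\Vert_{\Hdiv}\le\mu B\Vert p\Vert_0+\nu(Bc'_\Omega+D)\Vert u\Vert_0$ and $\Vert v\Vert_0=\lambda\Vert\div p\Vert_0$, giving $\Vert\eta\Vert_{{\rm L}^2\times{\rm H}_{\div}}\le c_1\Vert\xi\Vert_{{\rm L}^2\times{\rm H}_{\div}}$. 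I expect the parameter tuning to be purely routine; the genuine obstacle is the uniform right-inverse bound on $r$, which is exactly where convexity (for ${\rm H}^2$-regularity) and the angle condition (for interpolation stability) are indispensable. I note in passing that \eqref{eq:stab-cond-3} is not needed for this particular choice, since the divergence term is captured directly through $v=\div p\in P^0$; indeed it holds automatically by \eqref{eq:RT*-3.2}.
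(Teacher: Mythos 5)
Your argument is \emph{not} a proof of the theorem as stated, because it invokes the consistency identity \eqref{eq:RT*-3.2} twice --- once to write $(u,\div \Pi p)_0=(u,\div p)_0$ and, essentially, once to write $(u,\div \Pi r)_0=(u,\div r)_0=\Vert u\Vert_0^2$ --- and \eqref{eq:RT*-3.2} is not among the hypotheses \eqref{eq:stab-cond-1}--\eqref{eq:stab-cond-4}. That identity is a consequence of the flux normalisation \eqref{eq:RT*-3}, which belongs to definition \ref{def:RT} of a Raviart-Thomas dual basis, not to the diagram \eqref{eq:grad-discret}: the diagram only presupposes \eqref{eq:RT*-0} and the orthogonality \eqref{eq:RT*}. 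The theorem is deliberately abstract --- its hypotheses are the four inequalities and nothing else, and the paper's own proof in Annex \ref{sec:annex-1} uses nothing else; in the paper, \eqref{eq:RT*-3.2} enters only later, in section \ref{sec:preuve_thm}, precisely to verify that the specific dual basis satisfies \eqref{eq:stab-cond-3} with $C=1$. Your closing remark that \eqref{eq:stab-cond-3} ``is not needed since it holds automatically by \eqref{eq:RT*-3.2}'' makes the issue visible: you have silently replaced hypothesis \eqref{eq:stab-cond-3} by the strictly stronger, unavailable exact identity.

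The gap is local and repairable, and the repair is exactly the paper's move. Where you need coercivity in $u$, use that $\div r=u$ \emph{exactly} (commuting RT interpolation), so hypothesis \eqref{eq:stab-cond-3} gives $(u,\div \Pi r)_0=(\div r,\div \Pi r)_0\ge C\Vert \div r\Vert_0^2=C\Vert u\Vert_0^2$; this is precisely how the divergence-lifting proposition of Annex \ref{sec:annex-1} proceeds. For the other occurrence, hypothesis \eqref{eq:stab-cond-4} gives $\vert (u,\div \Pi p)_0\vert\le D\Vert u\Vert_0\Vert\div p\Vert_0$, which your Young absorption handles just as well (also fix the small slip in your bound for $\Vert q\Vert_{\Hdiv}$, which omits the contribution $\mu D\Vert\div p\Vert_0$ of $\div\Pi p$). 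With these substitutions your proof becomes correct, and it is then genuinely different from the paper's: you exhibit a single test pair $\eta=(\lambda\,\div p,\ \mu\,\Pi p+\nu\,\Pi r)$ and absorb the cross terms by Young's inequality, whereas the paper splits into three cases according to which of $\Vert\div p\Vert_0$, $\Vert p\Vert_0$, $\Vert u\Vert_0$ is large, uses a ``pure'' test function in each case, and tunes the thresholds $\alpha,\beta$ by hand. Your route is the more standard and streamlined one (the inf-sup constant falls out of routine parameter tuning); the paper's case analysis yields a more explicit value of $\beta$. Both rest on the same key ingredient, the uniform divergence lifting built from the ${\rm H}^2$-regular Poisson solve on the convex domain and the commuting Raviart-Thomas interpolant.
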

This result has been proposed 
by Dubois  in \cite{D02}. For the completeness 
of this contribution,
the proof (presented in the preprint  \cite{Du10})  
is detailed  in Annex A. %\cref{sec:annex-1}.

In order to prove the conditions \eqref{eq:stab-cond-1},
\eqref{eq:stab-cond-2},  \eqref{eq:stab-cond-3} and \eqref{eq:stab-cond-4},
one needs some technical lemmas on some estimations of the
dual basis functions so that theorem \ref{th:stabilite} holds.
It is the goal of the next subsections.

%% 
%%%%%%%%%%%%%%%%%%%%%%%%%%%%%%%%%%%%%%%%%%%%%%%%%%%%%%%% 
\subsection{A specific Raviart-Thomas dual basis}
%%%%%%%%%%%%%%%%%%%%%%%%%%%%%%%%%%%%%%%%%%%%%%%%%%%%%%%% 
\label{sec:def-drtb}
\subsubsection*{Choice of the divergence}
For $K$ a given triangle of $\T^2$, we propose
a choice for the divergence $\delta_K$ of the dual basis functions
$\vphisKi,\, 1\leq i\leq 3$ in
\eqref{eq:phi-star-def-1.1}.
We know from \eqref{eq:def-delta_K}
that this function has to be ${\rm L}^2(K)$-orthogonal to the three
following functions:
$ |x-W_{K,i}|^2$ for $i=$1, 2, 3
and that its integral over $K$ is equal to 1. 
We propose to choose $ \delta_K$ as the solution
of the least-square problem:
{\it minimise
  $\int_K \delta_K^2 \,\, {\rm d}x$
  with the constraints in \eqref{eq:def-delta_K}.}
It is well-known 
that the solution belongs to the four dimensional space
$E_K=\Span \left( \un_K, |x-W_{K,i}|^2,~1\le i\le 3\right)$
and is obtained by the inversion of an appropriate Gram matrix.

\begin{lemma}
  \label{lem:bound-delta_K}
  For the above construction of $\delta_K$, we have the following estimation:
  \begin{displaymath}  
    |K|  \int_K  \delta_K ^2 \dd x  \leq   \nu,
    \qquad  \text{with} \quad 
    \nu=\frac{8 \cdot 3^5 \cdot 23}{5}\,\,  {\frac{1}{\tan ^4\theta_\star}} .
  \end{displaymath} 
\end{lemma}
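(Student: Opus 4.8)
The plan is to exploit the variational definition of $\delta_K$: it minimises $\int_K\delta^2\dd x$ over all $\delta$ satisfying the four linear constraints \eqref{eq:def-delta_K}, and this minimum is attained in the four-dimensional space $E_K$. Hence the minimum value is read off the Gram matrix $G$ of the constraint functions $g_0=\un_K$ and $g_i=|x-W_{K,i}|^2$ ($i=1,2,3$): writing $\delta_K=\sum_j\mu_j g_j$ with $G\mu=(1,0,0,0)^{\mathsf T}$ gives $\int_K\delta_K^2\dd x=\mu^{\mathsf T}G\mu=\mu_0=(G^{-1})_{00}$. Equivalently $\int_K\delta_K^2\dd x=1/d^2$, where $d^2$ is the squared ${\rm L}^2(K)$-distance from $\un_K$ to $\Span(|x-W_{K,i}|^2,\,i=1,2,3)$, so the lemma reduces to the lower bound $d^2\ge|K|/\nu$. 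I would also note at the start that $|K|\int_K\delta_K^2\dd x$ is invariant under similarities of $K$ and so depends only on the three angles; this justifies normalising one side length and reducing the estimate to the angle variables.

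First I would compute the Gram entries exactly. Each is the integral over $K$ of a polynomial of degree at most four, so the barycentric identity $\int_K\lambda_1^a\lambda_2^b\lambda_3^c\dd x=2|K|\,a!\,b!\,c!/(a+b+c+2)!$ evaluates them all in closed form. Expanding $|x-W_{K,1}|^2=\ell_3^2\lambda_2^2+2\ell_2\ell_3\cos\theta_1\,\lambda_2\lambda_3+\ell_2^2\lambda_3^2$ in barycentric coordinates (and cyclically), with $\ell_k$ the side lengths and $\theta_k$ the angles, yields $G_{00}=|K|$, the vector $b_i=\int_K|x-W_{K,i}|^2\dd x$ and the $3\times3$ block $\tilde G_{ij}=\int_K|x-W_{K,i}|^2|x-W_{K,j}|^2\dd x$ as explicit polynomials in the $\ell_k^2$. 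Then $(G^{-1})_{00}=\det\tilde G/\det G=1/(G_{00}-b^{\mathsf T}\tilde G^{-1}b)$.

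To conclude I would pass from side lengths to angles via the law of sines $\ell_k=2R\sin\theta_k$ and $|K|=2R^2\sin\theta_1\sin\theta_2\sin\theta_3$, so that the circumradius $R$ cancels in $|K|/d^2$ exactly as the similarity invariance predicts. Inserting the angle bounds $\theta_\star\le\theta\le\theta^\star$ and using the positivity coming from $\cotan\theta_1+\cotan\theta_2=\sin(\theta_1+\theta_2)/(\sin\theta_1\sin\theta_2)$ then collapses the expression into a bound of the announced shape $C/\tan^4\theta_\star$, the fourth power reflecting the rate at which the Gram matrix degenerates as a triangle flattens; the explicit constant $8\cdot3^5\cdot23/5$ is assembled from the rational factorial factors ($1/30,1/180,1/360,\dots$) delivered by the quadrature identity.

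The main obstacle is this final estimate, since $d^2=G_{00}-b^{\mathsf T}\tilde G^{-1}b$ is a difference of two quantities of size $|K|$ that nearly cancel, and that cancellation must be tracked precisely to recover the genuine $1/\tan^4\theta_\star$ degeneracy rather than a spuriously weaker rate. The clean way to avoid the cancellation, and the route I would actually write, is to drop the subtraction and instead exhibit a single explicit competitor $\tilde\delta_K\in E_K$ meeting the constraints \eqref{eq:def-delta_K}: because $\delta_K$ is the minimiser, $\int_K\delta_K^2\dd x\le\int_K\tilde\delta_K^2\dd x$, and one well-chosen feasible function — whose low-order moments are pinned by \eqref{eq:def-delta_K} — furnishes the upper bound $\nu/|K|$ directly, with no near-cancellation left to control.
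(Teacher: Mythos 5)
There is a genuine gap, and it sits exactly where you yourself locate the main difficulty. Your first route (compute the Gram matrix of $\un_K$ and $|x-W_{K,i}|^2$ exactly, then bound $\int_K\delta_K^2\dd x=(G^{-1})_{00}$) is in spirit the paper's own proof: the paper solves the $4\times4$ system with a computer algebra system, obtains $|K|\int_K\delta_K^2\dd x=\frac{1}{128}\,N/(|K|^4D)$ with $N$, $D$ explicit symmetric polynomials of degree $12$ and $4$ in the side lengths, and then does the real work, namely the inequalities $D\ge\frac{5}{12}\sigma_2^2$ and $N\le 23\,\sigma_2^6$ (obtained by careful symmetric-function manipulations that manage precisely the near-cancellation you worry about, since $D$ itself is a difference of positive terms), followed by the conversion $\sigma_2/|K|\le 12/\tan\theta_\star$ via the gyration-radius bound of lemma \ref{lem:rhoK-K}. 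You never carry out any of this final estimate -- neither the bound on the Schur complement nor the passage from side lengths to the angle $\theta_\star$ -- so the rate $1/\tan^4\theta_\star$ and the constant $8\cdot3^5\cdot23/5$ are never derived; they are the entire content of the lemma.

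Worse, the escape route you propose instead is circular. Since $E_K=\Span(\un_K,\,|x-W_{K,i}|^2)$ is four-dimensional and the four linear constraints \eqref{eq:def-delta_K} are represented on $E_K$ by the (invertible) Gram matrix $G$, the feasible set inside $E_K$ is the single point $\{\delta_K\}$: the ``explicit competitor $\tilde\delta_K\in E_K$ meeting the constraints'' can only be $\delta_K$ itself, and the minimality inequality $\int_K\delta_K^2\dd x\le\int_K\tilde\delta_K^2\dd x$ then says nothing. A competitor argument is not absurd in principle -- the minimiser over all of ${\rm L}^2(K)$ lies in $E_K$, so any feasible function in ${\rm L}^2(K)$, necessarily outside $E_K$, would give an upper bound -- but you would then have to construct such a function explicitly, verify the three orthogonality conditions and the unit-mass condition on a general triangle, and estimate its ${\rm L}^2$ norm sharply enough to recover the $1/\tan^4\theta_\star$ degeneracy. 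None of that is done, so the proposal does not prove the lemma by either route.
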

The proof of this result is  technical and has been obtained
with the help of a formal calculus software. It  is detailed in Annex C.
\\[10pt]
\subsubsection*{Choice of the flux on the boundary of the triangle}
A continuous function $g:(0,1)\rightarrow \R$ satisfying the conditions
\eqref{eq:def-g} %and
% $g(0)=g(1)=0$
can be chosen  as the following polynomial:
\begin{equation}
  \label{eq:g}
  g(s) = 30 s \, (s-1) \, (21 s^2- 21 s + 4). 
\end{equation}
\subsubsection*{Construction of the Raviart-Thomas dual basis}
%%%%%%%%%%%%%%%%%%%%%%%%%%%%%%%%%%%%%%%%%%%%%%%%% 
\begin{figure}[!ht]
  \begin{center}
    \includegraphics[height=4cm] {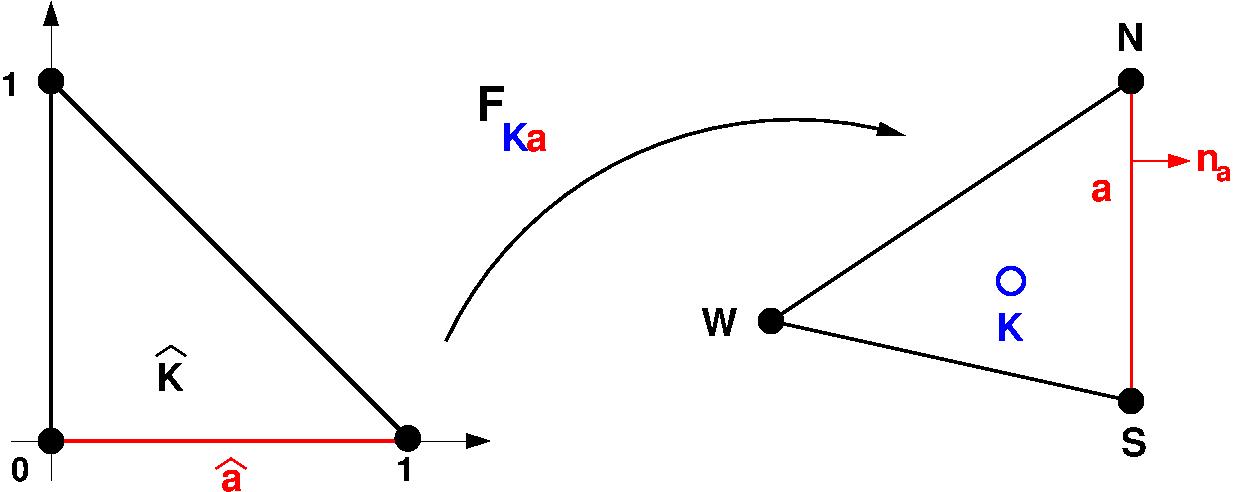}
  \end{center}
  \caption{Affine mapping  $ \, F_{K,a} \, $ between the reference
    triangle $ \, \hat K \, $  and the given triangle $ \, K $. }
  \label{fig:cell-3}
\end{figure}
For a triangle $K$ and an edge $a$ of $K$, we construct now a possible
choice of the dual function
$\vphiasK$ satisfying
\eqref{eq:phi-star-def-1.0}, \eqref{eq:phi-star-def-1.1}
and \eqref{eq:phi-star-def-1.2}.
Let $ \, F_{K,a} \,$ be an  affine function  that maps the reference triangle
$\hat K$ into the triangle $ \, K \, $ such that 
the edge $ \, \hat a \equiv [0,1]\times\{0\} \, $
is transformed into the given edge 
$ \, a \subset \partial K$. Then the mapping 
$ \, \,  \widehat{K} \ni \widehat{x} \longmapsto x =  F_{K,a}(\widehat{x}) \in K $
is one to one. 
We define  $ x=F_{K,a}(\hat x)$ for any $\hat x \in \hat K$ and the 
right hand side 
$ \,\, \widetilde \delta_K(\hat x) = 2 \, |K| \, \delta_K(x)$.
With $g$ defined in \eqref{eq:g},
let us define $ \, \widehat g\in {\rm H}^{1/2}(\partial \hat K) \, $
according to

\smallskip \smallskip \centerline { $ \displaystyle  %%%  \begin{displaymath}%{cc}
  \widehat g:=\left\lbrace
    \begin{array}{ll}
      g \text{ on } \hat a=[0,1]\times\{0\}\\
      0 \text{ elsewhere on }  \partial \hat K\,.
    \end{array}
  \right. $} 

\smallskip \noindent %%   \end{displaymath}
Since \quad $ \displaystyle \int_{\widehat{K}}
\widetilde {\delta_K}  \,\, {\rm d}x \,= 1
\, = \, \int_{\partial \hat{K}}    \widehat{g} \,\, {\rm d}\gamma   $,
the  inhomogeneous Neumann problem
\begin{equation}
  \label{eq:zeta-Neumann}
  \Delta \zeta_K= \widetilde \delta_K  \,\, \text{ in } \hat K \,, \quad 
  \frac{\partial \zeta_K}{\partial n} = \widehat g \,\,   \text{ on } \partial \hat K 
\end{equation}
is well posed.
The dual function $ \, \varphi_{K,a}^\star \, $ is defined according to 
\begin{equation}
  \label{eq:defduale}
  \displaystyle 
  \vphiasK (x) = {\frac{1}{ \det ( {\rm d}F_{K,a} ) }} \, {\rm d}F_{K,a}
  \,%\smb
  \,
  \widehat {\nabla} \zeta_{K}  \, . 
\end{equation} 
These so-defined functions satisfy the hypotheses
\eqref{eq:phi-star-def-1.0}, \eqref{eq:phi-star-def-1.1} and
\eqref{eq:phi-star-def-1.2}
of theorem \ref{thm:VF4}.
Let us now  estimate their ${\rm L}^2$-norm.

\subsubsection*{${\rm L}^2$-norm of the Raviart-Thomas  dual basis }
\label{LocaldualRTmassmatrix}
An upper bound on the ${\rm L}^2$ norm of the Raviart-Thomas  dual basis
will be needed in order to prove the stability conditions in theorem \ref{th:stabilite}. 
This bound is given in lemma \ref{lem:gram-RT-star}.
It only involves the mesh minimal angle $\theta_\star$.
\begin{lemma}
  \label{lem:L2-estimation-RT-star}
  For $K\in \T^2$ and $a\in \T^1$, $ \, a  \subset \partial K$, we have

  \smallskip \centerline { $ \displaystyle  %%%  \begin{displaymath}%{cc}
    \Vert \varphi  _{K,\, a}^\star \Vert_{0 \, K}\, \leq \, \mu^\star  $} 

  \smallskip \noindent 
  where $\mu^\star$ is essentially a function of
  the smallest angle $\theta_{\star}$
  of the triangulation.
\end{lemma}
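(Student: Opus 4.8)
The plan is to pull everything back to the fixed reference triangle $\hat K$ through the Piola transform \eqref{eq:defduale} and to use the scale invariance of $\Vert\varphi^\star_{K,a}\Vert_{0,K}$, so that only the shape of $K$ — quantified by the smallest angle $\theta_\star$ — survives in the bound. Write $J=\mathrm{d}F_{K,a}$ for the constant Jacobian of the affine map; since $\int_{\hat K}\widetilde\delta_K\dd{\hat x}=1$ forces $|\det J|=2|K|$, the change of variables $x=F_{K,a}(\hat x)$ applied to \eqref{eq:defduale} gives
\begin{equation*}
  \Vert \varphi^\star_{K,a}\Vert_{0,K}^2
  = \frac{1}{|\det J|}\int_{\hat K}\bigl| J\,\widehat\nabla\zeta_K\bigr|^2 \dd{\hat x}
  \;\le\; \frac{\Vert J\Vert^2}{|\det J|}\,\Vert \widehat\nabla\zeta_K\Vert_{0,\hat K}^2 ,
\end{equation*}
with $\Vert J\Vert$ the spectral norm. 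It then remains to control, separately, the Dirichlet energy $\Vert\widehat\nabla\zeta_K\Vert_{0,\hat K}$ of the reference Neumann potential and the purely geometric factor $\Vert J\Vert^2/|\det J|$.

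For the energy, normalise $\zeta_K$ to have zero mean on $\hat K$ (this does not change its gradient) and test the weak form of \eqref{eq:zeta-Neumann} against $\zeta_K$ itself:
\begin{equation*}
  \Vert\widehat\nabla\zeta_K\Vert_{0,\hat K}^2
  = -\int_{\hat K}\widetilde\delta_K\,\zeta_K\dd{\hat x}
  + \int_{\partial\hat K}\widehat g\,\zeta_K\dd\gamma .
\end{equation*}
Cauchy--Schwarz, the Poincar\'e--Wirtinger inequality and the trace inequality on the \emph{fixed} triangle $\hat K$ then yield $\Vert\widehat\nabla\zeta_K\Vert_{0,\hat K}\le C_{\hat K}\bigl(\Vert\widetilde\delta_K\Vert_{0,\hat K}+\Vert\widehat g\Vert_{L^2(\partial\hat K)}\bigr)$, where $C_{\hat K}$ is a universal constant (it depends only on $\hat K$). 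The data are now bounded independently of the size of $K$: the boundary datum $\widehat g$ equals the fixed polynomial $g$ of \eqref{eq:g}, so $\Vert\widehat g\Vert_{L^2(\partial\hat K)}^2=\int_0^1 g(s)^2\dd s$ is a pure number, while the change of variables together with Lemma \ref{lem:bound-delta_K} gives
\begin{equation*}
  \Vert\widetilde\delta_K\Vert_{0,\hat K}^2
  = \frac{4|K|^2}{|\det J|}\int_K\delta_K^2\dd x
  = 2\,|K|\int_K\delta_K^2\dd x \;\le\; 2\nu ,
\end{equation*}
with $\nu$ a function of $\theta_\star$. Hence $\Vert\widehat\nabla\zeta_K\Vert_{0,\hat K}$ is bounded by a function of $\theta_\star$ alone.

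Finally, the factor $\Vert J\Vert^2/|\det J|$ is scale invariant, since both $\Vert J\Vert^2$ and $|\det J|=2|K|$ scale like the square of the diameter of $K$; thus it depends only on the shape of $K$. The columns of $J$ being the edge vectors of $K$, one has $\Vert J\Vert^2\le \Vert J\Vert_F^2=\ell_1^2+\ell_2^2+\ell_3^2$ (sum of squared edge lengths), and by elementary trigonometry — the law of sines together with $|K|=\tfrac12\,\ell_i\ell_j\sin\theta_k$ — the ratio $(\ell_1^2+\ell_2^2+\ell_3^2)/|K|$ is bounded by a function of the minimal angle $\theta_\star$ (note that $\theta\ge\theta_\star$ for every angle forces $\theta\le\pi-2\theta_\star<\pi$, so all angles stay away from $0$ and $\pi$). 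Collecting the three estimates gives $\Vert\varphi^\star_{K,a}\Vert_{0,K}\le\mu^\star$ with $\mu^\star$ a function of $\theta_\star$. The only genuinely delicate point is the scaling bookkeeping: one must check that the size of $K$ cancels throughout — so that the reference constant $C_{\hat K}$, the aspect-ratio bound and Lemma \ref{lem:bound-delta_K} combine into a quantity depending on $\theta_\star$ only — rather than attempt any new hard estimate.
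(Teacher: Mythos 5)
Your proposal is correct, and its overall architecture coincides with the paper's: pull $\varphi^\star_{K,a}$ back to the reference triangle through \eqref{eq:defduale}, bound $\Vert\widetilde\delta_K\Vert_{0,\hat K}^2\le 2\nu$ by the change of variables and Lemma \ref{lem:bound-delta_K}, control the geometric factor $\Vert \mathrm{d}F_{K,a}\Vert^2/|\det \mathrm{d}F_{K,a}|$ by the minimal angle, and multiply the three bounds. The one genuine difference is the analytic ingredient used to control $\Vert\widehat\nabla\zeta_K\Vert_{0,\hat K}$. The paper invokes the Agmon--Douglis--Nirenberg elliptic regularity theory on the convex reference triangle, obtaining the full $H^2$ estimate $\Vert\zeta_K\Vert_{2,\hat K}\le C_{\hat K}\bigl(\Vert\widetilde\delta_K\Vert_{0,\hat K}+\Vert\widehat g\Vert_{1/2,\partial\hat K}\bigr)$, of which only the gradient part is used, and which requires $\widehat g\in H^{1/2}(\partial\hat K)$. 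You instead test the weak formulation of \eqref{eq:zeta-Neumann} with the zero-mean solution itself and conclude by Cauchy--Schwarz, Poincar\'e--Wirtinger and the trace inequality on the fixed triangle $\hat K$; this is strictly more elementary, needs only $\widehat g\in L^2(\partial\hat K)$, and replaces the $H^{1/2}$ norm by the explicitly computable number $\int_0^1 g(s)^2\,\mathrm{d}s$. What you lose is nothing for the present purpose (only the $H^1$ seminorm is needed); what you gain is that no black-box regularity theorem is required and the constant $C_{\hat K}$ is in principle traceable. Your treatment of the geometric factor (Frobenius norm of the Jacobian plus the law of sines, yielding $(\ell_1^2+\ell_2^2+\ell_3^2)/|K|$ bounded by a function of $\theta_\star$) replaces the paper's quoted bound $\Vert \mathrm{d}F_{K,a}\Vert^2\le 8|K|/\sin\theta_\star$; both give scale-invariant shape-dependent constants, and yours has the merit of being proved rather than asserted.
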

\begin{proof}
  Since the reference triangle $ \, \hat K \, $ is convex
  and  $ \, \widehat g \in {\rm H}^{1/2}(\partial \widehat{K}) $, the 
  solution  $  \zeta_{K} $  of the  Neumann problem \eqref{eq:zeta-Neumann}
  satisfies the regularity property  (see for example \cite{ADN59})
  $ \, \zeta_{K} \in {\rm H}^2 (\widehat{K}) $, 
  continuously to the data:

  \smallskip \centerline { $ \displaystyle  %%%  \begin{displaymath} 
    \displaystyle \Vert \zeta_{K} \Vert_{2,\hat K} \, \leq \, C_{\widehat{K}} \, 
    \Big( \,   \Vert \widetilde{ \delta_K } \Vert _{0,\widehat{K}} 
    +  \Vert \widehat{g}  \Vert_{1/2 ,\, \partial \widehat{K}} \, \Big)  \, .  $} 

  \smallskip \noindent %%   \end{displaymath}
  Moreover thanks to lemma \ref{lem:bound-delta_K},

  \smallskip \centerline { $ \displaystyle  %%%  \begin{displaymath} 
    \Vert \widetilde{ \delta_K } \Vert_{0,\hat K}^2   
    = \int_{\widehat{K}}  \widetilde{ \delta_K }^2 \,  {\rm d}  \widehat{x} 
    = \int_K \, (2|K|\delta_K)^2 \, \frac{1}{ \det ( {\rm d}F_{K,a} ) } \dd x
    = 2 \, |K|\int_K \delta_K^2  \, {\rm d}x \leq \,2\, \nu  $} 

  \smallskip \noindent %%   \end{displaymath}
  and then

  \smallskip \centerline { $ \displaystyle  %%%  \begin{displaymath} 
    \Vert \widehat {\nabla } \zeta_{K} \Vert_{0,\hat K} \, \leq \,
    C_{\widehat{K}} \, 
    \Big( \,  \sqrt{2\nu} +  \Vert \widehat{g}  \Vert_{1/2 ,\, \partial \widehat{K}} \, \Big)  \, .   $} 

  \smallskip \noindent %%   \end{displaymath}
  Since the dual function $\vphiasK$
  is defined by \eqref{eq:defduale} and
  $\Vert {\rm d} F_{K,a}\Vert ^2 \leq \frac{8|K|}{\sin \theta_{\star}}$ from direct geometrical computations on the triangle $K$, we obtain
  \begin{displaymath} 
   \Vert \vphiasK \Vert_{0, K}^2   \leq  
      \displaystyle \Big( {\frac{1}{2 \, |K|}} \Big)^2  \, \Big( {\frac{8 \, |K|}{\sin \, \theta_\star }} \Big)  \, 
      \Vert \widehat {\nabla } \zeta_{K} \Vert _{0,\hat K}^2 \, ( 2 \, |K| )\,.
  \end{displaymath} 
  Then
  $\displaystyle  \Vert \vphiasK  \Vert_{0, K}^2     \leq   \displaystyle  (\mu^\star) ^2\,,$
  with $\displaystyle (\mu^\star) ^2=
  {\frac{4}{\sin \, \theta_\star }} \, \, C_{\widehat{K}} ^2\, 
  \Big( \,  \sqrt{2\nu} +  \Vert \widehat{g}  \Vert_{1/2 ,\, \partial \widehat{K}} \, \Big)^2 \, . $
\end{proof}
%%%%%%%%%%%%%%%%%%%%%%%%%% fin de la preuve du lemme 5 

%%%%%%%%%%%% lemme 6
\begin{lemma}
  \label{lem:gram-RT-star}
  For $K\in \T^2$ and $q\in RT^\star$:
  \begin{displaymath}
    \Vert  \Pi q \Vert_{0,K}^2
    \le 3(\mu^{\star})^2 \sum_{i=1}^3 q_{K,i}^2  \,\,.    
  \end{displaymath}
\end{lemma}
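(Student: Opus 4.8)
The plan is to reduce everything, on each fixed triangle $K$, to the three local dual basis functions supported there, and then to combine the triangle inequality with the pointwise bound already established in Lemma \ref{lem:L2-estimation-RT-star}. First I would write the argument $q$ in its local Raviart-Thomas decomposition $q=\sum_{K'\in\T^2}\sum_{i=1}^3 q_{K',i}\,\vphi_{K',i}$. Since the projection $\Pi$ sends each edge basis function to its dual and the two families obey the same edge-to-element correspondence (\eqref{eq:loc-basis-RT} and \eqref{eq:loc-basis}), linearity gives $\Pi q=\sum_{K'\in\T^2}\sum_{i=1}^3 q_{K',i}\,\varphi^\star_{K',i}$. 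By the localization hypothesis \eqref{eq:phi-star-def-1.0}, namely $\supp\varphi^\star_{K',i}\subset K'$, the only terms of this sum that do not vanish on the fixed triangle $K$ are those attached to $K$ itself, so that $(\Pi q)|_{K}=\sum_{i=1}^3 q_{K,i}\,\vphisKi$ on $K$.

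Next I would apply the triangle inequality in ${\rm L}^2(K)$ followed by the uniform bound $\Vert\vphisKi\Vert_{0,K}\le\mu^\star$ of Lemma \ref{lem:L2-estimation-RT-star}, and then square and use the elementary Cauchy--Schwarz inequality $\bigl(\sum_{i=1}^3|q_{K,i}|\bigr)^2\le 3\sum_{i=1}^3 q_{K,i}^2$:
\begin{displaymath}
  \Vert\Pi q\Vert_{0,K}\,\le\,\sum_{i=1}^3|q_{K,i}|\,\Vert\vphisKi\Vert_{0,K}\,\le\,\mu^\star\sum_{i=1}^3|q_{K,i}|,\qquad\text{hence}\qquad\Vert\Pi q\Vert_{0,K}^2\le 3(\mu^\star)^2\sum_{i=1}^3 q_{K,i}^2 .
\end{displaymath}
This is exactly the claimed estimate, with the factor $3$ arising from the number of edges of a triangle.

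The whole content of this lemma is carried by the pointwise estimate of Lemma \ref{lem:L2-estimation-RT-star}; once that is in hand there is no genuine obstacle, and the only points demanding care are the bookkeeping of which local dual functions survive on $K$ (guaranteed by \eqref{eq:phi-star-def-1.0}) and the correct constant in the Cauchy--Schwarz step. I would emphasize that the estimate is deliberately phrased per element: summing over $K\in\T^2$ then yields the global bound $\Vert\Pi q\Vert_0^2\le 3(\mu^\star)^2\sum_{K,i}q_{K,i}^2$, which, once combined with a lower bound on the local Raviart-Thomas mass matrix, is precisely the form needed to verify the continuity condition (H2) of Theorem \ref{th:stabilite}.
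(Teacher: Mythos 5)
Your proof is correct and follows essentially the same route as the paper's: the local decomposition $(\Pi q)|_K=\sum_{i=1}^3 q_{K,i}\,\vphi^\star_{K,i}$, the triangle inequality in ${\rm L}^2(K)$, the Cauchy--Schwarz bound producing the factor $3$, and the uniform estimate $\Vert\vphi^\star_{K,i}\Vert_{0,K}\le\mu^\star$ of Lemma \ref{lem:L2-estimation-RT-star}. The only differences are cosmetic (you apply the $\mu^\star$ bound before Cauchy--Schwarz rather than after, and you spell out the localisation via \eqref{eq:phi-star-def-1.0} that the paper leaves implicit), so no further comment is needed.
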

\begin{proof}
  We have for a triangle $K$,
  $\di\Pi q= \sum_{i=1}^3 q_{K,i}\vphi^\star_{K,i} $,
    and so
    \begin{equation*}
      \Vert  \Pi q \Vert_{0,K}^2
      \le
      \Big(
      \sum_{i=1}^3 |q_{K,i}|\, \Vert \vphi^\star_{K,i}\Vert _{0,K}
      \Big)^2
      \le
      \sum_{i=1}^3  |q_{K,i}|^2
      \sum_{i=1}^3  \Vert \vphi^\star_{K,i}\Vert _{0,K}^2
       \end{equation*}
    Then lemma \ref{lem:L2-estimation-RT-star} applies and:
    $\di
    \Vert  \Pi q \Vert_{0,K}^2
       \leq 3(\mu^\star)^2\sum_{i=1}^3 q_{K,i}^2\,.
    $
  \end{proof}
%% 
%%%%%%%%%%%%%%%%%%%%%%%%%%%%%%%%%%%%%%%%%%%%%%%%%%%%%%%% sous-section 4.2
\subsection{Local Raviart-Thomas  mass matrix} %
\label{LocalRTMassMatrix}
The proof of the stability conditions in theorem  \ref{th:stabilite} involves
lower and upper bounds of the eigenvalues of the local Raviart-Thomas
mass matrix. We will need the following result proved in Annex B.

\begin{lemma}
  \label{lem:gram-RT}
  For $p\in RT$ and $K\in \T^2$:

  \smallskip \centerline { $ \displaystyle  %%%  \begin{displaymath} 
    \lambda_\star \,\sum_{i=1}^3 p_{K,i}^2\,\le\,
    \Vert  p \Vert_{0,K}^2\,
    \le \,\lambda^\star \,\sum_{i=1}^3 p_{K,i}^2, $} 

  \smallskip \noindent %%   \end{displaymath} 
  for two constants $\lambda_\star$ and $\lambda^\star$ only depending
  on $\theta_{\star}$ in (\ref{eq:angle-cond0}),

  \smallskip \centerline { $ \displaystyle  %%%  \begin{displaymath} 
    \lambda_\star=   \frac{\tan ^2\theta_{\star}}{48},\quad
    \lambda^\star=\frac{5}{4 \tan \theta_{\star}}. $} 
\end{lemma}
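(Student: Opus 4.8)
The plan is to convert the statement into a spectral bound for the $3\times 3$ local mass matrix and then read off its eigenvalues from the geometry of $K$. From \eqref{eq:phi_K-i} one has on $K$ the explicit formula $\vphi_{K,i}(x)=\frac{1}{2|K|}(x-W_{K,i})$, so that for $p=\sum_{i=1}^3 p_{K,i}\vphi_{K,i}$ we have $\Vert p\Vert_{0,K}^2=\sum_{i,j}p_{K,i}p_{K,j}M_{ij}$ with $M_{ij}=(\vphi_{K,i},\vphi_{K,j})_{0,K}$. The asserted inequalities are exactly $\lambda_\star\le\lambda_{\min}(M)$ and $\lambda_{\max}(M)\le\lambda^\star$, so it suffices to put $M$ in closed form and estimate its spectrum. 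Expanding $x-W_{K,i}$ in barycentric coordinates and using the moment formulas $\int_K\lambda_k^2\dx=|K|/6$ and $\int_K\lambda_k\lambda_l\dx=|K|/12$ for $k\neq l$, a short computation gives, with $v_i:=G-W_{K,i}$ ($G$ the centroid, so $\sum_i v_i=0$) and $T:=\sum_i|v_i|^2$,
\begin{equation*}
\Vert p\Vert_{0,K}^2=\frac{1}{48|K|}\Bigl(12\,\Bigl|\sum_{i=1}^3 p_{K,i}v_i\Bigr|^2+T\Bigl(\sum_{i=1}^3 p_{K,i}\Bigr)^2\Bigr).
\end{equation*}
The two geometric inputs I would rely on are the cotangent identity $T/|K|=\tfrac43\sum_i\cotan\theta_{K,i}$ (from $a^2+b^2+c^2=4|K|\sum_i\cotan\theta_{K,i}$) and, since all angles lie in $(0,\pi/2)$ and $\cotan$ is decreasing, the bound $\sum_i\cotan\theta_{K,i}\le 3\cotan\theta_\star$.

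For the upper bound I would apply Cauchy--Schwarz directly to the displayed identity: $|\sum_i p_{K,i}v_i|^2\le T\sum_i p_{K,i}^2$ and $(\sum_i p_{K,i})^2\le 3\sum_i p_{K,i}^2$, whence $\Vert p\Vert_{0,K}^2\le\frac{15T}{48|K|}\sum_i p_{K,i}^2=\frac{5T}{16|K|}\sum_i p_{K,i}^2$. Inserting $T/|K|\le 4\cotan\theta_\star$ yields precisely $\lambda^\star=\frac{5}{4\tan\theta_\star}$.

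For the lower bound I would diagonalize the quadratic form. Since $\mathbf 1:=(1,1,1)$ spans the kernel of the $2\times3$ matrix $V=[v_1,v_2,v_3]$ (because $V\mathbf 1=\sum_i v_i=0$), the form splits: on $\mathrm{span}(\mathbf 1)$ it equals $3T$, and on $\mathbf 1^\perp$ it equals $12$ times the form of $VV^\top=\sum_i v_iv_i^\top$, whose eigenvalues $\mu_{\min}\le\mu_{\max}$ satisfy $\mu_{\min}+\mu_{\max}=T$, hence $\mu_{\max}\le T$. Thus $48|K|\,\lambda_{\min}(M)=\min(3T,12\mu_{\min})$. To control $\mu_{\min}$ from below I would use the Cauchy--Binet identity $\det(VV^\top)=\sum_{i<j}(v_i\wedge v_j)^2$ together with the fact that the centroid cevians split $K$ into three triangles of equal area $|K|/3$, so that $|v_i\wedge v_j|=2|K|/3$ and therefore $\det(VV^\top)=\tfrac43|K|^2$. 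Combined with $\mu_{\max}\le T$ this gives $\mu_{\min}\ge\frac{4|K|^2}{3T}$, whence $12\mu_{\min}/|K|\ge 16|K|/T\ge 4\tan\theta_\star$ by the cotangent bound; since also $3T/|K|=4\sum_i\cotan\theta_{K,i}\ge 4\sqrt3$, and $\tan\theta_\star\le\sqrt3$ because the smallest angle of a triangle is at most $\pi/3$, one gets $48|K|\,\lambda_{\min}(M)\ge|K|\tan^2\theta_\star$, i.e.\ the claimed $\lambda_\star=\frac{\tan^2\theta_\star}{48}$ (in fact with room to spare).

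The main obstacle is the lower bound, and precisely the lower estimate of $\mu_{\min}$: the form degenerates in the directions where $\sum_i p_{K,i}v_i$ is small, so a crude norm bound does not suffice and one genuinely needs the \emph{exact} value $\det(VV^\top)=\tfrac43|K|^2$. The cevian equal-area computation yielding $|v_i\wedge v_j|=2|K|/3$ is the decisive geometric step; the upper bound and the cotangent identity are then routine.
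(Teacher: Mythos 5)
Your proof is correct, and its decisive part --- the lower bound --- takes a genuinely different route from the paper's (Annex B). The two upper bounds are the same estimate in different clothing: your Cauchy--Schwarz bound $\frac{15\,T}{48|K|}$ coincides with the paper's trace bound $\tr(G_K)=\frac{15}{4}\,\rho_K^2/|K|$, since $T=\sum_i|G-W_{K,i}|^2=12\,\rho_K^2$. For the lower bound, however, the paper works entrywise with the local Gram matrix $G_K$: it imports the Baranger--Maitre--Oudin formulas \eqref{eq:norme_phi} for $(\vphi_{K,i},\vphi_{K,j})_{0,K}$ in terms of cotangents and the gyration radius, and bounds the smallest root of the characteristic polynomial by $\lambda_{\min}\geq \det(G_K)/R$, which requires computing $\det(G_K)=\rho_K^2/(16|K|)$ (via a polytope-volume argument) and the sum of principal $2\times 2$ minors $R=\frac{1}{12}+\frac{9}{4}\,\rho_K^4/|K|^2$ (via the identity $\sum_i \cotan\theta_i\,\cotan\theta_{i+1}=1$ from \eqref{cotan-rho} and \eqref{eq:norme_phi}). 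You instead diagonalize the mass matrix exactly: writing $48|K|\,G_K=12\,V^\top V+T\,\mathbf{1}\mathbf{1}^\top$, where $V$ is the $2\times 3$ matrix with columns $v_i=G-W_{K,i}$ and $V\mathbf{1}=0$, the spectrum is exactly $\bigl\{3T,\,12\mu_{\min},\,12\mu_{\max}\bigr\}$, and Cauchy--Binet together with the equal-area property of the centroid cevians gives $\mu_{\min}\mu_{\max}=\det(VV^\top)=\frac{4}{3}|K|^2$ and $\mu_{\min}+\mu_{\max}=T$, hence $\mu_{\min}\geq \frac{4|K|^2}{3T}$; the conclusion then follows from the same elementary facts both proofs invoke ($\cotan\theta_{K,i}\leq\cotan\theta_\star$ and $\theta_\star\leq\pi/3$). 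Your route buys self-containedness --- no entrywise Gram computations from \cite{BMO96}, no trigonometric identity --- and it yields the exact spectrum of $G_K$ in geometric terms, which is sharper structural information; indeed your determinant identity is equivalent to the paper's, since $3T\cdot 144\,\mu_{\min}\mu_{\max}=(48|K|)^3\det G_K$ recovers $\det G_K=\rho_K^2/(16|K|)$. The paper's route buys economy within the article: it reuses lemma \ref{lem:rhoK-K} and the explicit computations of \cite{BMO96}, and keeps every estimate expressed through the single quantity $\rho_K^2/|K|$, which also serves in Annex C. The final constants $\lambda_\star$ and $\lambda^\star$ come out identical either way, and are equally non-optimal, as you note.
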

\subsection{The hypotheses of theorem \ref{th:stabilite} are satisfied}
\label{sec:preuve_thm}

Let us finally prove  that the conditions
\eqref{eq:stab-cond-1}, \eqref{eq:stab-cond-2},  \eqref{eq:stab-cond-3} 
and \eqref{eq:stab-cond-4} of theorem \ref{th:stabilite} hold.
The proof relies on lemma \ref{lem:gram-RT}, lemma \ref{lem:gram-RT-star} and lemma \ref{lem:bound-delta_K}
involving the mesh independent constants
$\lambda_\star$, $\lambda^\star$, $\mu^\star$ and $\nu$.
In the following, 
$p$ denotes an element of $RT$ and $K$ a fixed mesh triangle.
It is recalled that on $K$,  
$ \di p=\sum_{i=1}^3 p_{K,i} \,\, \vphi_{K,i}$.

\bigskip   \noindent %%%   \paragraph*{Condition \eqref{eq:stab-cond-1}.}
{\bf Condition \eqref{eq:stab-cond-1}.}
Using the orthogonality property \eqref{eq:RT*}, and
relation \eqref{eq:coef-cotan} successively, leads to
\begin{displaymath}
  (\Pi p,p)_{0,K} = \sum_{i=1}^3 p_{K,i}^2 \, (\vphi^\star_{K,i},\vphi_{K,i})_{0,K}  
  =\frac{1}{2}\sum_{i=1}^3 p_{K,i}^2 \,
  \cotan \theta_{K,i}\geq\frac{1}{2}\cotan \theta^{\star} \sum_{1=1}^3 p_{K,i}^2.
\end{displaymath}
Lemma \ref{lem:gram-RT} gives a lower bound,
% \begin{displaymath}

\smallskip  \centerline { $ \displaystyle 
  (\Pi p,p)_{0,K} \geq \frac{\cotan \theta^\star}{ 2\lambda^\star}
  \Vert p\Vert^2_{0,K}.$}
% \end{displaymath}

\smallskip \noindent 
Summation over all $K\in\T^2$ gives \eqref{eq:stab-cond-1}
with,
% \begin{displaymath}

\smallskip  \centerline { $ \displaystyle 
  A=\frac{\cotan \theta^\star}{ 2\lambda^\star}=\frac{2}{5}\cotan \theta^\star\tan\theta_\star. $}
% \end{displaymath}

\bigskip   \noindent %%%   \paragraph*{Condition \eqref{eq:stab-cond-2}.}
{\bf Condition \eqref{eq:stab-cond-2}.}
Using successively lemma \ref{lem:gram-RT-star} and lemma \ref{lem:gram-RT} we get,
% \begin{displaymath}

\smallskip  \centerline { $ \displaystyle 
  \Vert \Pi p \Vert_{0,K}^2 \le 
  3 (\mu^\star)^2 \sum_{i=1}^3 p_{K,i}^2 
  \le \frac{3(\mu^\star)^2}{\lambda_\star} \Vert p \Vert_{0,K}^2. $}
% \end{displaymath}

\smallskip \noindent 
With the values of $\lambda_\star$  given in
lemma \ref{lem:gram-RT-star}  this implies \eqref{eq:stab-cond-2} with,

\smallskip \centerline { $ \displaystyle  %%%  \begin{displaymath} 
  B = \sqrt{\frac{3(\mu^\star)^2}{\lambda_\star}}=\frac{12}{ \tan \theta_\star}\mu^\star.  $}
\smallskip \noindent %  \end{displaymath}
\bigskip   \noindent %%%   \paragraph*{Condition \eqref{eq:stab-cond-3}.} 
{\bf Condition \eqref{eq:stab-cond-3}.}
Relation \eqref{eq:RT*-3.2} induces 
$(\div \Pi p, \div p)_{0,K} = \Vert \div p \Vert^2_{0,K}$ since $\div p$ is a constant on $K$,
and as a result inequality \eqref{eq:stab-cond-3} indeed is an equality with 
$$C=1.$$
%%% \\[5pt]
\bigskip   \noindent %%%   \paragraph*{Condition \eqref{eq:stab-cond-4}.}
{\bf Condition \eqref{eq:stab-cond-4}.}
With equation (\ref{eq:RT-div}) we get 
$\Vert  \div p \Vert ^2_{0,K}= \left(  \sum_{1=1}^3 p_{K,i} \right)^2/|K|$ 
and with  condition   \eqref{eq:phi-star-def-1.1}, 
$\div \Pi p = \delta_K(x) \sum_{1=1}^3 p_{K,i}$. 
Therefore we get,

\smallskip \centerline { $ \displaystyle  
  \Vert  \div \Pi p \Vert ^2_{0,K} = \int_K \delta_K^2 \dd x ~\left(  \sum_{1=1}^3 p_{K,i} \right)^2 = |K| \int_K \delta_K^2 \dd x ~\Vert  \div p \Vert ^2_{0,K}.  $}

\smallskip \noindent 
Condition \eqref{eq:stab-cond-4} follows from lemma \ref{lem:bound-delta_K},   with 

\smallskip \centerline { $ \displaystyle  
  D=\sqrt{\nu},\quad    
  \nu=\frac{8 ~3^5 ~23}{5}\,\,  {\frac{1}{\tan ^4\theta_\star}}.  $}

\smallskip \noindent 

\section*{Conclusion}
In this contribution we present a way to
define a local discrete gradient of a piecewise constant function on a triangular mesh. 
This discrete gradient is obtained from a Petrov-Galerkin formulation and belongs to the Raviart-Thomas function space of low order.
We have defined suitable
 dual test functions 
of the Raviart-Thomas basis functions.
For the  Poisson problem, we can interpret the   Petrov-Galerkin  formulation as a finite volume method.
Specific constraints for the  dual test functions enforce stability. 
Then the convergence can be established with the usual methods of mixed finite elements.
It would be interesting to try to extend this work in several directions:
the three-dimensional case,
the case of general diffusion problems and also the case of higher degree finite element methods.

%\appendix
\begin{appendices}
\renewcommand{\appendixname}{Annex}
% \def\appendixname{Annex}

%%%%%%%%%%%%%%%%%%%%%%%%%%%%%%%%%%%%%%%%%%%%%%%%%%%%%%%%%%%%%%%%% annex A
\section{Proof of theorem \ref{th:stabilite}}
\label{sec:annex-1}

In this section, we consider meshes $\T$
that satisfy
the angle conditions (\ref{eq:angle-cond})  
parametrised by the pair $ \, 0 < \theta_\star < \theta^\star < {\frac{\pi}{2}} \,$.
We suppose that the interpolation operator $\Pi$
defined in section 1 by
$\,  \Pi: RT \longrightarrow RT^\star \, $ with $\Pi \vphia=\vphias$
satisfies the following properties:
there exist four positive constants 
 $A$, $B$, $C$ and $D$ only depending on $\theta_\star$ and $\theta^\star$
such that for all $ q \in RT$
\begin{eqnarray}
  \label{hyp-01} 
  (q,\Pi q) &\,\geq\,& A \,\,  \Vert q\Vert_0^2 
  \,,  
  \\[5pt]
  \label{hyp-02} 
  \Vert \Pi q \Vert_{0}&\leq& B  \,  \Vert q\Vert_0 \, ,
  \\[5pt]
  \label{hyp-03} 
  (\div q, \div \Pi q)_0 &\geq& C  \,  \Vert \div  q\Vert_0^2 
  \, , 
  \\[5pt]
  \label{hyp-04}   %%%%   (2.5) 
  \Vert \div \Pi q\Vert_0 &\leq&  D \, \Vert \div  q\Vert_0 \, .
\end{eqnarray}
Let us first prove  the following proposition relative to the lifting of scalar fields.
\begin{proposition} [Divergence lifting of scalar fields]
  Under the previous hypotheses (\ref{hyp-01}), (\ref{hyp-02}), (\ref{hyp-03}) and (\ref{hyp-04}), 
  there exists some strictly
  positive constant $\, F \,$ that only depends of the 
  minimal and maximal angles $\theta_\star$ and $\theta^\star$
  such that for any mesh   $\, {\T} \,$ 
   and for any scalar field $\, u \,$ constant in each element $\, K $ of $\T$,  
   ($u \, \in P^0$), 
  there exists some vector field $\,\, q \, \in  RT^\star ,\,$ 
  such that 
  \moneq  \label{estimation-norme-hdiv} %%     (2.6)
  \parallel q \parallel_{H_{\div}} \,\,\, \leq \,\,\, F \, \parallel u \parallel _{0} 
  \monend 
  \moneq  \label{minoration-produit-scalaire} %%     (2.7)
  (\, u \,,\, {\rm div} \, q \, )_0 \quad \,  \geq \,\, \,\, \parallel u \parallel _{0} ^2\, \,. \, 
  \monend   
\end{proposition}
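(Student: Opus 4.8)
The plan is to build $q$ in three stages --- a continuous divergence lifting of $u$, its Raviart--Thomas interpolation, and a transfer into $RT^\star$ through the operator $\Pi$ --- with hypotheses \eqref{hyp-02}, \eqref{hyp-03} and \eqref{hyp-04} carrying the whole argument (note that \eqref{hyp-01} is not needed here). First I would lift $u$ at the continuous level: let $\psi\in\huo$ solve $-\Delta\psi=u$; since $\Omega$ is convex and $u\in\ld$, elliptic regularity yields $\psi\in{\rm H}^2(\Omega)$ with $\Vert\psi\Vert_2\le c_\Omega\Vert u\Vert_0$. Then $\tilde q=-\nabla\psi\in{\rm H}^1(\Omega)^2$ satisfies $\div\tilde q=u$ and $\Vert\tilde q\Vert_1\le c_\Omega\Vert u\Vert_0$, a bound that does not involve any mesh.

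Next I would project onto $RT$ by setting $q_{RT}=\Pi_{RT}\tilde q\in RT$. The commuting relation $\div\Pi_{RT}=\Pi_0\div$ together with $u\in P^0$ gives $\div q_{RT}=\Pi_0 u=u$ exactly. Combining the triangle inequality, the Raviart--Thomas interpolation error estimate whose constant is controlled by $1/\sin\theta^\star$ after \cite{acosta-duran-1999}, and the trivial bound $h_\T\le\mathrm{diam}(\Omega)$, I obtain $\Vert q_{RT}\Vert_0\le\kappa\Vert u\Vert_0$, hence $\Vert q_{RT}\Vert_{\Hdiv}\le\kappa'\Vert u\Vert_0$ with $\kappa,\kappa'$ independent of $\T$.

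Finally I would take $q=\tfrac{1}{C}\,\Pi q_{RT}\in RT^\star$, where $C$ is the constant of \eqref{hyp-03}. Using $\div q_{RT}=u$ and \eqref{hyp-03}, one gets $(u,\div q)_0=\tfrac{1}{C}(\div q_{RT},\div\Pi q_{RT})_0\ge\Vert u\Vert_0^2$, which is exactly \eqref{minoration-produit-scalaire}; the rescaling by $1/C$ is precisely what turns the constant of \eqref{hyp-03} into the required value $1$. For the norm, \eqref{hyp-02} gives $\Vert q\Vert_0\le\tfrac{B\kappa}{C}\Vert u\Vert_0$ and \eqref{hyp-04} gives $\Vert\div q\Vert_0\le\tfrac{D}{C}\Vert u\Vert_0$, so that $\Vert q\Vert_{\Hdiv}\le F\Vert u\Vert_0$ with $F=\tfrac{1}{C}\sqrt{B^2\kappa^2+D^2}$, which is \eqref{estimation-norme-hdiv}.

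The continuous lifting and the interpolation bound are routine; the delicate point I would watch is that every constant feeding into $F$ --- the regularity constant $c_\Omega$, the interpolation constant of \cite{acosta-duran-1999}, and $B,C,D$ --- is independent of the mesh and controlled through $\theta_\star,\theta^\star$ alone. This mesh-uniformity, rather than any single estimate, is the real content, since it is exactly what the Babu\v{s}ka theory requires downstream.
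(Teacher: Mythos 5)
Your proposal is correct and follows essentially the same route as the paper's own proof: a continuous lifting via the Poisson problem and ${\rm H}^2$ regularity on the convex domain, Raviart--Thomas interpolation of $\nabla\psi$ with the commuting property giving $\div q_{RT}=u$ exactly, and the rescaled transfer $q=\tfrac{1}{C}\Pi q_{RT}$ controlled by \eqref{hyp-02}, \eqref{hyp-03} and \eqref{hyp-04}, yielding the same constant $F=\tfrac{1}{C}\sqrt{B^2\kappa^2+D^2}$. Your explicit remark that \eqref{hyp-01} is never used also holds for the paper's argument.
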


\begin{proof}  
    Let $\,\, u  \in \, P^0 \, $ be a discrete scalar function
  supposed to be   constant in each triangle $\, K \,$ of the mesh $\,  {\T} . \,$
  Let $\, \psi \, \in {\rm H}^1_0(\Omega) \,$ be the variational solution of the Poisson
  problem 
  \moneq  \label{poisson} %%     (2.8)
  \Delta \, \psi \,\, = \,\, u \quad  {\rm in} \,\, \Omega \,\,,\quad 
  \psi \,=\, 0 \quad   {\rm on} \,\, \partial \Omega \,.\,
  \monend 
  Since $\, \Omega \,$ is convex, the solution $\, \psi \,$ of the problem (\ref{poisson}) belongs
  to the space $\, {\rm H}^2(\Omega) \,$ and there exists some constant $\, G > 0 \,$ that only
  depends on $\, \Omega \,$ such that 
  \begin{equation*}
     \parallel \psi \parallel_{2} \,\,\, \leq \,\,\, G \, \parallel u \parallel_{0}  \,. 
   \end{equation*}
  Then the field $\,\, \nabla \psi \,\,$ belongs to the space $\, {\rm H}^1(\Omega) \, \times
  \,  {\rm H}^1(\Omega) .\, $  It is in consequence possible to interpolate this field in a
  continuous way (see {\it e.g.}  Roberts and Thomas \cite{RT91})  in the space $\, 
  {\rm H}({\rm div},\, \Omega) \,\,$ with the help of the fluxes on the edges: 
   \begin{equation*}
    p_a \,\,= \,\, \int_{\displaystyle a} \, {\frac{\partial \psi}{\partial n_a}} \,
    {\rm d} \gamma \,,\qquad p\,=\, \sum_{a \, \in  \T^1}  \, p_a \,\,
    \varphi_a \, \,\, \in\,\, RT  \, . 
  \end{equation*}
  Then  there exists a constant $\, L > 0 \,$
  such that 
  \moneq  \label{controle-norme-hdiv} %%     (2.11) 
  \parallel p \parallel_{H_{\div}} \,\,\, \leq \,\,\,L \,\, \parallel
  u \parallel _{0} \,. 
  \monend 
  The two fields $\, {\rm div} \, p \,$ and $\, u \,$ are constant in each
  element $\, K \,$ of the mesh $\, {\T} .\,$  Moreover, we have:
  \moneqstar   
  \int_K \, {\rm div} \, p \,\,{\rm d}x \,\,= \,\, \int_{\partial K} \, p \, \cdot  \, n \,\,
  {\rm d}\gamma \,\,=\,\,  \int_{\partial K} \,{\frac{\partial \psi}{\partial n}} \,\,
  {\rm d}\gamma \,\,=\,\, \int_K \,\Delta \psi \,\, {\rm d}x \,\,= \,\, \int_K \, u
  \,\, {\rm d}x \,. 
  \monendstar 
  Then we have exactly,  
  $ \, {\rm div} \, p \, = \, u \,$ in $ \, \Omega \, $ 
  because this relation  is a consequence of the above property for the mean values. 
  \\
  Let now $\, \, \Pi \, p \,$ be the interpolate of $p$ in the ``dual space''  
  $ \, RT^\star \, $ and $q \,\,= \,\, \frac{1}{C} \, \Pi \, p $,
  \moneqstar   %  (2.13) 
  q \,\,= \,\, {\frac{1}{C}} \, \Pi \, p 
  \,\,= \,\, {\frac{1}{C}} \, \sum_{a \in \T^1} p_a \, \varphi_a^{\star} \quad {\rm with} \quad
  \Pi \, p  \,= \,  \sum_{a \in \T^1} p_a \, \varphi_a^{\star}  .\, 
  \monendstar 
  We have as a consequence of (\ref{hyp-03})  and $ \, {\rm div} \, p =  u $
  that,
  \moneqstar 
  ( \, u \,,\, {\rm div} \, q \,)_0 \,\,= \,\, {\frac{1}{C}} \, ( \, {\rm div} \, p \,,\,
  {\rm div} \,  \Pi \, p \,) \,\, \geq \,\, \parallel {\rm div} \, p \parallel_{0} ^2
  \,\,\,= \,\,\, \parallel u \parallel_{0} ^2 
  \monendstar 
  that establishes (\ref{minoration-produit-scalaire}).
  Moreover, we have due to equations \eqref{hyp-02}, \eqref{hyp-04} and \eqref{controle-norme-hdiv}:
  \begin{eqnarray*}
    \parallel q \parallel_{0} &\,=\,& {\frac{1}{C}} \,\parallel   \Pi \, p
  \parallel_{0} \, \leq \, {\frac{B}{C}} \,\parallel    p \parallel_{0}
  \, \leq \, {\frac{B L }{C}} \,\parallel  u \parallel_{0}\,,
  \\
  \parallel {\rm div} \, q \parallel_{0} &\, = \,& {\frac{1}{C}} \,\parallel
  {\rm div} \, \Pi \, p \parallel_{0} \, \leq \, {\frac{D}{C}} \,\parallel
  {\rm div} \, p \parallel_{0}  \, = \,  {\frac{D}{C}} \,\parallel  u
  \parallel_{0} \,.
  \end{eqnarray*}
  Then the two above inequalities establish 
  the estimate \eqref{estimation-norme-hdiv} with 
  $\,\, F \,=\,  {\frac{1}{C}}  \,\sqrt{ B^2 {\rm L}^2 + D^2} \,\,$ and
  the proposition is proven. 
\end{proof}
\subsubsection*{Proof of  theorem \ref{th:stabilite}}

\noindent 
We suppose that the dual
Raviart-Thomas basis satisfies the Hypothesis (\ref{hyp-01}) to (\ref{hyp-04}). 
We introduce the constant $ \, F > 0 \,$ such that (\ref{estimation-norme-hdiv}) 
and  (\ref{minoration-produit-scalaire}) are realised for some vector field
$ \, \widetilde{q} \in RT^{\star} \, $ for any $ \, u \in P^0 $:
\moneq  \label{q-tilda} 
\parallel \widetilde{q} \parallel_{H_{\div}} \,\,\, \leq \,\,\, F \, \parallel u \parallel _{0} 
\quad  {\rm and} \quad  
(\, u \,,\, {\rm div} \, \widetilde{q} \, )_0   \,\, \geq \,\, \parallel u \parallel _{0} ^2\, \,. 
\monend   
\monitem We set $\, \displaystyle a = {\frac{1}{2}} \big( \sqrt{4+F^2} - F \big) , \, $
$\, \displaystyle b = {\frac{A}{D + \sqrt{B^2 + D^2}}} \, $ with the constants $ \, F $, 
$\, A $, $\, B \, $ and $\, D \, $ introduced in (\ref{q-tilda}),  (\ref{hyp-01}), 
(\ref{hyp-02}) and (\ref{hyp-04}) respectively.
We shall prove that for
\moneq  \label{beta-infsup} 
\beta = {\frac{b \, a^2}{1 + 2 \, a \, b}} \, ,
\monend   
the inf-sup condition 
\moneq  \label{inf-sup-unif-2}  \left\{ \begin{array}{ll}
    \exists \, \beta   > \,0 \,,\,\,\, 
    \,,\,\, \forall \, \xi \, \in P^0\times RT 
    \,\, $ such that $ \displaystyle  \,\,
    \,\, \parallel \xi \parallel_{{\rm L}^2 \times {\rm H}_{\div}}  \,\,=\,\, 1 \,,\,\, \\
    \exists \, \eta \, \in  \, P^0\times RT^\star  
    \,,\,\,  \parallel \eta \parallel_{{\rm L}^2 \times {\rm H}_{\div}}  \,\,\leq\,\, 1 \,\,\, {\rm and} \,\,\, 
    Z(\xi,\,\eta) \,\, \geq \,\, \beta \, 
  \end{array} \right. \monend
is satisfied.
We set 
\moneq  \label{alpha-infsup} 
\alpha \equiv  a - \beta = a {\frac{1 + a \, b}{1 + 2 \, a \, b}} > 0 \, .
\monend   
Then we have after an elementary algebra:  $ \,\, \, a \, F + a^2 = 1 . \, $ In consequence, 
\moneq  \label{tecno-1-infsup} 
(\alpha + \beta) \, F + \alpha^2 + \beta^2 \,\leq \, 1 
\monend   
because 
$ \displaystyle \, (\alpha + \beta) \, F + \alpha^2 + \beta^2 \, 
\leq \,  (\alpha + \beta) \, F + (\alpha + \beta)^2 = 1 $. Moreover, 
\moneq  \label{tecno-2-infsup} 
\beta \leq b \, \alpha^2 
\monend   
thanks to the relations (\ref{beta-infsup}) and (\ref{alpha-infsup}):
\begin{equation*}
  \beta -  b \, \alpha^2  = {\frac{1}{(1 + 2 \, a \, b)^2}} \, \Big[ 
b\, a^2 \, (1 + 2 \, a \, b) - b\, a^2 \, (1 +  a \, b)^2  \Big] = - {\frac{a^4 \, b^3}{(1 + 2 \, a \, b)^2}}.
\end{equation*}
\monitem
Consider now 
$\,\,  \xi \equiv (u,\,p) \,\,$ satisfying the hypothesis of unity norm in the product space:
\moneq  \label{xi-norme-1} 
\parallel \xi   \parallel_{{\rm L}^2 \times {\rm H}_{\div}} \,\,\,\,\equiv \,\,\, \parallel u
\parallel\ib{0}^2 \,\,+\,\, \parallel p \parallel\ib{0}^2 \,\,+\,\, \parallel {\rm
  div}\,p  \parallel\ib{0}^2  \,\,\, =\,\,\, 1  \,.\,
\monend   
Then at last one of these terms is not too small and due to the three terms that arise
in relation (\ref{xi-norme-1}), the proof is divided into three parts. 

%%%%%%%%%%%%%%%%%%%%%%%%%%%%%%%%%%%%%%%%% cas 1 
\smallskip \noindent {\it (i)} 
If the  condition 
$ \displaystyle  \,\, 
\parallel {\rm div}\, p   \parallel\ib{0} \,\,  \geq \,\, \beta \,\,$ 
is satisfied, we set  
\moneqstar   
v \,\,= \,\, {\frac{{\rm div}\, p}{\parallel {\rm div}\, p  \parallel\ib{0}}} \, 
\,,\quad q \,\,=\,\, 0 \,\,,\qquad \eta \,\,=\,\, (v,\,q) \, . 
\monendstar 
Then,
$\,\, \, \displaystyle  \parallel {\rm div}\, v  \parallel\ib{0} \,=\, 1 \,\,
\,\,$ and   $\,\, \, \displaystyle  \parallel \eta  \parallel\ib{0} \,\leq \, 1
\,.\, $ Moreover
\moneqstar 
Z(\xi,\, \eta) \,=\, ( \, {\rm div} \, p \,,\, v \,)_0 \,\,= \,\, 
\parallel {\rm div}\, p  \parallel\ib{0} \,\, \geq \,\, \beta 
\monendstar 
and the relation (\ref{inf-sup-unif-2})  is satisfied in this particular case. 

%%%%%%%%%%%%%%%%%%%%%%%%%%%%%%%%%%%%%%%%% cas 2 
\smallskip \noindent {\it (ii)} 
If the  conditions 
$ \displaystyle  \,\, 
\parallel {\rm div}\, p   \parallel\ib{0} \,\,  \leq \,\, \beta \,\,$ and 
$ \displaystyle  
\parallel  p   \parallel\ib{0} \,\,  \geq \,\, \alpha \,\,$ 
are  satisfied, we set  
\moneqstar   
v \,\,= \,\, 0 \,\,\, \,,\quad q \,\,=\,\,  {\frac{1}{\sqrt{B^2+D^2}}} \,\, \Pi \, p 
\,\,,\qquad \eta \,\,=\,\, (v,\,q) \,. 
\monendstar 
We check that $\Vert  \eta \Vert _{{\rm L}^2 \times {\rm H}_{\div}} \le 1$:
\begin{eqnarray*}
\Vert  \eta \Vert _{{\rm L}^2 \times {\rm H}_{\div}}^2 &\,=\,& \parallel q   \parallel\ib{0}^2 
+ \parallel {\rm div} \, q  \parallel\ib{0}^2  \,\, \leq \,\, 
{\frac{1}{B^2+D^2}} \, \Big( B^2 \, \parallel p   \parallel\ib{0}^2 
+ D^2 \,  \parallel {\rm div} \, p  \parallel\ib{0}^2  \Big) 
\\[5pt]  
&\,\leq \,&    \parallel p   \parallel\ib{0}^2 +  \parallel {\rm div} \, p  \parallel\ib{0}^2
\, \leq \, \Vert  \xi \Vert _{{\rm L}^2 \times {\rm H}_{\div}}^2 \,\, = \,\, 1 .
\end{eqnarray*}
Then
\begin{equation*}
Z(\xi, \, \eta) = (p, \, q)_0 + ( u , \, {\rm div} \,q )_0  
\geq   {\frac{1}{\sqrt{B^2+D^2}}} \,  \Big( (p, \, \Pi \, p )_0 - \parallel u  \parallel\ib{0} \, 
\parallel {\rm div}\, \Pi \, p   \parallel\ib{0}  \Big) \,.  
\end{equation*}
Moreover $ \, \displaystyle \parallel u  \parallel\ib{0} \leq 1 $, then\\
\begin{equation*}
Z(\xi, \, \eta) \geq   {\frac{1}{\sqrt{B^2+D^2}}} \, \Big( A  \parallel p  \parallel\ib{0}^2 \,-\,  D \, 
\parallel {\rm div}\, p   \parallel\ib{0}  \Big) 
\geq   {\frac{1}{\sqrt{B^2+D^2}}} \, \Big( A  \parallel p  \parallel\ib{0}^2 \,-\,  D \, \beta  \Big)  
\,\, \geq  \,\, \beta   
\end{equation*}
because the inequality 
$ \,\,  \displaystyle \big( D + \sqrt{B^2+D^2} \big) \beta \, \leq \, A \, \alpha^2 \,\, $ 
is exactly the inequality (\ref{tecno-2-infsup}). 
Then the relation (\ref{inf-sup-unif-2})  is satisfied in this second case.

%%%%%%%%%%%%%%%%%%%%%%%%%%%%%%%%%%%%%%%%% cas 3 
\smallskip \noindent {\it (iii)} 
If the  last conditions 
$ \displaystyle  \,\, 
\parallel {\rm div}\, p   \parallel\ib{0} \, \leq \, \beta \,\,$ and 
$ \displaystyle  
\parallel  p   \parallel\ib{0} \, \leq \, \alpha \,$ 
are  satisfied, we first remark that the first component $ \, u \, $ has a 
norm bounded below: 
from (\ref{tecno-1-infsup}),
\moneqstar   
0 \, < \, a \, F \, = \, (\alpha + \beta) \, F \, \leq \, 1 - \alpha^2 - \beta^2 
\leq \, 1 - \parallel  p   \parallel\ib{0}^2 - \parallel {\rm div}\, p   \parallel\ib{0}^2 
\,\, = \,\,  \parallel u  \parallel\ib{0}^2 \, . 
\monendstar 
Then we set,  
\moneqstar   
v \,\,= \,\, 0 \,\,\, \,,\quad q \,\,=\,\,  {\frac{1}{F}} \,\, \widetilde{q}  
\,\,,\qquad \eta \,\,=\,\, (v,\,q) \, ,
\monendstar 
with a discrete vector field $ \, \widetilde{q} \,$ satisfying the inequalities 
(\ref{q-tilda}). Then,
\begin{eqnarray*}
Z(\xi, \, \eta) &=& ( u , \, {\rm div} \,q )_0  + (p, \, q)_0 
= {\frac{1}{F}} \, \big( ( u , \, {\rm div} \, \widetilde{q} )_0  + 
(p, \,  \widetilde{q})_0 \big) 
\\[5pt]
&\geq&   {\frac{1}{F}} \,  \parallel u  \parallel\ib{0}^2  \,-\,
{\frac{1}{F}} \, \parallel  p   \parallel\ib{0} \, \parallel  \widetilde{q} \parallel_{H_{\div}}  
\\[5pt]
&\geq&   {\frac{1}{F}} \,  \parallel u  \parallel\ib{0}^2  \,-\,
\alpha \,  \parallel u  \parallel\ib{0}  \,\quad \text{ due to }\quad (\ref{q-tilda})
\\[5pt]
&\geq&  \beta,
\end{eqnarray*}
because, due to (\ref{tecno-1-infsup}) we have the following  inequalities:
\moneqstar   
\parallel u  \parallel\ib{0} \, \alpha + \beta \,\, \leq \,\, \alpha + \beta 
\,\, \leq \,\,   {\frac{1}{F}} \, \big( 1 - \alpha^2 - \beta^2 \big) 
\,\, \leq \,\,   {\frac{1}{F}} \,\parallel u  \parallel\ib{0}^2 \, . 
\monendstar 
Then the relation (\ref{inf-sup-unif-2})  is satisfied in this third  case
and the proof is completed. 
$\hfill $   $\blacksquare$

% 
%% 
%%%%%%%%%%%%%%%%%%%%%%%%%%%%%%%%%%%%%%%%%%%%%%%%%%%%%%%%%%%%%%%%% annex B 
\section{Proof of lemma \ref{lem:gram-RT}} 
We first recall the statement of lemma \ref{lem:gram-RT}.
\begin{lemme-4}
  For $p\in RT$ and $K\in \T^2$:
  \begin{displaymath}    
    \lambda_\star \sum_{i=1}^3 p_{K,i}^2\le
    \Vert  p \Vert_{0,K}^2
    \le \lambda^\star \sum_{i=1}^3 p_{K,i}^2, 
  \end{displaymath}
  for two constants $\lambda_\star$ and $\lambda^\star$ only depending on $\theta_{\star}$ 
  in (\ref{eq:angle-cond0}),
  \begin{displaymath}    
    \lambda_\star=   \frac{\tan ^2\theta_{\star}}{48},\quad
    \lambda^\star=\frac{5}{4 \tan \theta_\star}. 
  \end{displaymath}
\end{lemme-4}

\noindent 
The following technical result will be necessary for the proof of lemma \ref{lem:gram-RT}.
\begin{lemma}
  \label{lem:rhoK-K}
  The gyration radius of a triangle $K$ is defined as,
  $\rho_K^2=\frac{1}{|K|}\int_K|X-G|^2$,
  with $G$ the barycentre of the triangle $K$.
  It satisfies,
  \begin{displaymath}    
    \frac{1}{6} \,\leq \, \frac{\rho_K^2}{|K|} \,\leq \, {\frac{1}{3\, \tan \theta_{\star}}}. 
  \end{displaymath}    
\end{lemma}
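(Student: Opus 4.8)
The plan is to reduce the two-sided estimate to a single trigonometric identity for the area-normalised second moment and then read off both bounds directly from the angle assumption.

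First I would establish the classical moment-of-inertia formula
\[
\rho_K^2 \;=\; \frac{1}{36}\sum_{i=1}^3 |a_{K,i}|^2 ,
\]
which rewrites $\rho_K^2/|K|$ purely in terms of the three edge lengths of $K$. This can be obtained from the affine parametrisation $X = W_{K,1} + s\,(W_{K,2}-W_{K,1}) + t\,(W_{K,3}-W_{K,1})$, whose Jacobian equals $2|K|$, followed by integrating $|X-G|^2$ over the reference simplex $\{s,t\ge 0,\; s+t\le 1\}$ (equivalently, by a parallel-axis reduction of the moments about the vertices). I expect this bookkeeping to be the only genuinely computational step; once it is in place the rest is elementary.

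Next I would pass from edge lengths to angles. Combining the law of cosines with the area identity $2|K| = |a_{K,j}|\,|a_{K,k}|\,\sin\theta_{K,i}$ gives $\cotan\theta_{K,i} = \bigl(|a_{K,j}|^2+|a_{K,k}|^2-|a_{K,i}|^2\bigr)/(4|K|)$, and summing over the three angles yields $\sum_{i=1}^3 |a_{K,i}|^2 = 4|K|\sum_{i=1}^3 \cotan\theta_{K,i}$. Substituting into the moment formula produces the key relation
\[
\frac{\rho_K^2}{|K|} \;=\; \frac{1}{9}\sum_{i=1}^3 \cotan\theta_{K,i},
\]
which is in the same cotangent spirit as the coefficients in \eqref{eq:coef-cotan}.

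The two bounds then follow from the angle assumption \eqref{eq:angle-cond0}--\eqref{eq:angle-cond}. For the upper bound, each angle lies in $[\theta_\star,\theta^\star]$ with $\theta^\star<\pi/2$, so $\cotan$ is positive and decreasing and $\cotan\theta_{K,i}\le\cotan\theta_\star$; hence $\rho_K^2/|K|\le \frac{1}{3}\cotan\theta_\star = 1/(3\tan\theta_\star)$. For the lower bound, all angles lie in $(0,\pi/2)$, where $\cotan$ is convex, and they sum to $\pi$; Jensen's inequality then gives $\frac{1}{3}\sum_{i}\cotan\theta_{K,i}\ge\cotan(\pi/3)=1/\sqrt3$, so $\rho_K^2/|K|\ge \sqrt3/9 > 1/6$. (Alternatively, the lower bound follows from the triangle identity $\sum_{i<j}\cotan\theta_{K,i}\cotan\theta_{K,j}=1$ together with $(\sum_{i}\cotan\theta_{K,i})^2\ge 3\sum_{i<j}\cotan\theta_{K,i}\cotan\theta_{K,j}=3$.) Since $\sqrt3/9>1/6$, the stated estimate holds, the lower bound being comfortably non-sharp.
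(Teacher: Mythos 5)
Your proof is correct, but it follows a genuinely different route from the paper's. Both start from the moment-of-inertia identity $36\,\rho_K^2=\sum_{i=1}^3|a_{K,i}|^2$, but after that the paper stays entirely with edge lengths: for the lower bound it uses $|K|\le \frac12 |A_iA_j|\,|A_iA_k|\le\frac14\bigl(|A_iA_j|^2+|A_iA_k|^2\bigr)$ and sums over the three pairs, and for the upper bound it uses the geometric inequality $|K|\ge \frac14 |a_i|^2\tan\theta_\star$ and sums over the three edges. You instead pivot through the cotangent identity $\rho_K^2/|K|=\frac19\sum_i\cotan\theta_{K,i}$ — which is precisely equation \eqref{cotan-rho}, a fact the paper attributes to Baranger--Maitre--Oudin and only invokes later in Annex B, not in the proof of this lemma — and then conclude by monotonicity of $\cotan$ for the upper bound and by Jensen (or the identity $\sum_{i<j}\cotan\theta_{K,i}\cotan\theta_{K,j}=1$) for the lower bound. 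Each approach buys something: yours yields the sharper lower bound $\sqrt3/9>1/6$ (attained by the equilateral triangle) and unifies the lemma with \eqref{cotan-rho}; the paper's lower-bound argument is valid for an arbitrary triangle with no angle hypothesis, whereas your Jensen step needs all angles below $\pi/2$ (convexity of $\cotan$ fails on $(\pi/2,\pi)$) — a restriction that is harmless under the paper's angle assumption \eqref{eq:angle-cond}, and which your alternative argument via the product identity removes anyway, since $\sum_i\cotan\theta_{K,i}=\sigma_2/(4|K|)>0$ for every triangle.
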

\begin{proof}
  Let $A_i$ and $a_i$, $1$=1, 2, 3, be respectively
  the three vertices and edges of the triangle $K$.
  One can check that:
  $
  36 \, \rho_K^2 =\sum_{i=1}^3|A_iA_{i+1}|^2 = \sum_{i=1}^3|a_i|^2.
  $

  On the one hand,
  $|K|\leq \frac{1}{2}|A_iA_j||A_iA_k|\leq\frac{1}{4}\big( |A_iA_j|^2+|A_iA_k|^2\big)$ for any
  $1\leq i, j, k \leq 3$ and $i\neq j$, $i\neq k$  and $k\neq j$.
  Then $3|K|\leq \frac{1}{2}\sum_{i=1}^3|A_iA_{i+1}|^2 = 18 \, \rho_K^2$,
  that gives the lower bound.

  On the other hand,
  using the definition of the tangent, 
  $|K|\geq \frac{1}{4} |a_i|^2\tan \theta_\star $, for $1\le i\le 3$.
  Then
  $
  3|K|\geq \frac{1}{4}\tan \theta_\star\sum_{1=1}^3|a_i|^2
  = 9 \bigl(\tan \theta_\star \bigl)\rho_K^2,
  $
  that gives the upper bound.
\end{proof}
%%% 
%% 
%% 
%% 
\subsubsection*{Proof of lemma \ref{lem:gram-RT}}
For a triangle $K$, the local $RT$ mass matrix is $G_K := \left[ (\vphi_{K,i}, \vphi_{K,j})_{0,K}\right]_{1\le i,j\le 3}$.
Explicit computation obtained by Baranger-Maitre-Oudin in \cite{BMO96}
gives some properties on the gyration radius:
\begin{equation}
  \label{cotan-rho}
  \sum_{i=1}^3 \cotan \theta_i=9 \frac{\rho_K^2}{|K|}
\end{equation}
where  $\theta_i$ are the angles of the triangle $K$
and lead to information on the Raviart-Thomas basis as follows:
\begin{equation}
  \label{eq:norme_phi}
  \begin{array}{ll}    
    \di\Vert\vphi_{K,i}\Vert_{0,K}^2=\frac{1}{6}\cotan \theta_i+\frac{3}{4}\frac{\rho_K^2}{|K|}
    \;\\ 
    \di (\vphi_{K,i},\vphi_{K,j})_{0,K}=\frac{1}{4}\frac{\rho_K^2}{|K|}
    -\frac{1}{9}\Bigl(\cotan \theta_i+\cotan \theta_j-\frac{\cotan \theta_k}{2}\Bigl)
    =-\frac{3}{4}\frac{\rho_K^2}{|K|}+\frac{\cotan \theta_k}{6}
  \end{array}
\end{equation}
where $k$ is the third index of the triangle $K$ ($k\neq i,\, j$, $1\leq i,\,j\,,k\leq3$).
\\ \indent
\textbf{Derivation of $\lambda^\star$}.
The triangle $K\in \T^2$ is fixed and $p\in RT$ rewrites
$p=\di\sum_{i=1}^3p_{K,i}\vphi_{K,i}$ on $K$.
One can easily prove that,% for any $K\in \T^1$,
$$\Vert  p \Vert^2_{0,K} \leq \tr(G_K)\sum_{i=1}^3p_{K,i}^2
,\; \text{ where}\;
\tr(G_K)=\sum_{i=1}^3 \Vert \vphi_{K,i}\Vert ^2_{0,K}
\text{ is the trace of } G_K
.$$
With the properties \eqref{cotan-rho} and \eqref{eq:norme_phi}, $\tr(G_K)=\frac{15}{4|K|}\rho_K^2$.
This leads to the value of $\lambda^\star$ thanks to lemma \ref{lem:rhoK-K}.
\\

\textbf{Derivation of $\lambda_\star$}.
In order to compute $\lambda_\star$, we want to  find a lower bound
for the smallest 
eigenvalues of 
the Gram matrix $G_K$ . 
The characteristic polynomial is given by
$$P(\lambda)=- \det(\lambda I-G_K)= -[\lambda^3-\tr(G_K)\lambda^2+R\lambda -\det G_K]$$
where $R:=\sum_{i=1}^3R_i$ with 
$R_i:=\Vert\vphi_i\Vert_0^2\Vert\vphi_{i+1}\Vert_0^2-(\vphi_i,\vphi_{i+1})_{0,K}^2$
with the usual notation if $i=3$, $ \vphi_{i+1}=\vphi_1$. 
Since $P(\lambda)$ is of degree 3 with positive roots, 
the smallest root $\lambda_{\star}$ is such that
$\lambda_{\star}\geq \frac{\det(G_K)}{R}$.
As $G_K$ is a Gram matrix, the determinant of $G_K$
is the square of the volume of polytope generated by the basis function:
$$\det(G_K)={\rm vol}(\vphi_1,\vphi_2,\vphi_3)^2.$$
We expand each basis function on the orthogonal basis made of the three
vector fields:
$\overrightarrow{i},\, \overrightarrow{j},\,x-G$.
Then the volume can be computed via a 3 by 3 elementary determinant.
This leads  to
$$\det(G_K)=\frac{\rho_K^2}{16|K|}.$$
The explicit computation of
$R_i$ %=|\vphi_i|_0^2|\vphi_{i+1}|_0^2-(\vphi_i,\vphi_{i+1})_{0,K}^2$ gives
with help of \eqref{eq:norme_phi} leads to
$$R_i=\frac{1}{36}\cotan\theta_i\cotan \theta_{i+1}+
\frac{1}{8}(\cotan\theta_i+\cotan\theta_{i+1})\frac{\rho_K^2}{|K|}
-\frac{\cotan^2\theta_{i+2}}{36}+\frac{\cotan\theta_{i+2}}{4}\frac{\rho_K^2}{|K|}.$$
Using the geometric property that
$\di\sum_{i=1}^3 \cotan\theta_i \,\cotan\theta_{i+1}=1$ and the previous property
\eqref{cotan-rho}
the summation gives
$$R= \sum_{i=1}^3 R_i=\frac{1}{12}+\frac{9}{4}\frac{\rho_K^4}{|K|^2}.$$
Then using lemma \ref{lem:rhoK-K} we get
$\di R \leq \,  \frac{1}{4\, \tan ^2\theta_{\star}} + \frac{1}{12} $
and, one can conclude that

\qquad \qquad \qquad \qquad 
$ \displaystyle  
\lambda_{\star}  \, \geq \, \frac{\tan^2 \theta_{\star}}{8 \, ( \tan ^2\theta_{\star} + 3) }  
\, \geq \,\frac{\tan ^2\theta_{\star}}{48 } \, $ since $\theta_{\star}\leq \frac{\pi}{3}$. 
\hfill
$\blacksquare$

%% 
%% 
%%%%%%%%%%%%%%%%%%%%%%%%%%%%%%%%%%%%%%%%%%%%%%%%%%%%%%%%%%%%%%%%% annex C
\section{Proof of  lemma \ref{lem:bound-delta_K}}
\label{sec:annex-2}
We express the function $\delta_K$ as a linear combination of the functions
$\un_K$ and  $|x-W_{K,i}|^2$, for $1\le i\le 3 $.
Thanks to the conditions \eqref{eq:def-delta_K}, we solve formally
a 4 by 4 linear system (with the help of a formal calculus software)
in order to explicit the components.
We can then compute the integral $I$ given by,
$$I= |K|\int_K \delta_K^2 \,\, {\rm d}x.$$
The result is a symmetric function of the length $\vert a_i\vert $ of the three edges of the triangle $K$. It is a ratio of two homogeneous polynomials of degree 12. More precisely $I$ reads,
\begin{displaymath}
  \displaystyle I \,=\, \frac{1}{128} \, {\frac{N}{  |K|^4 \, D}}, 
\end{displaymath}
where $N$ and $D$ respectively are homogeneous polynomials of degree 12 and 4.
The exact expressions of $D$ and $N$ are,
\begin{align}
  \label{integrale}
  D &= \frac{7}{4} \, \sigma_4 \, - 
  \,  \frac{1}{2} \, \Sigma_{2 , 2 , 0},
  \\\label{annexC:def-N}
  N &= 9 \, \sigma_{12} - 15 \, \Sigma_{10 , 2 , 0}  + 15 \, \Sigma_{8 , 4 , 0} 
  - 33 \,  \Sigma_{8 , 2 , 2} - 18 \,\Sigma_{6 , 6 , 0} + 48 \, \Sigma_{6 , 4 , 2}
  + 558 \, \varpi^4 ,
\end{align}
with the following definitions,
\moneqstar 
\Sigma_{n, m, p} \,\equiv \, \sum_{i \not=  j \not= k}   \,  |a_i|^n  \,\, |a_j|^m  \,\, |a_k|^p  \,,\,
\qquad \varpi  \,\equiv \, |a_1|  \,  |a_2|  \,  |a_3| \,=\,  \Sigma_{1,1,1} \,,
\monendstar 
and where $ \, \sigma_p \,$ is the sum of  of the three edges length  $\vert  a_j \vert $ to the power $p$:
\begin{displaymath}
  \sigma_p \, \equiv \, \sum_{j=1}^3 \, |a_j|^p  \, .  
\end{displaymath}
The lemma \ref{lem:bound-delta_K} states an upper bound of $I$.
To prove it, we  look for an upper bound of $N$ and a lower bound of $D$.
\\
The denominator $D$ in (\ref{integrale}) is the 
difference of two positive expressions. 
We  remark that, 
\moneqstar
\sigma_2^2 = \big( a_1^2 + a_2^2 + a_3^2 \big)^2 = \sigma_4 + 2 \,  \Sigma_{2 , 2 , 0}. 
\monendstar 
We have on the one hand,
\moneq 
\sigma_4 = \sigma_2^2   - 2 \,  \Sigma_{2 , 2 , 0} \,   ,
\label{sigma4}  \monend
and on the other hand $ \,\, a_i^2 \,\, a_j^2 \, \leq \, \frac{1}{2} \big( a_i^4 + a_j^4 \big)$.
Then by summation
\moneq 
\Sigma_{2 , 2 , 0} \, \leq \, \sigma_4 \, . 
\label{sigma220}  \monend
In the expression of $D$ in (\ref{integrale}), we split the term relative to 
$ \, \sigma_4 \,$ into two parts: 
\moneqstar
D = \alpha \, \sigma_4 + \beta \, \sigma_4  -   \frac{1}{2} \, \Sigma_{2 , 2 , 0}  \,, \quad {\rm with} \quad
\alpha  + \beta = \frac{7}{4} \,. 
\monendstar
Then thanks to (\ref{sigma4}), 

\smallskip \noindent  \qquad 
$  D = \alpha \, \big(  \sigma_2^2   - 2 \,  \Sigma_{2 , 2 , 0} \big)  + \beta \, \sigma_4 
-   \frac{1}{2} \, \Sigma_{2 , 2 , 0} \,=\, \alpha \, \sigma_2^2   + \beta \, \sigma_4  
-  \big( 2 \, \alpha +  \frac{1}{2} \big) \, \Sigma_{2 , 2 , 0}  $

\smallskip \noindent  \qquad \quad 
$ \geq \,\, \alpha \, \sigma_2^2 
+  \big[ \beta - \big( 2 \, \alpha +  \frac{1}{2} \big) \big] \, \Sigma_{2 , 2 , 0}  \,  $
\qquad   due to (\ref{sigma220}).

\smallskip  \noindent
We force the relation $ \, \beta - \big( 2 \, \alpha +  \frac{1}{2} \big) = 0 .\,$ 
Then $ \, 3 \, \beta =  \frac{7}{2} + \frac{1}{2} = 4 \, $ and $ \, \alpha =  \frac{7}{4} - \frac{4}{3}  
= \frac{5}{12} > 0  $.  We deduce  the lower bound,
\moneq 
D \, \geq \, \frac{5}{12} \, \sigma_2^2 \, . 
\label{minor-D}  \monend
We give now an upper bound of the numerator $N$ given in (\ref{annexC:def-N}).
We remark  that the expression $ \, \sigma_2^3 \equiv  \big( a_1^2 + a_2^2 + a_3^2 \big)^3 \,  $ 
contains 27 terms.
After an elementary calculus we obtain, 
\moneq 
\sigma_2^3  = \sigma_6 + 3 \, \Sigma_{4 , 2 , 0} + 6 \, \varpi^2 \, . 
\label{sigma2-p3}  \monend
In an analogous way, 
\moneq 
\sigma_4^3  = \sigma_{12} + 3 \, \Sigma_{8 , 4 , 0} + 6 \, \varpi^4 \, . 
\label{sigma4-p3}  \monend

\smallskip \noindent We can now bound the numerator $N$:

\smallskip \noindent \qquad  
$ N \, \leq \, 9 \, \sigma_{12}  + 15 \, \Sigma_{8 , 4 , 0}  + 48 \,   \Sigma_{6 , 4 , 2}
+ 558 \, \varpi^4 $ 

\smallskip \noindent \qquad \quad  $ \,\,\,=\,
4  \, \sigma_{12} + 5 \, \big(  \sigma_{12}  + 3 \, \Sigma_{8 , 4 , 0} + 6 \, \varpi^4 \big) 
+ 48 \,  \varpi^2 \,   \Sigma_{4 , 2 , 0} +  528 \, \varpi^4 $

\smallskip \noindent \qquad \quad  $ \,\,\,=\,
4  \, \sigma_{12} + 5 \, \sigma_4^3 + 16 \, \varpi^2 \, \big( 3 \, \Sigma_{4 , 2 , 0} + 6 \, \varpi^2 \big) 
+ 432 \, \varpi^4 $ \hfill due to (\ref{sigma4-p3}) \qquad ~

\smallskip \noindent \qquad \quad  $ \,\,\, \leq \,
4  \, \sigma_{12} + 5 \, \sigma_4^3 + 16 \, \varpi^2 \,  \sigma_2^3  + 24  \, \varpi^4 + 408 \, \varpi^4 $
\hfill due to (\ref{sigma2-p3}) \qquad ~

\smallskip \noindent \qquad \quad  $ \,\,\, \leq \,
4  \, ( \sigma_{12} + 6 \,  \varpi^4 ) 
+ 5 \, \sigma_4^3 + \frac{16}{6}  \,  \sigma_2^6    + 408 \, \varpi^4 $
\hfill due to (\ref{sigma2-p3}) \qquad ~

\smallskip \noindent \qquad \quad  $ \,\,\, \leq \,
9 \, \sigma_4^3 + \frac{16}{6}  \,  \sigma_2^6  +  \frac{408}{36}  \,  \sigma_2^6   $ 
\hfill due to (\ref{sigma4}) \qquad ~

\smallskip \noindent \qquad \quad  $ \,\,\, \leq 
\,\, \big( 9 +  \frac{8}{3} +  \frac{34}{3} \big) \, \sigma_2^6   $ 
\hfill due to (\ref{sigma4-p3}) \qquad ~

\smallskip \noindent and finally,
\moneq 
N \, \leq \, 23  \, \sigma_2^6  \, . 
\label{majo-N}  \monend
We observe that the  upper bound (\ref{majo-N})  is clearly not optimal!
We then combine the definition (\ref{integrale}) and inequalities 
(\ref{minor-D})  and (\ref{majo-N}): 
\moneqstar
I \, \leq \,  
\frac{1}{128} \, {\frac{ 23 \, \sigma_2^6}{{\frac{5}{12}} \, \sigma_2^2}} \,  {\frac{1}{  |K|^4 }} 
\,\, \leq \,\, {\frac{ 3 \,.\, 23}{5 \,.\, 32}} \, \Big( {\frac{\sigma_2}{  |K| }} \Big)^4   .\,\, 
\monendstar 
We use that   $
36 \, \rho_K^2 = \sum_{i=1}^3|a_i|^2 =\sigma_2
$ and the lemma \ref{lem:rhoK-K} to get,
\begin{displaymath}
  \dfrac{\sigma_2}{\vert  K\vert } \le
  \dfrac{12}{\tan \theta_\star}.
\end{displaymath}
It follows that
$   \, \,  \displaystyle I \, \leq \,  {\frac{ 3 \,.\, 23 \,.\,12^4}{5 \,.\, 2\,.\,4^2}} \, 
\Bigl(  {\frac{1}{\tan \theta_{\star}}} \Bigl)^4$, so 
ending the proof of lemma \ref{lem:bound-delta_K}.
\hfill $\blacksquare$

\end{appendices}

%%%%%%%%%%%%%% Biblio %%%%%%%%%%%%%%%%%%5

\bibliographystyle{plain}
\bibliography{biblio}

\begin{thebibliography}{10}

\bibitem{aavatsmark-2002}
I.~Aavatsmark.
\newblock An introduction to multipoint flux approximations for quadrilateral
  grids.
\newblock {\em Computational Geosciences}, 6(3-4):405--432, 2002.

\bibitem{aavatsmark-2007}
I.~Aavatsmark, G.~T. Eigestad, R.~A. Klausen, M.~F. Wheeler, and I.~Yotov.
\newblock Convergence of a symmetric mpfa method on quadrilateral grids.
\newblock {\em Computational geosciences}, 11(4):333--345, 2007.

\bibitem{acosta-duran-1999}
G.~Acosta and R.~G. Dur{\'a}n.
\newblock The maximum angle condition for mixed and nonconforming elements:
  Application to the stokes equations.
\newblock {\em SIAM Journal on Numerical Analysis}, 37(1):18--36, 1999.

\bibitem{ADN59}
S.~Agmon, A.~Douglis, and L.~Nirenberg.
\newblock Estimates near the boundary for solutions of elliptic partial
  differential equations satisfying general boundary conditions. 1.
\newblock {\em Commu. Pure Appl. Math.}, 12(4):623--727, 1959.

\bibitem{arbogast-1997}
T.~Arbogast, M.~F. Wheeler, and I.~Yotov.
\newblock Mixed finite elements for elliptic problems with tensor coefficients
  as cell-centered finite differences.
\newblock {\em SIAM Journal on Numerical Analysis}, 34(2):828--852, 1997.

\bibitem{Ba71}
I.~Babu{\v{s}}ka.
\newblock Error-bounds for finite element method.
\newblock {\em Numerische Mathematik}, 16:322--333, 1971.

\bibitem{BMO96}
J.~Baranger, J.-F. Maitre, and F.~Oudin.
\newblock Connection between finite volume and mixed finite element methods.
\newblock {\em RAIRO Mod\'el. Math. Anal. Num\'er.}, 30(4):445--465, 1996.

\bibitem{BDLPT05}
S.~Borel, F.~Dubois, C.~Le~Potier, and M.~M. Tekitek.
\newblock Boundary conditions for {P}etrov-{G}alerkin finite volumes.
\newblock {\em Finite volumes for complex applications {IV}}, pages 305--314,
  2005.

\bibitem{Brezzi74}
F.~Brezzi.
\newblock On the existence, uniqueness and approximation of saddle-point
  problems arising from lagrangian multipliers.
\newblock {\em ESAIM: Mathematical Modelling and Numerical Analysis},
  8(R2):129--151, 1974.

\bibitem{chavent-2003}
G.~Chavent, A.~Youn\`es, and P.~Ackerer.
\newblock On the finite volume reformulation of the mixed finite element method
  for elliptic and parabolic {PDE} on triangles.
\newblock {\em Comput. Methods Appl. Mech. Engrg.}, 192(5-6):655--682, 2003.

\bibitem{Ciarlet}
P.-G. Ciarlet.
\newblock {\em The finite element method for elliptic problems}, volume~4 of
  {\em Studies in Mathematics and Applications}.
\newblock North Holland, Amsterdam, 1978.

\bibitem{CVV}
Y.~Coudi{\`e}re, J.-P. Vila, and P.~Villedieu.
\newblock Convergence rate of a finite volume scheme for a two-dimensional
  convection-diffusion problem.
\newblock {\em M2AN Math. Model. Numer. Anal.}, 33(3):493--516, 1999.

\bibitem{Dom-Omnes}
K.~Domelevo and P.~Omnes.
\newblock A finite volume method for the {L}aplace equation on almost arbitrary
  two-dimensional grids.
\newblock {\em M2AN Math. Model. Numer. Anal.}, 39(6):1203--1249, 2005.

\bibitem{D00}
F.~Dubois.
\newblock Finite volumes and mixed {P}etrov-{G}alerkin finite elements: the
  unidimensional problem.
\newblock {\em Numer. Methods Partial Differential Equations}, 16(3):335--360,
  2000.

\bibitem{D02}
F.~Dubois.
\newblock Petrov-{G}alerkin finite volumes.
\newblock {\em Finite volumes for complex applications, {III} ({P}orquerolles,
  2002)}, pages 203--210, 2002.

\bibitem{Du10}
F.~Dubois.
\newblock {Dual Raviart-Thomas mixed finite elements (2002)}.
\newblock {\em ArXiv.org}, arxiv.org/ abs/1012.1691, 2010.

\bibitem{gallouet}
R.~Eymard, T.~Gallou{\"e}t, and R.~Herbin.
\newblock {\em Finite volume methods}.
\newblock Handb. Numer. Anal., VII. North-Holland, Amsterdam, 2000.

\bibitem{Fa92}
I.~Faille.
\newblock A control volume method to solve an elliptic equation on a
  two-dimensional irregular mesh.
\newblock {\em Comput. Methods Appl. Mech. Engrg.}, 100(2):275--290, 1992.

\bibitem{FGH91}
I.~Faille, T.~Gallou{\"e}t, and R.~Herbin.
\newblock Des math\'ematiciens d\'ecouvrent les volumes finis.
\newblock {\em Matapli}, 28:37--48, octobre 1991.

\bibitem{FdVeubeke65}
B.~Fraeijs~de Veubeke.
\newblock {\em Displacement and equilibrium models in the finite element
  method}.
\newblock J. Wiley \& Holister, 1965.
\newblock Symposium Numerical Methods in Elasticity, University College of
  Swansea.

\bibitem{Godunov79}
S.~Godounov, A.~Zabrodin, M.~Ivanov, A.~Kraiko, and G.~Prokopov.
\newblock {\em R\'esolution num\'erique des probl\`emes multidimensionnels de
  la dynamique des gaz}.
\newblock ``Mir'', Moscow, 1979.
\newblock Translated from the Russian by Val\'eri Platonov.

\bibitem{Herbin95}
R.~Herbin.
\newblock An error estimate for a finite volume scheme for a
  diffusion-convection problem on a triangular mesh.
\newblock {\em Numer. Methods Partial Differential Equations}, 11(2):165--173,
  1995.

\bibitem{Hermeline}
F.~Hermeline.
\newblock Une m\'ethode de volumes finis pour les \'equations elliptiques du
  second ordre.
\newblock {\em C. R. Acad. Sci. Paris S\'er. I Math.}, 326(12):1433--1436,
  1998.

\bibitem{Kershaw81}
D.~S. Kershaw.
\newblock Differencing of the diffusion equation in lagrangian hydrodynamic
  codes.
\newblock {\em Journal of Computational Physics}, 39(2):375--395, 1981.

\bibitem{lady69}
O.~A. Ladyzhenskaya.
\newblock {\em The Mathematical Theory of Viscous Incompressible Flow}.
\newblock Mathematics and Its Applications, 2 (Revised Second ed.), New York -
  London -Paris - Montreux Tokyo -Melbourne: Gordon and Breach, pp. XVIII+224,
  1969.

\bibitem{Noh}
W.-F. Noh.
\newblock {\em {CEL}: {A} time-dependent, two-space-dimensional, coupled
  {E}uler-{L}agrange code}.
\newblock Advances in Research and Applications. Academic Press, New York and
  London, 1964.

\bibitem{Patankar80}
S.~V. Patankar.
\newblock {\em Numerical Heat Transfer and Fluid Flow}.
\newblock Series in computational methods in mechanics and thermal, 1980.

\bibitem{payne-weinberger1960}
L.~E. Payne and H.~F. Weinberger.
\newblock An optimal {P}oincar{\'e} inequality for convex domains.
\newblock {\em Archive for Rational Mechanics and Analysis}, 5(1):286--292,
  1960.

\bibitem{Pert81}
G.~J. Pert.
\newblock Physical constraints in numerical calculations of diffusion.
\newblock {\em Journal of Computational Physics}, 42(1):20 -- 52, 1981.

\bibitem{Petrov}
G.~I. Petrov.
\newblock Application of {G}alerkin's method to the problem of stability of
  flow of a viscous fluid.
\newblock {\em J. Appl. Math. Mech.}, 4:3--12, 1940.

\bibitem{RT77}
P.-A. Raviart and J.-M. Thomas.
\newblock A mixed finite element method for 2nd order elliptic problems.
\newblock {\em Mathematical aspects of finite element methods}, pages 292--315.
  Lecture Notes in Math., Vol. 606, 1977.

\bibitem{RT-book}
P.-A. Raviart and J.-M. Thomas.
\newblock {\em Introduction \`a l'analyse num\'erique des \'equations aux
  d\'eriv\'ees partielles}.
\newblock Math\'ematiques Appliqu\'ees pour la Ma\^\i trise. Masson, Paris,
  1983.

\bibitem{Rivas82}
A.~Rivas.
\newblock Be03, programme de calcul tridimensionnel de la transmission de
  chaleur et de l'ablation.
\newblock {\em Rapport Aerospatiale Les Mureaux}, 1982.

\bibitem{RT91}
J.~E. Roberts and J.-M. Thomas.
\newblock {\em Mixed and Hybrid Methods}.
\newblock Elsevier Science Publishers, Amsterdam, 1991.

\bibitem{TT2}
J.-M. Thomas and D.~Trujillo.
\newblock Finite volume methods for elliptic problems: convergence on
  unstructured meshes.
\newblock {\em Numerical methods in mechanics ({C}oncepci\'on, 1995)},
  371:163--174, 1997.

\bibitem{TT1}
J.-M. Thomas and D.~Trujillo.
\newblock Mixed finite volume methods.
\newblock {\em Internat. J. Numer. Methods Engrg.}, 46(9):1351--1366, 1999.
\newblock Fourth World Congress on Computational Mechanics (Buenos Aires,
  1998).

\bibitem{vohralik-2006}
M.~Vohral{\'\i}k.
\newblock Equivalence between lowest-order mixed finite element and multi-point
  finite volume methods on simplicial meshes.
\newblock {\em ESAIM: Mathematical Modelling and Numerical Analysis},
  40(2):367--391, 2006.

\bibitem{vohralik-wohlmuth-2013}
M.~Vohral{\'\i}k and B.~I. Wohlmuth.
\newblock Mixed finite element methods: implementation with one unknown per
  element, local flux expressions, positivity, polygonal meshes, and relations
  to other methods.
\newblock {\em Math. Models Methods Appl. Sci}, 23(5):803--838, 2013.

\bibitem{Voronoi1908}
G.~Voronoi.
\newblock Nouvelles applications des param\`etres continus \`a la th\'eorie des
  formes quadratiques.
\newblock {\em Journal f\"ur die Reine und Angewandte Mathematik}, 133:97--
  178, 1908.

\bibitem{wheeler-2006}
M.~F. Wheeler and I.~Yotov.
\newblock A multipoint flux mixed finite element method.
\newblock {\em SIAM Journal on Numerical Analysis}, 44(5):2082--2106, 2006.

\bibitem{younes-2004}
A.~Youn{\`e}s, P.~Ackerer, and G.~Chavent.
\newblock From mixed finite elements to finite volumes for elliptic {PDE}s in
  two and three dimensions.
\newblock {\em Internat. J. Numer. Methods Engrg.}, 59(3):365--388, 2004.

\bibitem{Younes-Chavent-1999}
A.~Youn{\`e}s, R.~Mos{\'e}, P.~Ackerer, and G.~Chavent.
\newblock A new formulation of the mixed finite element method for solving
  elliptic and parabolic {PDE} with triangular elements.
\newblock {\em Journal of Computational Physics}, 149(1):148--167, 1999.

\end{thebibliography}

\end{document}